\title[NS correspondence for Real and Quaternionic vector bundles]{On the Narasimhan-Seshadri correspondence for Real and Quaternionic vector bundles}
\author{Florent Schaffhauser}
\address{Departamento de Matem\'aticas, Universidad de Los Andes, Bogot\'a, Colombia.}
\email{florent@uniandes.edu.co}
\thanks{The author was supported in part by the European Research Council, under the {\em European Community}'s seventh Framework Programme (FP7/2007-2013)/ERC {\em grant agreement} No.FP7-246918, during the course of this work.}
\newcommand{\fu}{\mathfrak{u}}
\newcommand{\C}{\mathbb{C}}
\newcommand{\R}{\mathbb{R}}
\newcommand{\Z}{\mathbb{Z}}
\renewcommand{\H}{\mathbb{H}}
\newcommand{\rk}{\mathrm{rk}}
\renewcommand{\mod}{\mathrm{mod}\,}
\newcommand{\Fix}{\mathrm{Fix}}
\newcommand{\Gal}{\mathrm{Gal}}
\newcommand{\Br}{\mathrm{Br}}
\newcommand{\Id}{\mathrm{Id}}
\newcommand{\Aut}{\mathrm{Aut}}
\newcommand{\Hom}{\mathrm{Hom}}
\newcommand{\gr}{\mathrm{gr}}
\newcommand{\vol}{\mathrm{vol}}
\newcommand{\Out}{\mathrm{Out}}
\newcommand{\Int}{\mathrm{Int}}
\newcommand{\End}{\mathrm{End}}
\newcommand{\Hol}{\mathrm{Hol}}
\newcommand{\cE}{\mathcal{E}}
\newcommand{\cF}{\mathcal{F}}
\newcommand{\cC}{\mathcal{C}}
\newcommand{\cEs}{\sigma(\mathcal{E})}
\newcommand{\cG}{\mathcal{G}}
\newcommand{\cA}{\mathcal{A}}
\newcommand{\cL}{\mathcal{L}}
\newcommand{\cB}{\mathcal{T}}
\newcommand{\cM}{\mathcal{M}}
\newcommand{\cR}{\mathcal{R}}
\newcommand{\cT}{\mathcal{T}}
\newcommand{\cZ}{\mathcal{Z}}
\newcommand{\cW}{\mathcal{W}}
\newcommand{\U}{\mathbf{U}}
\newcommand{\PU}{\mathbf{PU}}
\renewcommand{\O}{\mathbf{O}}
\newcommand{\Sp}{\mathbf{Sp}}
\newcommand{\GL}{\mathbf{GL}}
\renewcommand{\phi}{\varphi}
\newcommand{\si}{\sigma}
\newcommand{\sit}{\widetilde{\sigma}}
\newcommand{\lra}{\longrightarrow}
\newcommand{\lmt}{\longmapsto}
\newcommand{\Msi}{M_\Sigma}
\newcommand{\piRx}{\pi_1(\Msi;x)}
\newcommand{\piCx}{\pi_1(M;x)}
\newcommand{\piR}{\pi_1(\Msi)}
\newcommand{\piC}{\pi_1(M)}
\newcommand{\Si}{\Sigma}
\newcommand{\piL}{\pi_1(S(L))}
\newcommand{\piLR}{\pi_1(S(L)_\Si)}
\newcommand{\Mt}{\widetilde{M}}
\newcommand{\Om}{\Omega}
\newcommand{\RP}{\R\mathbf{P}}
\newcommand{\usit}{u_{\sit}}
\newcommand{\tausit}{\widetilde{\tau}}
\newcommand{\tausi}{\tau}
\newcommand{\eps}{\varepsilon}
\newcommand{\tauL}{\tau_L}
\newcommand{\etat}{\widetilde{\eta}}
\newcommand{\vw}{\vec{w}}
\newcommand{\ModRd}{\mathcal{M}_{\R}(r,d)}
\newcommand{\ModRdw}{\mathcal{M}_{\R}(r,d,\vw)}
\newcommand{\ModRdws}{\mathcal{N}_{\R}(r,d,\vw)}
\newcommand{\ModRds}{\mathcal{N}_{\R}(r,d)}
\newcommand{\ModHd}{\mathcal{M}_{\H}(r,d)}
\newcommand{\ModCd}{\mathcal{M}_{\C}(r,d)}
\newcommand{\ModCds}{\mathcal{N}_{\C}(r,d)}
\newcommand{\ModHds}{\mathcal{N}_{\H}(r,d)}
\newcommand{\Holx}{\mathrm{Hol}_x(A)}
\newcommand{\HolxSi}{\mathrm{Hol}_x^{\Sigma}(A)}
\newcommand{\at}{\widetilde{\alpha}}
\newcommand{\fibre}{F^{-1}\left(\{i\frac{d}{r}\ \vol_M\ \Id_E\}\right)}
\newcommand{\ga}{\gamma}
\newcommand{\Ga}{\Gamma}
\newcommand{\la}{\lambda}
\renewcommand{\rho}{\varrho}
\newcommand{\phis}{\varphi_\si}
\newcommand{\quot}{/\negmedspace/}
\newcommand{\ov}[1]{\overline{#1}}
\newcommand{\os}[1]{\ov{(\si^{-1})^*#1}}
\newtheorem{proposition}{Proposition}[section]
\newtheorem{theorem}[proposition]{Theorem}
\newtheorem{corollary}[proposition]{Corollary}
\newtheorem{lemma}[proposition]{Lemma}
\theoremstyle{definition}
\newtheorem{definition}[proposition]{Definition}
\newtheorem*{ack}{Acknowledgments}
\newtheorem{remark}[proposition]{Remark}
\subjclass[2000]{14H60}
\keywords{Vector bundles on curves and their moduli}
\date{\today}
\numberwithin{equation}{section}
\begin{document}

\begin{abstract}
Let $(M,\si)$ be a compact Klein surface of genus $g\geq 2$ and let $E$ be a smooth Hermitian vector bundle on $M$. Let $\tau$ be a Real or Quaternionic structure on $E$ and denote respectively  by $\cG_\C^\tau$ and $\cG_E^{\,\tau}$ the groups of complex linear and unitary automorphisms of $E$ that commute to $\tau$. In this paper, we study the action of $\cG_\C^\tau$ on the space $\cA_E^{\,\tau}$ of $\tau$-compatible unitary connections on $E$ and show that the closure of a semi-stable $\cG_\C^\tau$-orbit contains a unique $\cG_E^{\,\tau}$-orbit of projectively flat connections. We then use this invariant-theoretic perspective to prove a version of the Narasimhan-Seshadri correspondence in this context: $S$-equivalence classes of semi-stable Real and Quaternionic vector bundes are in bijective correspondence with equivalence classes of certain appropriate representations of orbifold fundamental groups of Real Seifert manifolds over the Klein surface $(M,\si)$.
\end{abstract}

\maketitle

\tableofcontents

\section{Introduction}

\subsection{Background on Klein surfaces}

In this paper, a Klein surface (or Real Riemann surface) is a pair $(M,\si)$ where $M$ is a Riemann surface and $\si:M\lra M$ is an anti-holomorphic involution (or Real structure). A homomorphism $f:(M_1,\si_1) \lra (M_2,\si_2)$ between two Klein surfaces is a holomorphic map $f:M_1\lra M_2$ such that $f\circ \si_1 = \si_2\circ f$. We shall always assume that $M$ is connected. It is convenient to think of $\si$ as the non-trivial element of the group $\Si:=\Gal(\C/\R)\simeq \Z/2\Z$, whose trivial element will be denoted by $1_\Si$ or sometimes simply by $1$. In particular, the group $\Si$ acts on $M$ and we will denote indifferently by $M/\si$ or $M/\Si$ the orbit space of that action, endowed with the quotient topology. Likewise, the fixed-point set of the $\Si$-action on $M$ will be denoted by $M^\si$ or $M^\Si$. The topological classification of compact Klein surfaces (up to $\Si$-equivariant homeomorphism) is well-known and depends upon three numbers:
\begin{itemize}
\item the genus $g$ of $M$,
\item the number $n\in\{0;\ldots;g+1\}$ of connected components of $M^\si$,
\item the number $a\in\{0;1\}$ which is non-zero if and only if the surface $M/\si$ is non-orientable.
\end{itemize} The inequality $n\leq g+1$ is called Harnack's inequality and the invariants $(g,n,a)$ are subject to the following conditions:
\begin{itemize}
\item $n=0 \Rightarrow a=1$,
\item $n=(g+1) \Rightarrow a=0$,
\item $a=0\Rightarrow n \equiv (g+1) \mod 2$.
\end{itemize}

\noindent The triple $(g,n,a)$ will be called the topological type of the Klein surface $(M,\si)$.

\subsection{Real and Quaternionic vector bundles}\label{background_vb}

Atiyah initiated the study of Real vector bundles over general Real spaces in \cite{Atiyah_reality}. Dupont then introduced Symplectic vector bundles in \cite{Dupont}, which nowadays are more commonly called Quaternionic vector bundles (\cite{Hartshorne_CMP}). Over a Klein suface, the definition goes as follows.

\begin{definition}\label{def_real_and_quat_bundles}
Let $(M,\si)$ be a Klein surface. A Real vector bundle over $(M,\si)$ is a pair $(\cE,\tau)$ where $\cE\lra M$ is a holomorphic vector bundle and $\tau:\cE\lra \cE$ is an anti-holomorphic map such that:
\begin{enumerate}
\item The following diagram commutes: $$\begin{CD} \cE @>{\tau}>> \cE \\
@VVV @VVV \\
M @>{\si}>> M.
\end{CD}$$
\item $\forall v\in\cE$, $\forall \la\in\C$, $\tau(\la v) = \ov{\la}\tau(v)$.
\item $\tau^2=\Id_{\cE}$.
\end{enumerate}

A homomorphism between two Real vector bundles $(\cE_1,\tau_1)$ and $(\cE_2,\tau_2)$ is a homomorphism of holomorphic vector bundles $\phi:\cE_1\lra\cE_2$ satisfying the additional condition $\phi\circ\tau_1=\tau_2\circ\phi$.

A Quaternionic vector bundle over $(M,\si)$ is a pair $(\cE,\tau)$ as above but satisfying $\tau^2=-\Id_{\cE}$ instead of $\tau^2=\Id_{\cE}$. Homomorphisms of Quaternionic vector bundles are defined as in the Real case.
\end{definition}

The stability condition for Real vector bundles was introduced in \cite{Wang_NS} then extended to the Quaternionic case in \cite{Sch_JSG}. Moduli spaces of semi-stable Real and Quaternionic vector bundles were constructed using gauge theory in \cite{BHH} and \cite{Sch_JSG}, where they were related to the Real points of moduli spaces of semi-stable holomorphic vector bundles, but the first instance of such a construction actually goes back to Wang (\cite{Wang_moduli}). In the course of the paper, we will use the following notation: $\ModCd$ is the space of $S$-equivalence classes of semi-stable holomorphic vector bundles of rank $r$ and degree $d$ (\cite{Seshadri}) and we denote by $\ModRd$ (resp.\ $\ModHd$) the space of $S$-equivalence classes of semi-stable Real (resp.\ Quaternionic) vector bundles of rank $r$ and degree $d$ (see Definition \ref{def_moduli_spaces_Real_Quat_bdles}). Likewise, we denote by $\ModCds$ the space of isomorphism classes of stable holomorphic vector bundles of rank $r$ and degree $d$ (\cite{Mumford_ICM}) and by $\ModRds$ (resp.\ $\ModHds$) the space of isomorphism classes of geometrically stable Real (resp.\ Quaternionic) vector bundles of rank $r$ and degree $d$ (see Definition \ref{def_stability_cond} for the notion of geometric stability). There is an action $\cE\lmt \ov{(\si^{-1})^*\cE}$ of $\Si=\Gal(\C/\R)$ on $\ModCd$, preserving the subset $\ModCds$ and such that the fixed point-set of the induced action is 
\begin{equation}\label{Glois_inv_geom_stable_bdles}
\ModCds^\Si \ \simeq \ \ModRds \ \sqcup \ \ModHds
\end{equation} (see Propositions \ref{self_conj_simple_bundle} and \ref{moduli_of_geom_stable_Real_and_quat}). We point out that, depending on the topological type $(g,n,a)$ of the Klein surface $(M,\si)$, as well as of the particular values of $r$ and $d$, either $\ModRds$ or $\ModHds$, or even both of them, might be empty: the proof of this is given in \cite{Sch_JSG} and uses the following topological classification result of Biswas, Huisman and Hurtubise in \cite{BHH} (see also \cite{KW} for Part (1)). We formulate it for real and Quaternionic Hermitian vector bundles $(E,\tau)$, where $\tau$ is now an isometry of the smooth Hermitian vector bundle $E$, otherwise satisfying the same three conditions as in Definition \ref{def_real_and_quat_bundles}. Note that, if $M^\si\neq\emptyset$ and $(E,\tau)$ is Real, then $E^\tau\lra M^\si$ is a real vector bundle in the ordinary sense (with fiber $\R^r$) over a disjoint union of $n$ circles, so $E^\tau$ is determined up to isomorphism by its first Stiefel-Whitney class $\vw=w_1(E^\tau) \in H^1(M^\si;\Z/2\Z)\simeq (\Z/2\Z)^n.$ If $n>0$, we set $\vw=(s_1,\ldots,s_n)\in (\Z/2\Z)^n$ and $|\vw| = s_1+\ldots+s_n$ and, if $n=0$, we set $\vw=|\vw|=0$. Also, we denote by $r$ the rank of a complex vector bundle $E$ over $M$ and by $d$ its degree.

\begin{theorem}[\cite{BHH}]\label{top_classif}
Let $(M,\si)$ be a compact Klein surface of topological type $(g,n,a)$.
\begin{enumerate}
\item Two Real Hermitian vector bundles $(E,\tau)$ and $(E',\tau')$ are isomorphic if and only if $(r,d,\vw)=(r',d',\vw')$. The triple $(r,d,\vw)$ is called the topological type of $(E,\tau)$. A necessary and sufficient condition for a Real Hermitian vector bundle of topological type $(r,d,\vw)$ to exist is that $|\vw|=d\,\mod\,2$. In particular, if $M^\si=\emptyset$, the degree of a Real vector bundle must be even and we simply write $(r,d)$ for $(r,d,0)$. If $M^\si\neq\emptyset$, there are $2^{n-1}$ possible topological types of Real vector bundles of rank $r$ and degree $d$.
\item Two Quaternionic Hermitian vector bundles $(E,\tau)$ and $(E',\tau')$ are isomorphic if and only if $(r,d)=(r',d')$. The pair $(r,d)$ is called the topological type of $(E,\tau)$. A necessary and sufficient condition for a Quaternionic Hermitian vector bundle of topological type $(r,d)$ to exist is that $d+r(g-1)\equiv 0\ (\mod\, 2)$. If $M^\si\neq\emptyset$, the rank of a Quaternionic vector bundle must be even so the previous condition implies that its degree, too, must be even.
\end{enumerate} 
\end{theorem}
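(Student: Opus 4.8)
The plan is to treat a Real or Quaternionic Hermitian bundle $(E,\tau)$ as a $\Si$-equivariant object over $(M,\si)$ and to reduce everything to low-dimensional obstruction theory on the quotient surface $\ov M:=M/\si$, which is legitimate because $M$ is $2$-dimensional. That the listed data are isomorphism invariants is immediate: $r=\rk E$ and $d=\deg E$ are invariants of the underlying complex bundle, on which every Real/Quaternionic isomorphism restricts to a complex-linear bundle isomorphism, while $\vw=w_1(E^{\tau})$ is the first Stiefel-Whitney class of the real bundle $E^{\tau}\lra M^\si$, which any isomorphism of Real bundles carries to the corresponding bundle of $(E',\tau')$. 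The content is therefore (i) completeness of these invariants, (ii) the existence constraints, and (iii) the count $2^{n-1}$.

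For completeness I would use the topological description of $(M,\si)$ recorded above. When $M^\si\neq\emptyset$ the involution is a reflection, $\ov M$ is a compact surface whose boundary $\partial\ov M$ is the disjoint union of the $n$ circles of $M^\si$, and $M$ is the double of $\ov M$ along $\partial\ov M$, with $\si$ exchanging the two halves; when $M^\si=\emptyset$ the action is free and $\ov M$ is a closed (non-orientable, since then $a=1$) surface double-covered by $M$. In the reflection case a Real bundle is equivalent to the datum of a complex Hermitian bundle $\ov E\lra\ov M$ together with a real form of $\ov E|_{\partial\ov M}$ (the other half being the conjugate bundle, glued through $\tau|_{M^\si}$); since $\ov M$ retracts onto a wedge of circles, $\ov E$ is determined by $r$ alone, the real form on each boundary circle is classified by its class in $H^1(C_i;\Z/2\Z)\simeq\Z/2\Z$, assembling to $\vw$, and the remaining freedom is a relative Chern class in $H^2(\ov M,\partial\ov M;\Z)\simeq\Z$ which, together with $\vw$, determines the degree $d$ of the doubled bundle. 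A cell-by-cell obstruction argument then shows these data determine the pair $(\ov E,\mathrm{real\ form})$, hence $(E,\tau)$, up to isomorphism. In the Quaternionic case the same scheme applies, except that over each point of $M^\si$ the fibre carries a conjugate-linear involution squaring to $-\Id$, i.e.\ a quaternionic structure, which forces $r$ to be even and leaves no Stiefel-Whitney invariant.

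For the constraints I would argue by reduction to line bundles. In the Real case, pick a generic Real section of $\det E$: its zeros off $M^\si$ occur in $\si$-conjugate pairs and contribute evenly to $d$, while its zeros on each fixed circle $C_i$ are the zeros of a generic section of the real line bundle $(\det E)^{\tau}|_{C_i}$, whose parity is exactly $w_1\big((\det E)^{\tau}|_{C_i}\big)=s_i$; summing gives $d\equiv|\vw|\pmod 2$, and in particular $d$ is even when $M^\si=\emptyset$. In the Quaternionic case one checks that $\Lambda^r\tau$ makes $\det E$ a Real line bundle when $r$ is even and a Quaternionic line bundle when $r$ is odd, so the parity of $d$ is controlled by line bundles: a Real line bundle contributes an even degree, whereas a Quaternionic line bundle can exist only when $M^\si=\emptyset$, where it corresponds to a $(-1)$-twisted equivariant line bundle on $\ov M$ whose degree parity is shifted from the Real (untwisted) case by $\chi(\ov M)\equiv(g-1)\pmod 2$. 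Tracking these parities through $\det E$ yields $d+r(g-1)\equiv 0\pmod 2$.

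I expect the genuine obstacle to be this last parity computation in the Quaternionic, fixed-point-free case: unlike the Real constraint, which is a transparent count of real zeros, the appearance of the genus reflects the interaction between the $(-1)$-twisting of the equivariant structure and the Euler characteristic of the non-orientable quotient $\ov M$, and making it precise requires a careful Wu-formula and characteristic-class bookkeeping on $\ov M$ rather than a direct section count. Once the constraints are in place, existence is settled by exhibiting bundles realizing every admissible type---Real bundles as sums of Real line bundles built from $\si$-invariant divisors (conjugate pairs to adjust $d$, real points to adjust $\vw$), and Quaternionic bundles from standard quaternionic line and rank-two models---while the count is immediate: among the $2^n$ possible $\vw\in(\Z/2\Z)^n$ exactly half satisfy $|\vw|\equiv d\pmod 2$, giving $2^{n-1}$ topological types.
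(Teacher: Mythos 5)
The paper does not actually prove Theorem \ref{top_classif}: it is imported wholesale from \cite{BHH} (and \cite{KW} for Part (1)), and the only proof-relevant ingredient the paper records is Kahn's classification of smooth Real line bundles by $c_1^{\Si}(L,\tauL)\in H^2_\Si(M;\underline{\Z})$ in Section \ref{construction_of_bundles_section}. Your overall strategy (split off the determinant, classify equivariant line bundles, track the boundary data) is the standard one, and your section-counting argument for $|\vw|\equiv d \mod 2$ is correct. But as written there are two genuine gaps.

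First, your completeness argument rests on the claim that $M$ is the double of $\ov M=M/\si$ along $\partial\ov M$ with $\si$ exchanging the two halves. This holds only when $a=0$. When $a=1$ and $n>0$ (e.g.\ type $(2,1,1)$), $\ov M$ is non-orientable with boundary, $M\setminus M^\si$ is \emph{connected}, and $M$ is not two copies of $\ov M$ glued along the boundary, so the reduction to ``a bundle on one half plus a real form on the boundary'' does not apply; the case $M^\si=\emptyset$ receives no completeness argument at all. Even in the $a=0$ case, treating the boundary real forms and the relative Chern class in $H^2(\ov M,\partial\ov M;\Z)$ as independent invariants overcounts: the relative class is only defined modulo regluing by loops of real gauge transformations on $\partial\ov M$, and analysing that indeterminacy (via $\pi_1$ of the real versus complex gauge groups on a circle) is precisely where the single constraint $|\vw|\equiv d\mod 2$ and the count $2^{n-1}$ come from. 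Second, and more seriously, the one genuinely non-obvious assertion of Part (2) --- that a Quaternionic line bundle over a fixed-point-free Klein surface has degree congruent to $g-1$ modulo $2$, whence $d+r(g-1)\equiv 0$ --- is exactly the step you yourself flag as ``the genuine obstacle'' and defer to an unspecified Wu-formula computation. Since necessity in Part (2) reduces entirely to this parity (your determinant reduction is fine), and sufficiency/existence is only asserted, Part (2) remains essentially unproved in your write-up.
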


\noindent In what follows, it will be convenient to write $(r,d,\vw)$ for the topological type of a Real or Quaternionic vector bundle $(E,\tau)$ and to interpret $\vw$ as $0$ and $(r,d,\vw)$ as $(r,d)$ when $M^\si=\emptyset$ or $\tau^2=-\Id_E$.

A first consequence of Theorem \ref{top_classif} is that, in the Real case, it is relevant to introduce moduli spaces $\ModRdw$ (resp.\ $\ModRdws$), consisting of $S$-equivalence (resp.\ isomorphism) classes of semi-stable (resp.\ geometrically stable) Real vector bundles of topological type $(r,d,\vw)$. In particular, one has the following refinement of \eqref{Glois_inv_geom_stable_bdles}: $$\quad\ModCds^\Si \quad\simeq\quad \Big(\bigsqcup_{\vw\, |\, |\vw|=d\,\mod\,2} \ModRdws\quad \Big) \quad \bigsqcup \quad \ModHds\ ,$$ the point being that now each of the moduli spaces $\ModRdws$ and $\ModHds$ is connected (see \cite{Sch_JSG} for a proof and a precise count of the connected components of $\ModCds^\Si$ according to the possible values of $(g,n,a)$ and $(r,d)$). For each topological type of Real or Quaternionic vector bundle, one can fix a Real or Quaternionic Hermitian vector bundle $(E,\tau)$ of that type and consider, instead of $\ModRdw$ or $\ModHd$, the space $\cM^{ss}(E,\tau)$ of $S$-equivalence classes of $\tau$-compatible semi-stable holomorphic structures on it. This point of view is recalled in detail in Section \ref{moduli_of_real_and_quat_bundles} (see in particular the gauge-theorectic construction of $\cM^{ss}(E,\tau)$ in \eqref{GIT_pic_Real_quat_case}) and exploited throughout the paper to prove the Narasimhan-Seshadri correspondence (Theorem \ref{NS_over_R}). The space $\cM^{ss}(E,\tau)$ is connected (\cite{BHH}).

\subsection{The Narasimhan-Seshadri correspondence}

\subsubsection{Position of the problem} The classical approach to understanding holomorphic vector bundles on a compact Riemann surface $M$ can be summarized as follows. Any holomorphic vector bundle over $M$ admits a (unique) Harder-Narasimhan filtration (\cite{HN}) so is, in a canonical way, a successive extension of finitely many semi-stable vector bundles (with strictly increasing slopes). Subsequently, any semi-stable vector bundle $\cE$ is a successive extension of finitely many stable vector bundles of equal slope. Although the resulting filtration of $\cE$ is not unique, the associated graded object does not depend on the choice of that filtration and is, by definition, a poly-stable vector bundle. Finally, by the Narasimhan-Seshadri correspondence (\cite{NS}), a stable vector bundle of rank $r$ and degree $d$ comes, in a sense, from an irreducible $\U(r)$-representation of a certain discrete group $\Ga_d$ which is a central extension of $\piCx$ by $\Z$ (see \cite{Furuta-Steer} or Section \ref{construction_of_bundles_section} for a geometric description of $\Ga_d$ as the fundamental group of the Seifert manifold $S(L)$ over $M$, where $L$ is a smooth line bundle of degree $d$). More precisely, there is a diffeomorphism $\ModCds\simeq \cR^{\mathrm{irr}}(r,d)$, where $\cR^{\mathrm{irr}}(r,d)$ is the subset of the usual representation space $\cR(r,d):=\Hom^{\Z}(\Ga_d;\U(r))/\U(r)$ consisting of conjugacy classes of irreducible representations, as well as a homeomorphism $\ModCd\simeq\cR(r,d)$ (see Remark \ref{usual_rep_var} for the definition of $\cR(r,d)$). One way to construct that homeomorphism is given as follows. If $\rho:\Ga_d\lra \U(r)$ is a group homomorphism, then there is a diagonal action of $\Ga_d$ on $\Mt\times\C^r$, where $\Mt$ is the universal cover of $M$, $\Ga_d$ acts on $\Mt$ through the projection $\Ga_d\lra\piC$ defining  the central extension $\Ga_d$ and on $\C^r$ via the unitary representation $\rho$. The quotient $\cE_{\rho}:=(\Mt\times\C^r)/\Ga_d$ of that action is a poly-stable vector bundle of rank $r$ and degree $d$. The map $\rho\lmt\cE_{\rho}$ thus defined will be called the Narasimhan-Seshadri map. The Narasimhan-Seshadri theorem may then be formulated as follows (\cite{Don_NS}): a holomorphic vector bundle $\cE$ of rank $r$ and degree $d$ is poly-stable if and only if the corresponding complex orbit of unitary connections that it defines on an arbitrary smooth Hermitian vector bundle of rank $r$ and degree $d$ contains a projectively flat unitary connection. Such a connection is moreover unique up to unitary gauge and taking its holonomy representation induces a group homomorphism $\rho:\Ga_d\lra \U(r)$ of the appropriate type (i.e.\ lying in $\cR(r,d)$), thus providing an inverse to the Narasimhan-Seshadri map above. Now, we know from \cite{Sch_JSG} and \cite{LS} that Real (resp.\ Quaternionic) vector bundles are, in a unique way, successive extensions of semi-stable Real (resp.\ Quaternionic) vector bundles of increasing slopes and that semi-stable Real (resp.\ Quaternionic) vector bundles are extensions of finitely many stable Real (resp.\ Quaternionic) vector bundles of equal slope. So it is natural question to look for an interpretation of stable Real (resp.\ Quaternionic) vector bundles in terms of representations of a certain discrete group $\Ga_d(\Si)$. When the problem is formulated like this, finding the appropriate representation space is not an easy task, essentially because stable Real or Quaternionic vector bundles may not be geometrically stable (Proposition \ref{charac_of_stability}) and in general we cannot find filtrations of semi-stable Real and Quaternionic vector bundles whose successive quotients are geometrically stable. It is therefore better to focus either on geometrically stable Real and Quaternionic vector bundles or on poly-stable ones, as being poly-stable and Real (resp.\ Quaternionic) is equivalent to being poly-stable as a Real (resp.\ Quaternionic) bundle (Proposition \ref{ps_and_tau_comp}). We note that it is of importance for the general picture that stable Real and Quaternionic bundles be at least poly-stable when viewed as holomorphic vector bundles, which is indeed the case. At any rate, since $\ModCds\simeq\cR^{\mathrm{irr}}(r,d)$ and $\ModCds^{\Si} \simeq \ModRds \sqcup \ModHds$, one can, as a first step, look for a natural action of $\Si$ on $\cR^{\mathrm{irr}}(r,d)$ and study the fixed points of that action. The problem is that it is not immediately clear what special property these particular representations $\rho:\Ga_d\lra\U(r)$ have. When $M^\si\neq\emptyset$, we can choose a base point $x\in M^\si$, have the group $\Si$ act on $\piCx$ and its central extension $\Ga_d$, and look at $\Si$-equivariant representations $\rho:\Ga_d\lra\U(r)$ with respect to an appropriate $\Si$-action on $\U(r)$ that will depend on whether one wants to obtain Real or Quaternionic vector bundles. We will see in Theorem \ref{equivariant_version_of_NS} that this indeed gives a version of the Narasimhan-Seshadri correspondence for Real and Quaternionic vector bundles over Klein surfaces with Real points. When $d=0$, we can replace $\Ga_0$ by $\piCx$ and, in that case, it is a consequence of the results of Biswas, Huisman and Hurtubise in \cite{BHH} and Proposition \ref{equivariant_rep} of the present paper that an irreducible representation $\rho:\piCx\lra\U(r)$ whose $\U(r)$-conjugacy class is fixed under the natural $\Si$-action on $\cR^{\mathrm{irr}}(r,0)\simeq\Hom^{\mathrm{irr}}(\piCx;\U(r))/\U(r)$ defines a geometrically stable Real vector bundle if and only if it extends to a homomorphism of $\Si$-augmentations (Definition \ref{orbifold_rep}) $\chi:\pi_1(M_\Si)\lra \U(r)\rtimes\Si$, where $\pi_1(M_\Si)$ is the (orbifold) fundamental group of $(M,\si)$ (Definition \ref{def_orb_fund_gp}) and $\Si\simeq\Z/2\Z$ acts on $\U(r)$ via the involutive group automorphism $u\lmt\ov{u}$. So the group $\Ga_0(\Si)$ is essentially $\pi_1(M_\Si)$ (see Remark \ref{generalized_orbifold_rep_space} for a precise statement) and the appropriate representation space is completely identified for Real vector bundles of degree $0$. Moreover, this naturally includes the case where $M^\si=\emptyset$. For Quaternionic vector bundles of degree $0$, Biswas, Huisman and Hurtubise introduced twisted maps $\pi_1(M_\Si)\lra \U(r)\rtimes \Si$ (in particular, these are not group homomorphisms anymore) in order to deal with the condition $\tau^2=-\Id_{\cE}$ and still find a (twisted) representation space in this case. The goal of the present paper is to identify the discrete group $\Ga_d(\Si)$ for all $d\in\Z$ as well as to find the appropriate representation spaces in both the Real and Quaternionic case. We will see in Section \ref{construction_of_bundles_section} that, by contrast with the group $\Ga_d$ in the usual Narasimhan-Seshadri correspondence, the group $\Ga_d(\Si)$ is an extension of $\pi_1(M_\Si)$ by $\Z$ which is never central. For Quaternionic vector bundles, it will be appropriate to consider not the semi-direct product $\U(r)\rtimes\Si$ but a different extension $\U(r)\times_{(-1)}\Si$: the representation space that we introduce for such bundles will then consist of homomorphisms of $\Si$-augmentations $\Ga_d(\Si) \lra \U(r)\times_{(-1)}\Si$. In particular, these will indeed be group homomorphisms.

\subsubsection{Statement of the correspondence} We can now formulate the Narasimhan-Seshadri correspondence for Real and Quaternionic vector bundles to be proved in this paper. Let $(M,\si)$ be a Klein surface of topological type $(g,n,a)$ where $g\geq 2$. Take $c=+1$ for Real vector bundles and $c=-1$ for Quaternionic vector bundles and let us denote by $\cM(r,d,\vw)$ the moduli space of semi-stable Real or Quaternionic vector bundles of topological type $(r,d,\vw)$. We refer to \eqref{future_CC} for a more detailed Definition of $\cM_c(r,d,\vw)$. Let $(L,\tauL)$ be any smooth Real line bundle of degree $d$ over $(M,\si)$ and let $(S(L),\tauL)$ be the associated Real Seifert manifold, defined as in Section \ref{construction_of_bundles_section}. Denote by $\Ga_d(\Si):=\pi_1(S(L)_\Si)$ the orbifold fundamental group of $(S(L),\tauL)$. The group $\Ga_d(\Si)$ is both a non-central extension $0\lra \Z \lra \Ga_d(\Si) \lra \pi_1(M_\Si)\lra 1$ and a $\Si$-augmentation $1\lra \Ga_d \lra \Ga_d(\Si) \lra \Si\lra 1$ (see Diagram \eqref{diagram_of_ext_and_aug}). A relevant fact for the present paper is that the isomorphism class of $\Ga_d(\Si)$ as a $\Si$-augmentation depends only on $d$ (Remark \ref{ext_class_vs_aug_class}). Consider now the action $u\lmt\ov{u}$ of $\Si$ on $\U(r)$ and interpret $c=\pm 1$ as an element of $H^2(\Si;\cZ(\U(r)))\simeq\{\pm1\}$, where $\Si$ acts on the center of $\U(r)$ by complex conjugation. Then there is an extension $\U(r)\times_c\Si$ of $\Si$ by $\U(r)$, whose isomorphism class depends only on $c$ (see \eqref{gp_law_on_our_aug} for the definition of that extension). It will be convenient to call $\U(r)\times_c\Si$ the extended unitary group. Finally, consider homomorphisms of $\Si$-augmentations $\chi:\Ga_d(\Si)\lra \U(r)\times_c\Si$ such that, for all $n\in\Z\subset\Ga_d(\Si)$, $\chi(n)=\exp(i\frac{2\pi}{r}n)I_r$ and let $\U(r)$ act on such representations $\chi$ by conjugation. Let $\cR_c(r,d)$ be the associated quotient (see Definition \ref{orbifold_rep_of_extensions}) and let $\cR_c(r,d,\vw)$ be the subset of $\cR_c(r,d)$ introduced in Definition \ref{sub_repvar}: $\cR_c(r,d,\vw)$ consists of those representations $\chi$ such that the bundle associated to $\chi$ by means of Theorem \ref{constr_of_bdles_prop} is of topological type $(r,d,\vw)$. Alternately, $\cR_c(r,d,\vw)$ can be defined directly in terms of the representations $\chi$, without recurring to the notion of an associated Real or Quaternionic vector bundle, as we do in Definition \ref{sub_repvar}. A representation $\chi:\Ga_d(\Si)\lra \U(r)\times_c\Si$ whose conjugacy class lies in $\cR_c(r,d,\vw)$ will be called of type $\vw$. The main result of the paper can then be stated as follows.

\begin{theorem}[The Narasimhan-Seshadri correspondence]\label{NS_over_R}
Let $(M,\si)$ be a compact Klein surface of genus $g\geq 2$. Let $(L,\tauL)$ be a Real line bundle of degree $d$ and let $(S(L),\tauL)$ be the associated Real Seifert manifold over $(M,\si)$. Set $c=\pm 1$. Then there is a homeomorphism $$\cM_c(r,d,\vw)\simeq \cR_c(r,d,\vw)$$ between the moduli space of semi-stable Real or Quaternionic vector bundles of topological type $(r,d,\vw)$ and the space of $\U(r)$-conjugacy classes of representations of type $\vw$ of the orbifold fundamental group $\Ga_d(\Si):=\piL$ into the extended unitary group $\U(r)\times_c\Si$.
\end{theorem}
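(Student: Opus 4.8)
The plan is to establish the homeomorphism $\cM_c(r,d,\vw)\simeq\cR_c(r,d,\vw)$ by combining the gauge-theoretic description of the moduli space with the holonomy of projectively flat connections, carefully tracking the Real/Quaternionic structure throughout. The starting point is the classical Narasimhan-Seshadri picture as formulated by Donaldson: a holomorphic structure on the smooth Hermitian bundle $E$ is poly-stable exactly when the associated $\cG_\C$-orbit of connections contains a projectively flat unitary connection, unique up to the action of $\cG_E$. The invariant-theoretic result announced in the abstract—that the closure of a semi-stable $\cG_\C^\tau$-orbit contains a unique $\cG_E^{\,\tau}$-orbit of projectively flat connections—provides the $\tau$-equivariant upgrade. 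So my first step would be to fix a Real or Quaternionic Hermitian bundle $(E,\tau)$ of topological type $(r,d,\vw)$ and identify $\cM_c(r,d,\vw)$ with $\cM^{ss}(E,\tau)$, which by the gauge-theoretic construction in \eqref{GIT_pic_Real_quat_case} is the space of $\cG_E^{\,\tau}$-orbits of $\tau$-compatible projectively flat connections. This reduces the problem to: \emph{such orbits are in bijection with $\cR_c(r,d,\vw)$}.

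\medskip

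\textbf{The holonomy construction.} The second step is to send a $\tau$-compatible projectively flat connection $A$ on $E$ to its holonomy. Because $A$ is only projectively flat (not flat) with constant central curvature $i\frac{d}{r}\vol_M\,\Id_E$, the holonomy does not descend to $\pi_1(M)$ but rather to the central extension $\Ga_d=\pi_1(S(L))$, yielding a representation $\rho:\Ga_d\lra\U(r)$ sending the central generator $n\in\Z$ to $\exp(i\tfrac{2\pi}{r}n)I_r$; this is the usual Narasimhan-Seshadri map $\rho\lmt\cE_\rho$ run in reverse. The key new point is to show that $\tau$-compatibility of $A$ forces this representation to be $\Si$-\emph{equivariant} in the precise sense encoded by $\cR_c(r,d,\vw)$. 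Concretely, $\tau$ intertwines $A$ with the pullback connection along $\si$, and lifting $\si$ to the Seifert manifold $S(L)$ (using the Real structure $\tauL$ on $L$) produces an element realizing the $\Si$-augmentation $1\lra\Ga_d\lra\Ga_d(\Si)\lra\Si\lra 1$. I would then check that $\tau$ extends $\rho$ to a homomorphism of $\Si$-augmentations $\chi:\Ga_d(\Si)\lra\U(r)\times_c\Si$, where the sign $c=\tau^2=\pm1$ is exactly the obstruction class in $H^2(\Si;\cZ(\U(r)))$ distinguishing the semidirect product ($c=+1$, Real) from the twisted extension ($c=-1$, Quaternionic). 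This is where the two cases of the theorem genuinely diverge and where the definition of the extended group $\U(r)\times_c\Si$ earns its keep.

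\medskip

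\textbf{The inverse map and well-definedness.} The third step is the converse: given $\chi$ with class in $\cR_c(r,d,\vw)$, restrict to $\rho=\chi|_{\Ga_d}$ and form $\cE_\rho=(\Mt\times\C^r)/\Ga_d$ as in the classical construction, then use the extra data $\chi(\si)$ to build a $\tau$ on $\cE_\rho$; conditions (1)–(3) of Definition \ref{def_real_and_quat_bundles} translate respectively into the augmentation property, the $\C$-antilinearity built into conjugation on $\U(r)$, and the identity $\chi(\si)^2\in\Ga_d$ mapping to $c\cdot\Id$. I must verify that the resulting $(\cE_\rho,\tau)$ is poly-stable \emph{and} $\tau$-compatible-poly-stable (invoking Proposition \ref{ps_and_tau_comp}), that its topological type is $(r,d,\vw)$—this is precisely what the refined condition of type $\vw$ in Definition \ref{sub_repvar} guarantees—and that this is inverse to the holonomy map up to the respective equivalences (gauge on one side, $\U(r)$-conjugacy on the other).

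\medskip

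\textbf{The main obstacle} I anticipate is twofold. First, tracking the topological type $\vw=w_1(E^\tau)$ through the holonomy correspondence: the Stiefel-Whitney invariant of the fixed-point bundle over $M^\si$ must be read off from the local behavior of $\chi(\si)$ near the Real points, and matching the combinatorial data of Theorem \ref{top_classif}(1) to the representation-theoretic conditions of Definition \ref{sub_repvar} is delicate. Second, and more serious, is promoting the set-theoretic bijection to a \emph{homeomorphism}. For this I would endow both sides with their natural topologies—the quotient topology on $\cG_E^{\,\tau}$-orbits of connections, and the conjugation-quotient topology on representations—and argue continuity of the holonomy map and its inverse. Continuity of holonomy is standard, but continuity of the inverse and properness typically require a compactness argument: I expect to use that $\cM^{ss}(E,\tau)$ is compact (being closed in the compact $\ModCd^\Si$) together with the Hausdorffness of $\cR_c(r,d,\vw)$, so that a continuous bijection from a compact space to a Hausdorff space is automatically a homeomorphism. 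Verifying that $\cR_c(r,d,\vw)$ is Hausdorff, and that the holonomy map is continuous in families (not merely pointwise), is where the real work lies.
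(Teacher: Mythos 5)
Your proposal is correct and follows essentially the same route as the paper: identify $\cM_c(r,d,\vw)$ with the gauge-theoretic quotient of Galois-invariant projectively flat connections (Theorem \ref{main_step_towards_NS} and \eqref{GIT_pic_Real_quat_case}), take extended holonomy valued in the $\Si$-augmentation $\U(r)\times_c\Si$ (Theorems \ref{extended_holonomy} and \ref{holonomy_rep}), invert via the associated-bundle construction of Theorem \ref{constr_of_bdles_prop}, and cut out the type-$\vw$ component via the Real obstruction map of Definition \ref{sub_repvar}. Your suggested compact-to-Hausdorff argument for upgrading the continuous bijection to a homeomorphism is a sensible way to finish and is consistent with the topologies the paper fixes in Definitions \ref{orbifold_rep} and \ref{define_a_topology}.
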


\noindent As a consequence, there are homeomorphisms $$\ModRd\simeq \cR_{+1}(r,d)\ \mathrm{and}\ \ModHd \simeq \cR_{-1}(r,d)$$ where $$\cR_c(r,d)=\Hom^{\Z}_{\Si}(\Ga_d(\Si);\U(r)\times_c\Si) / \U(r)$$ for $c=\pm1$. If we denote by $\cR^{\mathrm{irr}}_c(r,d)$ the set of homomorphisms of $\Si$-augmentations $\chi:\Ga_d(\Si)\lra \U(r)\times_c\Si$ whose restriction to $\Ga_d$ is irreducible, we obtain homeomorphisms $\mathcal{N}_c(r,d,\vw)\simeq \cR^{\mathrm{irr}}_c(r,d,\vw)$ so, in particular, $\ModRds \simeq \cR^{\,\mathrm{irr}}_{+1}(r,d)$ and $\ModHds \simeq \cR^{\,\mathrm{irr}}_{-1}(r,d)$. Moreover, Theorem \ref{NS_over_R} has the following consequence on the topology of the moduli space $\cM_c(r,d,\vw)$.

\begin{corollary}
For fixed $c$ and $(r,d,\vw)$, the homeomorphism type of the moduli space $\cM_c(r,d,\vw)$ depends only on the topological type $(g,n,a)$ of the Klein surface $(M,\si)$, i.e.\ the homeomorphism type of the surface $M/\si$, not on the complex analytic structure of $M$.
\end{corollary}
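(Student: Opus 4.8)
The plan is to reduce the statement to a property of the representation space appearing on the right-hand side of Theorem~\ref{NS_over_R}. By that theorem, for any Klein surface $(M,\si)$ of topological type $(g,n,a)$ one has a homeomorphism $\cM_c(r,d,\vw)\simeq\cR_c(r,d,\vw)$, so it suffices to prove that the space $\cR_c(r,d,\vw)$ depends, up to homeomorphism, only on $(g,n,a)$ together with the fixed data $c$ and $(r,d,\vw)$, and not on the complex-analytic structure of $M$. Now $\cR_c(r,d,\vw)$ is a subspace of the quotient by $\U(r)$-conjugation of the space of homomorphisms of $\Si$-augmentations $\chi:\Ga_d(\Si)\lra\U(r)\times_c\Si$ satisfying $\chi(n)=\exp(i\frac{2\pi}{r}n)I_r$ for all $n\in\Z\subset\Ga_d(\Si)$. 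The extended unitary group $\U(r)\times_c\Si$ and the conjugation action of $\U(r)$ depend only on $r$ and $c$; the central condition on $\Z$ and the type-$\vw$ condition of Definition~\ref{sub_repvar} are intrinsic to $\chi$. Hence everything reduces to showing that the source datum --- the $\Si$-augmentation $1\lra\Ga_d\lra\Ga_d(\Si)\lra\Si\lra1$, equipped with its distinguished central subgroup $\Z$ and the induced surjection onto $\pi_1(M_\Si)$ --- depends only on $(g,n,a)$ and $d$.

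To establish this, I would first invoke the topological classification of compact Klein surfaces recalled in the introduction: two Klein surfaces $(M,\si)$ and $(M',\si')$ of the same topological type $(g,n,a)$ are $\Si$-equivariantly homeomorphic. Fix such an equivariant homeomorphism $f:(M,\si)\lra(M',\si')$. Since the Real Seifert manifold $(S(L),\tauL)$ and its orbifold fundamental group $\Ga_d(\Si)=\piLR$ are defined from purely smooth (indeed topological) data --- the unit sphere bundle of a smooth Real line bundle $L$ of degree $d$, together with the Real structure induced by $\si$ --- the homeomorphism $f$ pushes the smooth Real circle bundle $S(L)$ over $(M,\si)$ forward to a smooth Real circle bundle of degree $d$ over $(M',\si')$. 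By Remark~\ref{ext_class_vs_aug_class}, the isomorphism class of $\Ga_d(\Si)$ as a $\Si$-augmentation does not depend on the particular choice of Real line bundle of degree $d$, so $f$ induces an isomorphism $\Ga_d(\Si)\simeq\Ga'_d(\Si)$ that is compatible with the $\Si$-augmentation structure, carries the central $\Z$ to the central $\Z$, and covers the isomorphism $\pi_1(M_\Si)\simeq\pi_1(M'_\Si)$ induced by $f$.

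With this group isomorphism in hand, the final step is to check that it induces a homeomorphism of the representation spaces. Because $\Ga_d(\Si)$ is finitely generated, evaluation on a finite generating set embeds the space of homomorphisms into a finite product of copies of $\U(r)\times_c\Si$ as a closed subspace, and a group isomorphism sending generators to words in the other generators yields a bi-continuous bijection between the two Hom-spaces. Since our isomorphism preserves the $\Si$-augmentation structure, the central normalization $\chi(n)=\exp(i\frac{2\pi}{r}n)I_r$, and the type-$\vw$ condition of Definition~\ref{sub_repvar}, it restricts to a homeomorphism of the relevant subspaces and is equivariant for the $\U(r)$-conjugation actions, hence descends to a homeomorphism $\cR_c(r,d,\vw)\simeq\cR'_c(r,d,\vw)$ of the quotients (Definition~\ref{orbifold_rep_of_extensions}). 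Combining this with Theorem~\ref{NS_over_R} applied on both $(M,\si)$ and $(M',\si')$ gives the desired homeomorphism $\cM_c(r,d,\vw)\simeq\cM'_c(r,d,\vw)$.

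I anticipate that the main obstacle is the middle step: showing that the equivariant homeomorphism $f$ really does induce an isomorphism of $\Ga_d(\Si)$ as a $\Si$-augmentation \emph{respecting the distinguished central $\Z$ and its central normalization}, rather than merely an abstract group isomorphism. One must verify that the fibre class of the Seifert fibration is a topological invariant carried correctly by $f$, so that the constraint on $\Z$ is preserved verbatim and not merely up to an automorphism of $\Z$; here the orientation behaviour of $f$ on $M$ (which may flip the sign of the Euler class, and hence replace $d$ by $-d$) must be controlled, either by choosing $f$ appropriately or by invoking the duality homeomorphism $\cM_c(r,d,\vw)\simeq\cM_c(r,-d,\vw)$. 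One must also confirm that the type-$\vw$ locus, which is \emph{a priori} defined via the associated bundle of Theorem~\ref{constr_of_bdles_prop}, matches under the transported isomorphism; this is where the behaviour of $\tauL$ over the Real locus $M^\si$ enters and requires care.
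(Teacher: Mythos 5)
Your proposal is correct and is essentially the argument the paper intends: the corollary is deduced by combining Theorem~\ref{NS_over_R} with the observation in Remark~\ref{ext_class_vs_aug_class} that the $\Si$-augmentation $\Ga_d(\Si)$ (hence the topology of $\cR_c(r,d)$, and of its pieces $\cR_c(r,d,\vw)$ cut out by the Real obstruction map, which is likewise purely topological) depends only on $(g,n,a)$, $d$ and $c$. The orientation issue you flag at the end is resolved by noting that if an equivariant homeomorphism $f:(M,\si)\to(M',\si')$ reverses orientation, then $f\circ\si$ is still equivariant and preserves orientation, so the degree $d$ can always be carried over correctly without invoking any duality statement.
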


Let us end the present section by outlining the strategy of proof for Theorem \ref{NS_over_R}. In Section \ref{orbifold_groups_and_Real_bundles} we review the basics of orbifold fundamental groups and, in Theorem \ref{constr_of_bdles_prop}, we set up a map $\cR_c(r,d) \lra\cM_c(r,d)$. Then in Section \ref{moduli_of_real_and_quat_bundles} we recall the gauge-theoretic construction of the moduli spaces $\cM_c(r,d,\vw)$, completing the invariant-theoretic picture of \cite{Sch_JSG} to include results on the closure of semi-stable orbits (Theorem \ref{main_step_towards_NS}). This later enables us to construct, in Section \ref{inv_conn_and_par_transport}, representations of orbifold fundamental groups by taking the holonomy representation of a Galois-invariant, projectively flat, unitary connection on a fixed Real or Quaternionic Hermitian vector bundle (Theorem \ref{holonomy_rep}). This sets up a collection of maps $\cM_c(r,d,\vw)\lra \cR_c(r,d,\vw)$ which are all homeomorphisms (Theorem \ref{NS_over_R_fixed_top_type}), thus completing the proof of Theorem \ref{NS_over_R}. We note that, when $M^\si\neq\emptyset$, we can use Proposition \ref{equivariant_rep} to provide a $\Si$-equivariant version of the Narasimhan-Seshadri correspondence for Real and Quaternionic vector bundles, which can be formulated as follows.

\begin{theorem}[Equivariant version of \ref{NS_over_R} over Klein surfaces with Real points]\label{equivariant_version_of_NS}
Assume that $M^\si\neq\emptyset$. Choose $x\in M^\si$ and $\ov{x}\in \Fix(\tauL)$ in the fiber of $S(L)$ above $x$. Consider the action of $\Si$ on $\piCx$ given by $\ga\lmt\si\circ\ga$, as well as the induced action on the central extension $\Ga_d=\pi_1(S(L);\ov{x})$ of $\piCx$. Let $\Si$ act on $\U(r)$ either by $\si_{\R}:u\lmt\ov{u}$ or, when $r=2r'$, by $\si_{\H}:u\lmt J\ov{u}J^{-1}$, where $J$ is the usual matrix of square  $-I_{r}$. Then there are homeomorphisms $$\ModRd \simeq \Hom^{\Z}(\Ga_d;\U(r))^{\Si} / \, \O(r)$$ and $$\ModHd \simeq  \Hom^{\Z}(\Ga_d;\U(r))^{\Si} /\, \Sp(\frac{r}{2})$$ where $ \Hom^{\Z}(\Ga_d;\U(r))^{\Si}$ is the set of $\Si$-equivariant representations $\rho:\Ga_d\lra\U(r)$ satisfying, for all $n\in\Z\subset\Ga_d$, $\rho(n)=\exp(i\frac{2\pi}{r}n)I_r$, acted upon by conjugation by the group $\U(r)^\Si$, which is equal to $\O(r)$ in the Real case and to $\Sp(\frac{r}{2})$ in the Quaternionic one.
\end{theorem}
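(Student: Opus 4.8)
The plan is to deduce the equivariant statement from Theorem \ref{NS_over_R} by a purely group-theoretic comparison of representation spaces, the geometric input being concentrated in the choice of a Real point. By Theorem \ref{NS_over_R} and the consequences recorded immediately after it, there are homeomorphisms $\ModRd\simeq\cR_{+1}(r,d)$ and $\ModHd\simeq\cR_{-1}(r,d)$, where $\cR_c(r,d)=\Hom^{\Z}_{\Si}(\Ga_d(\Si);\U(r)\times_c\Si)/\U(r)$. It therefore suffices to construct homeomorphisms $\cR_{+1}(r,d)\simeq\Hom^{\Z}(\Ga_d;\U(r))^{\Si}/\O(r)$ and $\cR_{-1}(r,d)\simeq\Hom^{\Z}(\Ga_d;\U(r))^{\Si}/\Sp(\frac r2)$, where $\Si$ acts on $\U(r)$ by $\si_{\R}$ and $\si_{\H}$ respectively.

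First I would exploit the hypothesis $M^\si\neq\emptyset$. The choice of $x\in M^\si$ and of $\ov x\in\Fix(\tauL)$ above $x$ singles out an element $\sit\in\Ga_d(\Si)=\pi_1(S(L)_\Si;\ov x)$ lying over the non-trivial element of $\Si$: since $\tauL^2=\Id$ and $\ov x$ is fixed, one has $\sit^2=1$, so the $\Si$-augmentation $1\lra\Ga_d\lra\Ga_d(\Si)\lra\Si\lra 1$ splits and $\Ga_d(\Si)=\Ga_d\rtimes\langle\sit\rangle$ with $\sit\ga\sit^{-1}=\si\cdot\ga$ for $\ga\in\Ga_d$ (this is the mechanism already used in Proposition \ref{equivariant_rep}). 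Given a homomorphism of $\Si$-augmentations $\chi:\Ga_d(\Si)\lra\U(r)\times_c\Si$, write $\rho:=\chi|_{\Ga_d}:\Ga_d\lra\U(r)$ and let $u_0\in\U(r)$ be determined by $\chi(\sit)$ through a fixed lift of $\si$ in $\U(r)\times_c\Si$. Evaluating the relations $\chi(\sit)^2=\chi(1)$ and $\chi(\sit\ga\sit^{-1})=\chi(\sit)\chi(\ga)\chi(\sit)^{-1}$, and using that the chosen lift of $\si$ squares to $c\,I_r$ and conjugates $u\in\U(r)$ to $\ov u$, I obtain the two conditions $u_0\,\ov{u_0}=c\,I_r$ and $\rho(\si\cdot\ga)=u_0\,\ov{\rho(\ga)}\,u_0^{-1}$, the central normalisation $\rho(n)=\exp(i\frac{2\pi}{r}n)I_r$ being inherited from the defining condition on $\chi$. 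Thus $\chi$ is equivalent to the pair $(\rho,u_0)$.

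The next step is to normalise $u_0$. Conjugating $\chi$ by $g\in\U(r)$ replaces $(\rho,u_0)$ by $(g\rho g^{-1},\,g\,u_0\,\ov g^{-1})=(g\rho g^{-1},\,g\,u_0\,g^{T})$, using $\ov g^{-1}=g^{T}$ for unitary $g$. When $c=+1$ the relation $u_0\ov{u_0}=I_r$ forces $u_0$ to be symmetric unitary, and the Autonne--Takagi factorisation shows that the twisted action $u_0\mapsto g\,u_0\,g^{T}$ is transitive on symmetric unitaries, so $u_0$ can be brought to $I_r$; the equivariance condition then becomes $\rho(\si\cdot\ga)=\ov{\rho(\ga)}=\si_{\R}(\rho(\ga))$, and the residual stabiliser $\{g:gg^{T}=I_r\}$ is exactly $\O(r)=\U(r)^{\Si}$. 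When $c=-1$ the relation $u_0\ov{u_0}=-I_r$ forces $u_0$ to be antisymmetric unitary (whence $r$ is even), the analogous factorisation brings $u_0$ to the standard matrix $J$, the condition becomes $\rho(\si\cdot\ga)=J\ov{\rho(\ga)}J^{-1}=\si_{\H}(\rho(\ga))$, and the stabiliser $\{g:gJg^{T}=J\}$ is exactly $\Sp(\frac r2)=\U(r)^{\Si}$. In either case the assignment $\chi\mapsto\rho$ descends to a continuous bijection between $\cR_c(r,d)$ and the corresponding equivariant quotient, and since both sides are quotients of compact Hausdorff representation spaces the bijection is a homeomorphism; combining with the two homeomorphisms of the first paragraph finishes the proof.

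The main obstacle is the normalisation step of the third paragraph: one must verify that the twisted conjugation $u_0\mapsto g\,u_0\,g^{T}$ acts transitively on the solution set of $u_0\ov{u_0}=c\,I_r$ (this is where the even dimensionality in the Quaternionic case is forced) and that the stabiliser of the normal form is precisely the fixed-point subgroup $\U(r)^{\Si}$, equal to $\O(r)$ for $c=+1$ and to $\Sp(\frac r2)$ for $c=-1$. Everything else is a book-keeping translation between augmentation homomorphisms and equivariant representations, made available by the splitting $\sit^2=1$ coming from the Real point.
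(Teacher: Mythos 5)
Your proposal is correct and follows essentially the same route as the paper: combine the homeomorphisms $\ModRd\simeq\cR_{+1}(r,d)$, $\ModHd\simeq\cR_{-1}(r,d)$ from Theorem \ref{NS_over_R} with the identification $\cR_c(r,d)\simeq\Hom^{\Z}(\Ga_d;\U(r))^\Si/\U(r)^\Si$ made available by the splitting $\Ga_d(\Si)\simeq\Ga_d\rtimes\Si$ coming from the Real point $\ov{x}$, which is exactly Proposition \ref{equivariant_rep}. Your normalisation of $u_0$ via the Autonne--Takagi (resp.\ antisymmetric) factorisation is precisely the paper's condition $H^1(\Si;\U(r))=\{1\}$ for the actions $\si_{\R}$ and $\si_{\H}$ (Property \eqref{connectedness_ppty}), written out in matrix form.
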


\noindent If we denote by $\Hom^{\Z}(\Ga_d;\U(r))^\Si_{\vw}$ the subset $\cW^{-1}(\vw)$ of $\Hom^{\Z}(\Ga_d;\U(r))^{\Si}$, where $\cW$ is the Real obstruction map of \eqref{real_obstruction_map}, then we also have a homeomorphism $$\cM_{\R}(r,d,\vw) \simeq \Hom^{\Z}(\Ga_d;\U(r))^\Si_{\vw}\, / \, \O(r).$$ In particular, the representation spaces $\Hom^{\Z}(\Ga_d;\U(r))^\Si_{\vw}\, / \, \O(r)$ are connected and so is $ \Hom^{\Z}(\Ga_d;\U(r))^{\Si} /\, \Sp(\frac{r}{2})$. In keeping with this equivariant perspective, we note that the Narasimhan-Seshadri map $\rho\lmt\cE_\rho$ satisfies $\si\rho\si^{-1}\lmt\ov{(\si^{-1})^*\cE_\rho}$, so the Narasimhan-Seshadri map can be made $\Si$-equivariant when $M^\si\neq\emptyset$.

\begin{ack}
It is a pleasure to thank Ahmed Abbes, Olivier Guichard, Johannes Huisman, Chiu-Chu Melissa Liu and Richard Wentworth for helpful conversations on the topics dealt with in this paper.
\end{ack}

\section{Representations of orbifold fundamental groups}\label{orbifold_groups_and_Real_bundles}

\subsection{The fundamental group of a Klein surface}\label{orbifold_groups}

Recall that a Klein surface $(M,\si)$ is in particular a topological space endowed with an action of the finite group $\Si=\Gal(\C/\R)\simeq \Z/2\Z$. Let $\Si$ act on the sphere $E\Si:=S^{\infty}$ by multiplication by $\pm 1$. This action is free with quotient $B\Si=\RP^{\infty}$ and, since $S^{\infty}$ is contractible, we have fixed a classifying bundle $E\Si \lra B\Si$ for the group $\Si$. In particular, $\pi_1(B\Si)$ is canonically identified with $\pi_0(\Si)=\Si$ via the connecting homomorphism of the homotopy long exact sequence of that fibration. The homotopy quotient of the action of $\Si$ on $M$ is, by definition, the space $M_\Si := M\times_{\Si} E\Sigma$, which is the topological quotient of $M\times E\Sigma$ by the diagonal action of $\Si$. When the action of $\Si$ on $M$ is free, $M_\Si$ is homotopically equivalent to the ordinary quotient $M/\Si$.
\begin{definition}\label{def_orb_fund_gp}
The fundamental group of the Klein surface $(M,\si)$ is the group $\pi_1(M\times_\Si E\Si;[x,1_\Si])$, which we will simply denote by $\pi_1(M_\Si;x)$ from now on.
\end{definition}
This group is known as the orbifold fundamental group of the orbifold $[M/\Si]$. It can be defined for general orbifolds, not only the good ones, but we shall not need this more general notion (\cite{Scott,Haefliger}). Given a point $x\in M$, the homotopy long exact sequence of the fibration $M\lra M_\Si \lra B\Si$ gives rise (using the fact that $M$ is connected and $\Si$ is discrete) to a short exact sequence
\begin{equation}\label{hes}
1\lra \piCx \lra \piRx \lra \Si \lra 1,
\end{equation} which we will call the homotopy exact sequence. In particular, there exists a group homomorphism $\Si \lra \Out(\piCx)$ from $\Si$ to the outer automorphism group $\Out(\piCx) := \Aut(\piCx) / \Int(\piCx)$, usually called the outer action.

To develop the theory of representations of fundamental groups of Klein surfaces, it will be useful to formulate it in terms of group augmentations.
\begin{definition}
An augmentation of the group $\Si$ (or $\Si$-augmentation) is a surjective group homomorphism $\alpha:G(\Si)\lra \Si$. A homomorphism between two $\Si$-augmentations $\alpha_1:G_1(\Si)\lra \Si$ and $\alpha_2:G_2(\Si)\lra \Si$ is a group homomorphism $\phi:G_1(\Si) \lra G_2(\Si)$ such that the following diagram commutes:
$$\begin{CD}
G_1(\Si) @>{\alpha_1}>> \Si \\
@VV{\phi}V  \| \\
G_2(\Si) @>{\alpha_2}>> \Si.
\end{CD}$$
\end{definition}
\noindent Note that, if $\phi$ is a homomorphism of $\Si$-augmentations, then $\phi(\ker\alpha_1) \subset \ker \alpha_2$. We will sometimes use the notation $G:=\ker\alpha$ when $\alpha:G(\Si) \lra\Si$ is a $\Si$-augmentation. The notion of representation of the fundamental group of a Klein surface then goes as follows.
\begin{definition}\label{orbifold_rep}
Let $\alpha: G(\Si)\lra \Si$ be a $\Si$-augmentation and denote by $G$ the kernel of $\alpha$. A representation of $\piRx$ in $G(\Si)$ is a homomorphism of $\Si$-augmentations, i.e.\ a group homomorphism $\chi:\piRx \lra G(\Si)$ making the following diagram commutative:
$$\begin{CD}
1 @>>> \piCx @>>> \piRx @>>> \Si @>>> 1 \\
@. @VVV @VV{\chi}V \| @.\\
1 @>>> G @>>> G(\Si) @>>> \Si @>>> 1.
\end{CD}$$ The set of such representations will be denoted by $\Hom_\Si(\piRx;G(\Si))$. When $G$ is a Lie group, $G(\Si)$ has an induced Lie group topology, which in turn causes $\Hom_\Si(\piRx;G(\Si))$ to inherit a topology.

Two representations $\chi_1$ and $\chi_2$ are called equivalent if there exists an element $g\in G$ such that $\chi_2 = \Int_g\circ \chi_1$, where by $\Int_g$ we denote conjugation by $g\in G$ inside $G(\Si)$. The associated representation space is the space \begin{equation}\label{orbifold_rep_var}\Hom_\Si(\piRx;G(\Si))/G,\end{equation} endowed, when $G$ is a Lie group, with the quotient topology. In particular, when $G$ is compact, this representation space is Hausdorff.
\end{definition} 

It is worth noting that the representation space in \eqref{orbifold_rep_var} is independent of the choice of the base point $x\in M$ since, if we choose a different base point $x'\in M$, we obtain an isomorphic $\Si$-augmentation by picking a path from $x$ to $x'$ and the difference of two such paths is an element of $\piCx$, so is eventually absorbed into the $G$-action. Also, if we choose a different model for $E\Si$, the short exact sequence \eqref{hes} will be replaced by a non-canonically isomorphic one but the representation space \eqref{orbifold_rep_var} will remain the same. In \eqref{alternate_def_pi_1_orb}, an alternate definition of $\piRx$ is given which is independent of the choice of a particular model for $E\Si$.

We will often use the notation $\rho:=\chi|_{\piCx}$ for the restriction of $\chi$ to $\piCx$ and $\chi=\widehat{\rho}$ for the extension of a group homomorphism $\rho:\piCx\lra G$ to a homomorphism of $\Si$-augmentations $\chi:\piRx\lra G(\Si)$. The fact that, in the notion of equivalence between representations of $\piRx$, the element $g$ is required to lie  in $G$ and not in $G(\Si)$ implies the existence of a map\begin{equation}\label{forgetful_map}\begin{array}{ccc}\Hom_\Si(\piRx;G(\Si)) / G & \lra & \Big( \Hom(\piCx;G)/G \Big)^\Si \\ {[}\chi{]} & \lmt & {[}\chi|_{\piCx}{]} \end{array}\end{equation} which is, however, neither injective nor surjective in general (for example in the present paper). Note that the target space of \eqref{forgetful_map} is the fixed-point set of the natural $\Si$-action on the $G$-representation space of $\piCx$, obtained by composing the outer action $$\Si \lra \Out(G) \times \Out(\piCx)$$ with the usual $\Out(G) \times \Out(\piCx)$-action on the $G$-representation space of $\piCx$ (the latter being defined by $[g,\beta]\cdot[\rho] = [\Int_g\circ\rho\circ\beta^{-1}]$). 

An important example of representation of $\piRx$ is given as follows. Let $\Mt(x)$ denote the set of homotopy classes of continuous paths in $M$ whose starting point is $x$. The canonical projection $q:\Mt(x)\lra M$ (taking a path $\eta$ to its ending point) turns $\Mt(x)$ into the universal covering space of $M$. Since the action of $\Si$ on $M\times E\Si$ is free, the quotient map $p:(M\times E\Si)\lra M_\Si$ is a covering map and the simply connected space $\Mt(x)\times E\Si$ is the universal covering space of $M_\Si$. Therefore, by the classical theory of covering spaces, we have an isomorphism of $\Si$-augmentations
$$\begin{CD}
1 @>>> \piCx @>>> \piRx @>>> \Si @>>> 1 \\
@. @VVV @VVV \| @.\\
1 @>>> \Aut\big(\Mt(x)/M\big) @>>> \Aut\big((\Mt(x)\times E\Si)/M_\Si\big) @>{\alpha}>> \Si @>>> 1
\end{CD}$$ where the augmentation map $\alpha$ sends the deck transformation $f\in\Aut((\Mt(x)\times E\Si)/M_\Si)$ to the induced deck transformation $\si_f\in \Aut((M\times E\Si)/M_\Si) \simeq \Si$ of the intermediate covering space $p:(M\times E\Si) \lra M_\Si$ between $\Mt(x)\times E\Si$ and $M_\Si$. Moreover, it is straightforward to check that the $\Si$-augmentation in the bottom line of the diagram above is isomorphic to $$1\lra \Aut(\Mt(x)/M) \lra \Aut_\Si (\Mt(x)/M) \lra \Si \lra 1$$ where \begin{equation}\label{alternate_def_pi_1_orb}\Aut_{\Si}(\Mt(x)/M) := \{h: \Mt(x)\lra \Mt(x)\ |\ \exists\ \si_{h}\in\Si, q\circ h=\si_h\circ q\},\end{equation} and the augmentation homomorphism is the map $h\lmt\si_h$. In particular, there is an isomorphism $\piRx\simeq\Aut_\Si(\Mt(x)/M)$ and the orbifold fundamental group of $[M/\Si]$ acts on $\Mt(x)$, the universal covering space of $M$, by generalized deck transformations (covering either $\Id_M$ or $\si:M\lra M$), as pictured in the following diagram. 
$$
\begin{CD}
\Mt(x) \times E\Si @>{f=h\otimes\si_h}>> \Mt(x) \times E\Si \\
@VV{q\otimes \Id}V @VV{q\otimes \Id}V \\
M \times E\Si @>{\si_f=\si_h\otimes\si_h}>> M \times E\Si \\
@VV{p}V @VV{p}V \\
M_{\Si} @= M_{\Si}
\end{CD}
$$
Yet another possible characterization of the orbifold fundamental group is the following description in terms of paths in $M\times E\Si$, which will be useful in Section \ref{parallel_transport}. Since the map $p:M\times E\Si \lra M_\Si$ is a covering map and we have chosen a point $x\in M$, a loop $\eta$ based at $[x,1_\Si]$ lifts unically to a path $\etat^{(x,1_\Si)}$ in $M\times E\Si$ satisfying $\etat^{(x,1_\Si)}(0)=(x,1_\Si)$. One then has $\etat^{(x,1_\Si)}(1)=\lambda_\eta^{-1}(x,1_\Si)$, where $\lambda_\eta\in\Si$ is the image of $\eta\in\piRx$ under the augmentation homomorphism. By projecting the path $\etat^{(x,1_\Si)}$ to $M$, we obtain a path $\ga_\eta$ in $M$, going from $x$ to $\lambda_\eta^{-1}(x)$. Let us now consider the set \begin{equation}\label{paths_description}\widetilde{P}_x:=\{(\ga,\lambda):\ga:[0;1]\lra M, \lambda\in\Si, \ga(0)=x, \ga(1)=\lambda^{-1}(x)\}\end{equation} endowed with the composition law \begin{equation}\label{composition_of_paths}(\ga_1,\lambda_1)(\ga_2,\lambda_2)=(\lambda_2^{-1}(\ga_1)\ga_2,\lambda_1\lambda_2),\end{equation} where the product $\lambda_2^{-1}(\ga_1)\ga_2$ is the ordinary composition of paths in $M$ (from right to left).  Then the set $P_x:=\widetilde{P}_x/\mathrm{(homotopy\ in}\ M\mathrm{)}$ is a group and the map $P_x\lra\Si$ sending $(\ga,\lambda)$ to $\lambda$ is an augmentation homomorphism, with kernel $L_x:=\widetilde{L_x}/\mathrm{(homotopy\ in}\ M\mathrm{)}$, where $$\widetilde{L}_x=\{\ga:[0;1]\lra M,\ga(0)=\ga(1)=x\}$$ Finally, the map $\eta \lmt \etat^{(x,1_\Si)} \lmt (\ga_\eta,\lambda_\eta)$ constructed above sets up an isomorphism of $\Si$-augmentations
$$\begin{CD}
1 @>>> \piCx @>>> \piRx @>>> \Si @>>> 1 \\
@. @VVV @VVV \| @.\\
1 @>>> L_x @>>> P_x @>{\alpha}>> \Si @>>> 1
\end{CD}$$ 

\noindent In the remainder of the paper, we will often omit the base point $x\in M$ and simply write $\pi_1(M)$, $\pi_1(M_\Si)$ and $\Mt$ in place of $\piCx$, $\piRx$ and $\Mt(x)$.

\subsection{Construction of Real and Quaternionic vector bundles}\label{construction_of_bundles_section}

We assume from now on that $M$ is compact, of genus $g\geq 1$. Let $(L,\tauL)$ be a $C^\infty$ Real line bundle over $(M,\si)$ and consider the smooth complex surface $L\setminus\{0_{L}\}$ obtained from $L$ by removing the zero section. This is preserved by $\tau_L$ and deformation retracts onto a $\Si$-equivariant circle bundle over $M$ (to construct it, we can for instance pick up a $\Si$-invariant Hermitian metric on $L$ and consider the unit bundle of $L$: this is indeed preserved by $\tauL$). Let us denote by $S(L)$ the $S^1$-bundle thus obtained (by definition, this is a Seifert manifold over $M$). The action of $\Si$ on $S(L)$ makes it possible to consider the orbifold fundamental group of the Real Seifert manifold $(S(L),\tauL)$. Allowing ourselves to omit base points from the notation, we have the following commutative diagram
\begin{equation}\label{diagram_of_ext_and_aug}
\begin{CD}
@. 0 @. 0 \\
@. @VVV @VVV \\
@. \Z @= \Z \\
@. @VVV @VVV \\
1 @>>> \pi_1(S(L)) @>>> \pi_1(S(L)_\Si) @>>> \Si @>>> 1 \\
@. @VVV @VVV \| \\
1 @>>> \pi_1(M) @>>> \pi_1(M_\Si) @>>> \Si @>>> 1\\
@. @VVV @VVV \\
@. 1 @. 1
\end{CD}
\end{equation} where $\Z\simeq\pi_1(S^1)$ is the fundamental group of the fiber of $S(L)$ and $S(L)_{\Si}$ is, as in Section \ref{orbifold_groups}, the homotopy quotient $S(L)\times_{\Si}E\Si$. In the first column, the group $\pi_1(S(L))$ is a central extension of $\pi_1(M)$ by $\Z$ while, in the second column, the extension $\pi_1(S(L)_\Si)$ of $\pi_1(M_\Si)$ by $\Z$ is non-central. Indeed, by a result of Kahn (\cite{Kahn}), smooth Real line bundles over a Klein surface $(M,\si)$ are classified by their equivariant first Chern class $c_1^{\Si}(L,\tauL)\in H^2_{\Si}(M;\underline{\Z})$ where $\Si$ acts on $M$ by the orientation-reversing involution $\si$ and on the local $\underline{\Z}$-coefficients by $n\lmt -n$. Since we are assuming that $g\geq 1$, the space $M_\Si$ is an Eilenberg-MacLane space $K(\pi;1)$ so $H^2_{\Si}(M;\underline{\Z})\simeq H^2(\pi_1(M_\Si);\Z)$, where $\pi_1(M_\Si)$ acts on the Abelian group $\Z$ via the augmentation homomorphism $\pi_1(M_\Si)\lra \Si$ and the previous (non-trivial) action of $\Si$ on $\Z$, meaning that there is a bijection between the set of isomorphism classes of smooth Real line bundles over $(M,\si)$ and extensions of $\pi_1(M_\Si)$ by $\Z$ inducing the action $n\lmt -n$ on $\Z$: that bijection is precisely given by the map taking a given Real line bundle $(L,\tauL)$ to the orbifold fundamental group $\pi_1(S(L)_\Si)$ of the associated Real Seifert manifold. Since the action of $\pi_1(M_\Si)$ on $\Z$ in that construction is non-trivial, the extension $\pi_1(S(L)_\Si)$ of $\pi_1(M_\Si)$ by $\Z$ is indeed non-central. Note that the sub-group $\pi_1(M)$ of $\pi_1(M_\Si)$, however, acts trivially on $\Z$, because it maps trivially to $\Si$ through the augmentation homomorphism, so the extension $\pi_1(S(L))$ of $\pi_1(M)$ by $\Z$ is central: it is the central extension corresponding to the ordinary first Chern class $c_1(L)\in H^2(M;\Z) \simeq H^2(\pi_1(M);\Z)$, as observed by Furuta and Steer in \cite{Furuta-Steer}.

We now wish to construct Real and Quaternionic vector bundles over the Klein surface $(M,\si)$ starting from certain representations of the orbifold fundamental group of the Real Seifert manifold $(S(L),\tau_L)$. Let us fix a smooth Real line bundle $(L,\tauL)$ over $(M,\si)$ and denote its degree by $d:=\int_{M} c_1(L) \in\Z$. In order to define the appropriate representation space for $\piLR$, we need to specify a $\Si$-augmentation $1\lra G \lra G(\Si) \lra \Si \lra 1$, where $G$ very soon will be the unitary group $\U(r)$. Such a $\Si$-augmentation in particular defines an outer action $\Si\lra\Out(G)$ and we will always assume that $\Si$ actually acts on $G$, in other words that there is an given lifting of that outer action to $\Aut(G)$. The typical example for us will be the $\Si$-action $\si:u\lmt\ov{u}$ on $\U(r)$. When an outer action $\Si\lra \Out(G)$ comes from a fixed action of $\Si$ on $G$, isomorphism classes of extensions of $\Si$ by $G$ which induce that outer action are in bijective correspondence with the group $H^2(\Si;\cZ(G))$, where $\cZ(G)$ is the center of $G$. If we represent a class $[c]\in H^2(\Si;\cZ(G))$ by a normalized $2$-cocycle $c:\Si\times\Si \lra \cZ(G)$, then the corresponding extension $G(\Si)$ may be viewed as the product set $G\times\Si$ equipped with the group law defined by the relation
\begin{equation}\label{gp_law_on_our_aug}
(c(\si_1,\si_2),1) (g_1,\si_1)(g_2,\si_2) = (g_1\si_1(g_2),\si_1\si_2).
\end{equation} We will denote that extension by $G\times_c\Si$. Its isomorphism class depends only on $[c]$ and, if $c$ is a coboundary, then $G\times_c\Si$ is isomorphic to the semi-direct product $G \rtimes \Si$ defined by the given action of $\Si$ on $G$. Since in this paper we are assuming that $\Si\simeq\Z/2\Z$, one may observe that
\begin{equation}\label{obs_on_H2}
H^2(\Si;\cZ(G)) \simeq \cZ(G)^\Si / \{a\si(a) : a\in \cZ(G)\}
\end{equation} In particular, the normalized $2$-cocycle $c$ is then entirely determined by its value $c(\si,\si)\in \cZ(G)^\Si$, where $\si$ is the non-trivial element of $\Si$. So in practice we can think of $c$ as an element of $\cZ(G)^\Si$. If for instance $G=\U(r)$ and $\si(u)=\ov{u}$, then $\cZ(G)=S^1$ so, if $a\in \cZ(G)$, $\si(a)=a^{-1}$. Therefore, by Observation \eqref{obs_on_H2}, one has $H^2(\Si;\cZ(G)) = \{\pm1\}$. From now on, we fix $G=\U(r)$ and we consider the $\Si$-action $\si:u\lmt\ov{u}$ on $\U(r)$. Moreover, we assume that an extension class $c=\pm1\in H^2(\Si;S^1)$ has been fixed.

Next, we need to specify what kind of representations of $\piLR$ we will be considering. Those will be homomorphisms of $\Si$-augmentations
\begin{equation}\label{morphisms_of_extensions}
\begin{CD}
1 @>>> \piL @>>> \piLR @>>> \Si @>>> 1\\
@. @VV{\chi|_{\piL}}V @VV{\chi}V \| \\
1 @>>> \U(r) @>>> \U(r)\times_c \Si @>>> \Si @>>> 1
\end{CD}
\end{equation} satisfying the additional requirement \begin{equation}\label{cond_on_morphisms}\forall\,n\in\Z, \chi(n) = e^{i\frac{2\pi}{r}\,n}I_r \in \cZ(\U(r)).\end{equation} Condition \eqref{cond_on_morphisms} means that the homomorphism of $\Si$-augmentations $\chi$ also makes the following diagram commute:
$$\begin{CD}
0 @>>> \Z @>>> \piLR @>>> \piR @>>> 1\\
@. @VV{n\mapsto\exp(i\frac{2\pi}{r}n)}V @VV{\chi}V @VVV \\
1 @>>> S^1 @>>> \U(r)\times_c \Si @>>> \PU(r)\rtimes\Si @>>> 1
\end{CD}$$ where $\Si$ acts on $\PU(r)$ by complex conjugation. 

\begin{definition}\label{orbifold_rep_of_extensions} The set of all homomorphisms of $\Si$-augmentations which satisfy Condition \eqref{cond_on_morphisms} will be denoted by $\Hom_{\Si}^{\Z}(\piLR;\U(r)\times_c\Si)$ and endowed with the $\U(r)$-action $u\cdot\chi = \Int_u\circ\chi$, thus providing a notion of equivalence of representations, as in Definition \ref{orbifold_rep}. The associated representation space is the quotient space \begin{equation}\label{our_rep_space}\mathcal{R}_c(r,d) := \Hom_{\Si}^{\Z}(\piLR;\U(r)\times_c\Si) / \U(r).\end{equation}
\end{definition}

\begin{remark}\label{usual_rep_var}
Let $\cR(r,d):=\Hom^\Z(\piL;\U(r))/\U(r)$ be the usual representation space for $\piL$, to be useful in Proposition \ref{equivariant_rep} and consisting of group homomorphisms $\rho:\piL\lra\U(r)$ satisfying, for all $n\in\Z$, $\rho(n)=\exp(i\frac{2\pi}{r}n)I_r$. Similarly to \eqref{forgetful_map}, there is a natural $\Si$-action on $\cR(r,d)$ and the map $\chi\lmt\chi|_{\piL}$ is a map from $\cR_c(r,d)\lra\cR(r,d)^\Si$, which is not surjective in general.
\end{remark}

To finish the construction of Real and Quaternionic vector bundles, we pick an element $\sit\in\piLR$ in the fiber above the non-trivial element $\si\in \Si$. Note that any two choices of such a lifting $\sit$ of $\si\in\Si$ differ by an element  of $\piL$. The element $\sit$ acts on $\Mt$ via the group homomorphism $\piLR\lra\piR\simeq\Aut_{\Si}(\Mt/M)$ (although $\sit^2\neq\Id_{\Mt}$ in general). The group $\Si$ acts on $\C^r$ via $\si:z\lmt\ov{z}$ (complex conjugation on each coordinate of the vector $z$) and on $\U(r)$ via $\si:u\lmt\ov{u}$. In particular, for all $u\in\U(r)$ and all $z\in\C^r$, $\si(uz)=\si(u)\si(z)$. Consider then any representation $\chi:\piLR\lra\U(r)\times_c\Si$ in the sense of \eqref{our_rep_space} and denote its restriction to $\piL$ by $\rho:=\chi|_{\piL}:\piL\lra\U(r)$. Let us  define $\usit\in\U(r)$ by the equation $\chi(\sit)=(\usit,\si)$ and equip the product bundle $\Mt\times\C^r$ over $\Mt$ with the diagonal $\piL$-action $\ga\cdot(\delta,v) := (\ga\cdot\delta, \rho(\ga) v)$, defined by the homomorphism $\piL\lra\piC\simeq\Aut(\Mt/M)$ and by $\rho$. This in particular defines a rank $r$ holomorphic vector bundle $$\cE_\rho=\Mt\times_{\rho}\C^r := (\Mt\times\C^r)/\piL$$ over $M$, whose determinant is smoothly isomorphic to $L$ so in particular $\deg\cE_{\rho}=d$ (if Condition \eqref{cond_on_morphisms} were modified to $n\lmt \exp(i\frac{k2\pi}{r}n)I_r$ for some fixed $k\in\Z$, then $\det(\cE_\rho)$ would become smoothly isomorphic to $L^k$ and have degree $kd$, \cite[Theorem 4.1]{Furuta-Steer}). Finally, consider the map 
\begin{equation}\label{def_of_real_structure}
\tausit:
\begin{array}{ccc}
\Mt\times\C^r & \lra & \Mt\times\C^r \\
(\delta,v) & \lmt & \big(\sit\cdot\delta,\usit\si(v)\big)
\end{array}.
\end{equation}
\begin{theorem}\label{constr_of_bdles_prop}
The map $\tausit$ induces an anti-holomorphic map $\tausi$ on the holomorphic vector bundle $\cE_\rho=\Mt\times_\rho\C^r$ over $M$. The map $\tau$ on $\cE_\rho$ lies above the map $\si$ on $M$, is fiberwise $\C$-anti-linear and squares to $c\,\Id_{\cE}$. In other words, $(\cE_{\rho},\tausi)$ is a Real vector bundle if $c=1$ and a Quaternionic vector bundle if $c=-1$. Moreover:
\begin{itemize}
\item Equivalent representations $\chi$ and $\chi'$ give rise to isomorphic Real or Quaternionic vector bundles $(\cE_\rho,\tausi)$ and $(\cE_{\rho'},\tausi')$.
\item A different choice of $\sit'\in\piLR$ in the fiber of the augmentation map above $\si\in\Si$ gives rise to a Real or Quaternionic vector bundle $(\cE_\rho,\tausi')$ isomorphic to $(\cE_\rho,\tausi)$.
\end{itemize}
\end{theorem}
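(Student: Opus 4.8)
The plan is to reduce the entire statement to two algebraic identities satisfied by $\rho:=\chi|_{\piL}$ and $\usit$, and then to feed these into otherwise routine equivariance checks for the map $\tausit$ of \eqref{def_of_real_structure}. Writing $\chi(\ga)=(\rho(\ga),1_\Si)$ for $\ga\in\piL$ and $\chi(\sit)=(\usit,\si)$, I would apply $\chi$ to the relations $\sit\ga=(\sit\ga\sit^{-1})\,\sit$ and $\sit^2\in\piL$. Using the twisted group law \eqref{gp_law_on_our_aug} with normalized cocycle value $c=c(\si,\si)\in\{\pm1\}$, and comparing $\chi(\sit)\chi(\ga)$ with $\chi(\sit\ga\sit^{-1})\chi(\sit)$ on the one hand, and $\chi(\sit)^2$ with $\chi(\sit^2)$ on the other, one reads off
\begin{equation*}
\rho(\sit\ga\sit^{-1})=\usit\,\ov{\rho(\ga)}\,\usit^{-1}\ (\ga\in\piL)
\qquad\text{and}\qquad
\usit\,\ov{\usit}=c\,\rho(\sit^2).
\end{equation*}
I expect this translation to be the conceptual heart of the argument: the cocycle $c$ enters exactly once, in the second identity, because the twisting in \eqref{gp_law_on_our_aug} is visible only when multiplying two elements that both lie above $\si$. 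This is precisely where the Real/Quaternionic dichotomy is born.

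Granting the first identity, the descent is a direct substitution. One checks that $\tausit$ is twisted-equivariant, $\tausit\big(\ga\cdot(\delta,v)\big)=(\sit\ga\sit^{-1})\cdot\tausit(\delta,v)$, using $\ov{\rho(\ga)v}=\ov{\rho(\ga)}\,\ov v$ and the first identity to match the $\C^r$-components. Since $\ga\mapsto\sit\ga\sit^{-1}$ is an automorphism of $\piL$ (normal in $\piLR$), $\tausit$ sends $\piL$-orbits to $\piL$-orbits and so descends to a well-defined $\tausi$ on $\cE_\rho$; note that all identifications take place at the level of orbits, so the (non-free) action of the central $\Z$ causes no trouble. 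That $\tausi$ lies above $\si$ is immediate because $\sit$ covers $\si$ on $\Mt$, and anti-holomorphicity together with fiberwise $\C$-antilinearity follow since $\sit$ acts anti-holomorphically on $\Mt$ while $v\mapsto\usit\,\ov v$ is $\C$-antilinear. For the square I would compute $\tausit^2(\delta,v)=\big(\sit^2\delta,\usit\ov{\usit}\,v\big)$ and then act by $(\sit^2)^{-1}\in\piL$ to return to the fiber over $\delta$; this replaces $\usit\ov{\usit}$ by $\rho(\sit^2)^{-1}\usit\ov{\usit}=c\,I_r$ by the second identity, so $\tausi^2=c\,\Id_{\cE}$. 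Hence $(\cE_\rho,\tausi)$ is Real when $c=1$ and Quaternionic when $c=-1$.

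For the two naturality statements I would argue separately but similarly. If $\chi'=\Int_u\circ\chi$ with $u\in\U(r)$, then $\rho'=\Int_u\circ\rho$ and, from \eqref{gp_law_on_our_aug}, $\chi'(\sit)=(\usit',\si)$ with $\usit'=u\,\usit\,\ov u^{-1}$; the map $(\delta,v)\mapsto(\delta,uv)$ then intertwines $\tausit$ with $\tausit'$ (the $\ov u^{-1}$ and $\ov u$ factors cancel against $\ov v$), and so descends to an isomorphism $(\cE_\rho,\tausi)\simeq(\cE_{\rho'},\tausi')$ of Real or Quaternionic bundles. Finally, a different lift of $\si$ has the form $\sit'=g\sit$ with $g\in\piL$, which leaves $\rho$ unchanged and gives $\chi(\sit')=(\rho(g)\usit,\si)$; one checks at once that $\tausit'(\delta,v)=g\cdot\tausit(\delta,v)$, so the induced maps coincide after projecting to $\cE_\rho$, i.e.\ $\tausi'=\tausi$. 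Thus the construction is in fact independent of the chosen lift, which is slightly stronger than the claimed isomorphism. The only genuinely delicate point throughout is the bookkeeping of $c$ in the first paragraph; everything downstream is mechanical verification.
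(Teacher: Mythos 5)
Your proof is correct and follows essentially the same route as the paper's: both arguments extract the identities $\rho(\sit\ga\sit^{-1})=\usit\,\ov{\rho(\ga)}\,\usit^{-1}$ and $\usit\ov{\usit}=c\,\rho(\sit^2)$ by applying $\chi$ to $\sit\ga\sit^{-1}$ and $\sit^2$ via the twisted group law \eqref{gp_law_on_our_aug}, and then use the first to descend $\tausit$ to $\cE_\rho$ and the second to compute $\tausi^2=c\,\Id_{\cE}$. The only difference is that you also write out the verification of the two bullet points (which the paper's proof dismisses with ``the only things to check are...''), correctly computing $\usit'=u\usit\ov{u}^{-1}$ and $u_{\sit'}=\rho(g)\usit$, and you observe the slightly stronger fact that a different lift $\sit'=g\sit$ yields $\tausi'=\tausi$ on the nose.
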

\begin{proof} The only things to check are that $\tausit$ descends to $\cE_\rho$ and that the induced map $\tausi$ squares to $c\,\Id_{\cE}$.

If $(\delta,v)\in (\Mt\times\C^r)$ and $\ga\in\piC$, we have
\begin{eqnarray*}
\tausit\big(\ga\cdot(\delta,v)\big) & = & \tausit\big(\ga\cdot\delta,\rho(\ga) v\big)\\
& = & \big(\sit\big(\ga\cdot\delta\big),u_{\sit}\si\big(\rho(\ga) v)\big)\\
& = & \big(\sit\ga\sit^{-1}\cdot\big(\sit\cdot\delta\big),\usit\si\big(\rho(\ga)\big)\usit^{-1}\big(\usit\si(v)\big)\big)\\
& \sim & \big(\sit\cdot\delta,\usit\si(v)\big) = \tausit(\delta,v)
\end{eqnarray*} since $\sit\ga\sit^{-1}\in\piC$ and, using Equation \eqref{gp_law_on_our_aug},
\begin{eqnarray*}
\big(\rho(\sit\ga\sit^{-1}),1\big) =\chi\big(\sit\ga\sit^{-1}\big) & = & \chi(\sit)\chi(\ga)\chi(\sit)^{-1}\\
& = & \big(\usit,\si\big)\big(\rho(\ga),1\big)\big(\usit,\si\big)^{-1}\\
& = & \big(\usit\si\big(\rho(\ga)\big),\si\big)\big(c\si(\usit^{-1}),\si\big)\\
& = & \big(\usit\si\big(\rho(\ga)\big)u_{\sit}^{-1},1\big)
\end{eqnarray*} so $\tausit$ indeed induces a map $\tausi$ on $\cE_{\rho}=\Mt\times_{\rho}\C^r$.

Moreover, one has $\tausit^2\big(\delta,v\big) =  \big(\sit^2\cdot\delta,\usit\si(\usit)v\big) \sim (\delta,c\cdot v)$ since, using again Equation \eqref{gp_law_on_our_aug}, $\chi(\sit^2) = \chi(\sit)\chi(\sit) = (\usit,\si)(\usit,\si) = (c^{-1}\usit\si(\usit),\si^2)$ so the induced map $\tausi$ indeed satisfies $\tausi^2=c\,\Id_{\cE}$.
\end{proof}

\noindent By the Narasimhan-Seshadri theorem (\cite{NS}), the vector bundle $\cE_\rho$ is poly-stable and of rank $r$ and degree $d$. What Theorem \ref{constr_of_bdles_prop} says is that, if $\rho:\piL\lra\U(r)$ extends to a representation $\widehat{\rho}:\piLR\lra \U(r)\times_c\Si$, then $\cE_\rho$ admits a natural Real structure if $c=1$ and a natural Quaternionic structure if $c=-1$. One of the goals of the present paper is to show that, conversely, any poly-stable Real or Quaternionic vector bundle of rank $r$ and degree $d$ on $(M,\si)$ is isomorphic to a bundle $(\cE_\rho,\tau)$ obtained as above from a rank $r$ representation $\widehat{\rho}$ of $\piLR$, where $(L,\tauL)$ is any smooth Real line bundle of degree $d$ on $(M,\si)$.

\begin{remark}\label{generalized_orbifold_rep_space}
When $L=L_0:=M\times\C$, the trivial Real line bundle on $M$, one has $\pi_1(S(L_0)_\Si) \simeq \Z\rtimes\piR$, where $\piR$ acts on $\Z$ through the augmentation homomorphism to $\Si$ and the non-trivial action of $\Si$ on $\Z$. In particular, $\piR$ is now a sub-group of $\pi_1(S(L_0)_\Si)$, so there is a well-defined map $$\begin{array}{ccc}
\cR_c(r,d) & \lra & \Hom_{\Si}(\piR;\U(r)\times_c\Si) / \U(r) \\
\chi & \lmt & \chi|_{\piR}
\end{array}$$ which is in fact a homeomorphism whose inverse map is given as follows: to any homomorphism of $\Si$-augmentations $\chi_0:\piR\lra \U(r)\times_c\Si$, we associate the map $$\chi: \begin{array}{ccc}
\Z\rtimes\piR & \lra & \U(r)\times_c\Si\\
(n,\eta) & \lmt & (e^{i\frac{2\pi}{r}n},1_\Si)\,\chi_0(\eta)
\end{array}$$ which is readily checked to be a homomorphism of $\Si$-augmentations such that $\chi(n,1)=\exp(i\frac{2\pi}{r}n)I_r$ for all $n$ in $\Z$ and satisfying $\chi|_{\piR}=\chi_0$. Thus, the representation spaces $\mathcal{R}_c(r,d)$ constructed in \eqref{our_rep_space} are a generalization of the representation space of $\piR$ introduced in \eqref{orbifold_rep_var}. 
\end{remark}

To sum up, on a Klein surface $(M,\si)$, we can construct, given any smooth Real line bundle, an orbifold fundamental group $\pi_1(S(L)_\Si)$, which is a non-central extension of $\pi_1(M_\Si)$ by $\Z$, as well as representation spaces $\cR_c(r,d)$ for $\pi_1(S(L)_\Si)$. Theorem \ref{constr_of_bdles_prop} then says that there are maps 
\begin{equation}\label{map_from_reps_to_bundles} 
\cR_{+1}(r,d) \lra \ModRd\quad \mathrm{and}\quad \cR_{-1}(r,d)\lra \ModHd
\end{equation} where $\ModRd$ (resp\ $\ModHd$) denotes the moduli space of poly-stable Real (resp.\ Quaternionic) vector bundles of rank r and degree d (see Definition \ref{def_moduli_spaces_Real_Quat_bdles}). Theorem \ref{NS_over_R} implies that these maps are homeomorphisms.

\begin{remark}\label{ext_class_vs_aug_class}
Note that isomorphic smooth Real line bundles give rise to isomorphic extensions $\piLR$ of $\piR$ by $\Z$ and non-isomorphic smooth Real line bundles give rise to non-isomorphic extensions but that, as an augmentation of $\Si$, $\piLR$ only depends on the degree $d$ of $L$, not on the whole $c_1^{\Si}(L,\tau_L)$, because the extension class of the central row in Diagram \eqref{diagram_of_ext_and_aug} is determined by a class in $H^2(\Si;\cZ(\piL)) \simeq H^2(\Si;\Z)$, which is trivial when $\Si$ acts on $\Z$ by $n\lmt -n$ (see \eqref{obs_on_H2}). As a consequence, the topology of $\cR_c(r,d)$ only depends on the topological invariants $(g,n,a)$ of the Klein surface $(M,\si)$, the topological invariants $(r,d)$ of complex vector bundles over $M$ and the extension class $c=\pm 1$ that we consider. We shall see in Section \ref{CC} how to use additional topological invariants to distinguish the connected components of $R_c(r,d)$.
\end{remark}

\subsection{The case of a Klein surface with Real points}\label{case_with_real_points}

When the Klein surface $(M,\si)$ satisfies $M^\si\neq\emptyset$, the construction of Real and Quaternionic vector bundles given in Theorem \ref{constr_of_bdles_prop} can be simplified by choosing the base point $x\in M$ to lie in $M^\si$. By functoriality of the orbifold fundamental group, the choice of $x\in M^\si$ makes $\piRx$ isomorphic, as an extension of $\Si$ by $\piCx$, to the semi-direct product $\piCx\rtimes\Si$ for the $\Si$-action on $\piCx$ defined by $\ga\lmt\si\circ\ga$, which we simply denote by $\si(\ga)$. Since the Real Seifert manifolds constructed in Section \ref{construction_of_bundles_section} always have Real points when $M$ does, the orbifold fundamental group $\piLR$ is also isomorphic to a semi-direct product $\piL\rtimes\Si$, via the choice of a point $\ov{x}\in\Fix(\tauL)$ in the fiber of $S(L)$ above $x$. So the representation spaces \eqref{our_rep_space} now involve those semi-direct products $\piL\rtimes\Si$, as well as the extensions $\U(r)\times_c\Si$ for $c=\pm 1$. When $c=1$, $\U(r)\times_c\Si$ is isomorphic to the semi-direct product $\U(r)\rtimes_{\si_\R}\Si$, where $\si_\R:u\lmt\ov{u}$ on $\U(r)$. When $c=-1$, $\U(r)\times_{c}\Si$ is not a semi-direct product for the $\Si$-action $u\lmt\ov{u}$ on $\U(r)$ but it can be sometimes be made into a semi-direct product for a different lifting of the same outer action $\Si\lra\Out(\U(r))$, namely for the $\Si$-action $\si_\H:u\lmt J\ov{u}J^{-1}$ when $r=2r'$ and $J=\begin{bmatrix} 0 & -I_{r'} \\ I_{r'} & 0\end{bmatrix}$, which indeed differs from $\si_\R:u\lmt \ov{u}$ only by the inner automorphism $\Int_{J}$. Note that the purpose of doing that is to handle Quaternionic vector bundles over $(M,\si)$ and that the latter always have even rank $r=2r'$ when $M^\si\neq\emptyset$, so the parity condition on $r$ is not a restriction. Moreover, when $c=-I_r\in \cZ(\U(r))^\Si$, one has $c=J^2=J\si(J)$, so the cocycle $c=-1$ can be written under the form $a\si(a)$ for some $a\in G$ (this means that $c$ is a $\cZ(G)$-valued $2$-cocycle which, in a sense, splits over $G$ but not over $\cZ(G)$, compare \eqref{obs_on_H2}). This readily implies that the map $$\begin{array}{ccc}
\U(2r')\times_{(-1)}\Si & \lra & \U(2r')\rtimes_{\si_\H}\Si\\
(u,\eps) & \lmt & \big((JuJ^{-1})a_\eps,\eps\big)
\end{array}$$ where $a_{1_\Si} = I_{2s}$ and $a_\si=J$ ($a$ is a $G$-valued normalized $1$-cochain) is an isomorphism of $\Si$-augmentations. 
Observe that the two extensions of $\Si$ by $\U(2r')$ above indeed induce the same outer action $\Si\lra\Out(\U(2r'))$, namely the outer action defined by the inner equivalence class of $\si_\R$, as well as the same $\Si$-action on $\cZ(\U(2r'))\simeq S^1$, namely $z\lmt\ov{z}$.

Summing up, when $M^\si\neq\emptyset$, the representation spaces $\cR_c(r,d)$ of \eqref{our_rep_space} become $$\Hom_{\Si}^{\Z}(\piL\rtimes\Si;\U(r)\rtimes_{\si}\Si) / \U(r)$$ for the $\Si$-action on $\U(r)$ defined by either $\si=\si_\R$ (if $c=+1$) or $\si=\si_\H$ (if $c=-1$). It turns out that the involutions $\si_\R$ and $\si_\H$ share the following property: \begin{equation}\label{connectedness_ppty} \si(u)=u^{-1}\ \mathrm{if\ and\ only\ if}\ \exists\ v\in\U(r), u=v\si(v^{-1})\end{equation} (when $\si=\si_\R$, this simply means that a symmetric unitary matrix $u$ is of the form $vv^t$, which is a classical result; the case of $\si_\H$ is similar) and that Property \eqref{connectedness_ppty} is a sufficient condition to remove the semi-direct products in the above representation spaces, in the following sense.

\begin{proposition}\label{equivariant_rep}
Assume that $M^\si\neq\emptyset$ and that $\Si$ acts on $\U(r)$ by $\si_\R:u\lmt\ov{u}$ or, when $r=2r'$, either by $\si_\R$ or by $\si_\H:u\lmt J\ov{u}J^{-1}$. Then there is a homeomorphism 
\begin{equation*} 
\cR_c(r,d) \simeq \Hom^{\Z}(\piL;\U(r))^\Si /  \U(r)^\Si \, ,
\end{equation*} where on the 
right-hand side
we consider all $\Si$-equivariant representations $$\rho:\piL\lra\U(r)$$ such that, for all $n\in\Z\subset\piL$, $\rho(n)=\exp(i\frac{2\pi}{r}n)I_r$, up to $\U(r)^\Si$-conjugation.
\end{proposition}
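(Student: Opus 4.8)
The plan is to start from the reduction established in the text immediately preceding the statement: once we fix $x\in M^\si$ and $\ov{x}\in\Fix(\tauL)$, we have
$$\cR_c(r,d)\;=\;\Hom^{\Z}_{\Si}(\piL\rtimes\Si;\,\U(r)\rtimes_\si\Si)/\U(r),$$
where $\si=\si_\R$ if $c=+1$ and $\si=\si_\H$ if $c=-1$. Write $\sit:=(1,\si)$ for the canonical lift of the nontrivial element of $\Si$; it satisfies $\sit^2=1$ and $\sit\ga\sit^{-1}=\si(\ga)$ for $\ga\in\piL$. First I would parametrize a homomorphism of $\Si$-augmentations $\chi$ by the pair $(\rho,\usit)$, with $\rho:=\chi|_{\piL}$ and $\chi(\sit)=(\usit,\si)$, and translate the defining relations of $\chi$ into $\usit\,\si(\rho(\ga))\,\usit^{-1}=\rho(\si(\ga))$ for all $\ga\in\piL$ (coming from $\sit\ga\sit^{-1}=\si(\ga)$) together with $\usit\,\si(\usit)=I_r$, i.e. $\si(\usit)=\usit^{-1}$ (coming from $\sit^2=1$). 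Condition \eqref{cond_on_morphisms} amounts to $\rho\in\Hom^\Z(\piL;\U(r))$, since $\Z\subset\piL$.

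Next I would introduce the map $\Phi$ sending a $\Si$-equivariant $\rho$ to the unique $\chi$ with $\chi|_{\piL}=\rho$ and $\chi(\sit)=(I_r,\si)$: both relations above then hold (the first becomes the equivariance of $\rho$, the second is trivial), so $\Phi(\rho)$ genuinely lies in $\Hom^\Z_\Si$, and $\Phi$ is manifestly continuous. The crux is to show that every $\chi$ is $\U(r)$-conjugate to one in the image of $\Phi$. This is exactly where Property \eqref{connectedness_ppty} is used: from $\si(\usit)=\usit^{-1}$ it supplies $w\in\U(r)$ with $\usit=w\,\si(w^{-1})$, and a short computation shows that conjugating $\chi$ by $v:=w^{-1}$ replaces $\chi(\sit)$ by $(v\,\usit\,\si(v^{-1}),\si)=(I_r,\si)$, after which the twisted relation $\usit\,\si(\rho(\ga))\,\usit^{-1}=\rho(\si(\ga))$ becomes $\si(\rho(\ga))=\rho(\si(\ga))$, so the conjugated $\rho$ is $\Si$-equivariant. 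Trivializing $\usit$ in this way is the main obstacle, and it is precisely the step that forces the hypothesis that $\si$ be $\si_\R$ or $\si_\H$.

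I would then check that $\Phi$ is $\U(r)^\Si$-equivariant: for $v\in\U(r)^\Si$ one has $\Int_v(I_r,\si)=(v\,\si(v^{-1}),\si)=(I_r,\si)$, whence $\Phi(\Int_v\rho)=\Int_v\Phi(\rho)$, so $\Phi$ descends to a continuous map $\overline{\Phi}$ between the two quotients. Surjectivity of $\overline{\Phi}$ is the normalization just established. For injectivity, if $\Phi(\rho_1)=\Int_v\,\Phi(\rho_2)$ with $v\in\U(r)$, evaluating at $\sit$ forces $(I_r,\si)=(v\,\si(v^{-1}),\si)$, so $v\,\si(v^{-1})=I_r$, i.e. $v\in\U(r)^\Si$, and hence $\rho_1=\Int_v\rho_2$ with $v\in\U(r)^\Si$. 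Thus $\overline{\Phi}$ is a continuous bijection.

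Finally, to promote $\overline{\Phi}$ to a homeomorphism I would argue by compactness. As $\piL$ is finitely generated and $\U(r)$ is compact, $\Hom^\Z(\piL;\U(r))$ is a closed, hence compact, subset of a finite product of copies of $\U(r)$; the $\Si$-equivariant locus is closed in it, and $\U(r)^\Si$ is compact, so the source $\Hom^\Z(\piL;\U(r))^\Si/\U(r)^\Si$ is compact Hausdorff. Likewise $\cR_c(r,d)$ is a quotient of a compact space by the compact group $\U(r)$ and is Hausdorff by the remark following Definition \ref{orbifold_rep}, hence compact Hausdorff. A continuous bijection from a compact space onto a Hausdorff space is a homeomorphism, so $\overline{\Phi}$ is the asserted homeomorphism.
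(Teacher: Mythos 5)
Your proposal is correct and follows essentially the same route as the paper: the paper also parametrizes $\chi$ by $(\rho,\usit)$, observes that $\usit$ is a normalized $1$-cocycle, uses the vanishing of $H^1(\Si;\U(r))$ (which it notes is exactly Property \eqref{connectedness_ppty}) to conjugate $\usit$ to $I_r$, and proves injectivity by evaluating at the lift of $\si$ to conclude $v\in\U(r)^\Si$ — your argument is just the special case $\Si\simeq\Z/2\Z$ of the paper's version for a general finite group acting on a general Lie group with trivial $H^1$. The one thing you add that the paper leaves implicit is the final compactness argument upgrading the continuous bijection to a homeomorphism, which is a welcome completion rather than a deviation.
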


\noindent Note that $\U(r)^\Si=\O(r)$ if $\si=\si_\R$ and $\U(r)^\Si = \Sp(\frac{r}{2})$ if $\si=\si_\H$ and that the representation space on the 
right-hand
side does not depend on the choice of the base point $x\in M^\si$ since, by the observation after Definition \ref{orbifold_rep}, the representation space $\cR_c(r,d)$ 
is independent of such a choice.

The proof of Proposition \ref{equivariant_rep} is based on an argument in group cohomology. Recall that we are assuming that there is a fixed action of $\Si$, which could in fact be any finite group, on the given Lie group $G=\U(r)$, which could also be arbitrary, by group automorphisms. The pointed set $H^1(\Si;G)$ is by definition the set of all crossed homomorphisms, i.e.\ (normalized) maps $a:\Si\lra G$ such that, for all $(\si,\si')\in\Si\times\Si$, $a_{\si\si'}=a_{\si}\si(a_{\si'})$, modulo the following equivalence relation: $a\sim a'$ if there exists $g\in G$ such that, for all $\si\in\Si$, $a'_\si=ga_\si\si(g^{-1})$. If the action of $\Si$ on $G$ is trivial, then $H^1(\Si;G)=\Hom(\Si;G)/G$. Also, when $\Si\simeq\Z/2\Z$, a normalized $1$-cocycle $a$ is determined by its value on the non-trivial element $\si\in\Si$ (so we can identify $a_\si$ and $a$) and we have $H^1(\Z/2\Z;G) \simeq \{a\in G\ |\ \si(a)=a^{-1}\} / G$, where the $G$-action on the set of elements $a\in G$ satisfying $\si(a)=a^{-1}$ is given by $g\cdot a=g a\si(g^{-1})$, which again is ordinary conjugacy if the $\Si$-action on $G$ is trivial so, in that case, $H^1(\Z/2\Z;G)$ is the set of conjugacy classes of order $2$ elements of $G$. We then see that Property \eqref{connectedness_ppty} amounts to saying that $H^1(\Si;\U(r))=\{1\}$ when $\Si\simeq\Z/2\Z$ acts on $\U(r)$ by $u\lmt\ov{u}$ or by $u\lmt J\ov{u}J^{-1}$ and we can now give a proof of Proposition \ref{equivariant_rep} which is valid for an arbitrary Lie group $G$ acted upon by group automorphisms by an arbitrary finite group $\Si$ assuming only that $H^1(\Si;G)=\{1\}$.

\begin{proof}[Proof of Proposition \ref{equivariant_rep}]
The particular group $\piL$ plays no role in what follows and can be replaced by an arbitrary group $\Ga$ on which $\Si$ acts by group automorphisms. The same goes for the condition of the image of the sub-group $\Z$, which can be omitted on both sides. Given a $\Si$-equivariant group homomorphism $\rho:\Ga\lra G$, it is straightforward to check that the map $\widehat{\rho}:(\ga,\si)\lmt(\rho(\ga),\si)$ is a homomorphism of $\Si$-augmentations from $\Ga\rtimes\Si$ to $G\rtimes\Si$, satisfying in particular $\widehat{\rho}(1,\si)=(1,\si)$, and that this induces a map \begin{equation}\label{equivariant_rep_bijection}\begin{array}{ccc} \Hom(\Ga;G)^\Si / G^\Si & \lra & \Hom_\Si(\Ga\rtimes\Si;G\rtimes\Si) / G\\ {[}\rho{]} & \lmt & {[}\widehat{\rho}{]}\end{array}.\end{equation} To show that the map \eqref{equivariant_rep_bijection} is bijective when $H^1(\Si;G)=\{1\}$, let us consider an arbitrary homomorphism of $\Si$-augmentations $\chi:\Ga\rtimes\Si\lra G\rtimes\Si$  and the group homomorphism $\rho=\chi|_{\Ga}:\Ga\lra G$. Let us then define, for all $\si\in\Si$ the element $a_\si\in G$ by the condition $\chi(1,\si)=(a_\si,\si)$. The fact that $\chi$ is a group homomorphism implies that $(a_\si)_{\si\in\Si}$ is a normalized $1$-cocycle with values in $G$: $$(a_{\si\si'},\si\si') = \chi(1,\si\si') = \chi(1,\si)\chi(1,\si') = (a_\si \si(a_{\si'}),\si\si').$$ Let us then compute $\chi(\ga,\si)$ in two different ways. On the one hand, $\chi(\ga,\si)= \chi(\ga,1)\chi(1,\si) = (\rho(\ga)a_\si,\si)$ and on the other hand, $\chi(\ga,\si) = \chi(1,\si)\chi(\si^{-1}(\ga),1) = (a_\si(\si\rho\si^{-1})(\ga),\si)$. So, for all $\si\in\Si$, one has \begin{equation}\label{almost_equiv} (\Int_{a_\si}\circ\si)\circ\rho\circ\si^{-1} = \rho.\end{equation} Since $H^1(\Si;G)$ is assumed to be trivial, there exists $h\in G$ such that, for all $\si\in\Si$, $a_\si=h^{-1}\si(h)$ so Equation \eqref{almost_equiv} becomes $\si\circ(\Int_h\circ\rho)\circ\si^{-1} = \Int_h\circ\rho$, which precisely means that $(\Int_h\circ\rho):\Ga\lra G$ is $\Si$-equivariant. Moreover, for all $(\ga,\si)\in\Ga\times\Si$, $\widehat{(\Int_h\circ\rho)}(\ga,\si) = \big( (\Int_h\circ\rho)(\ga),\si\big) = \big(h\rho(\ga)h^{-1},\si\big)$ and 
\begin{eqnarray*}
\big(\Int_{(h,1)}\circ\chi\big) (\ga,\si) & = & (h,1)\chi(\ga,1)\chi(1,\si)(h,1)^{-1} \\
 & = & (h,1) (\rho(\ga),1)(h^{-1}\si(h),\si)(h^{-1},1) \\
 & = & (h\rho(\ga)h^{-1},\si)
\end{eqnarray*}
so $\widehat{(\Int_h\circ\rho)} = \Int_{(h,1)}\circ\chi$, which is $G$-conjugate to $\chi$, and we have indeed proved that the map \eqref{equivariant_rep_bijection} is surjective.

To show that it is injective, let us take two $\Si$-equivariant representations $\rho_1,\rho_2:\Ga\lra G$ and assume that $\widehat{\rho_2}=\Int_{(g,1)}\circ\widehat{\rho_1}$ for some $g\in G$. Then, for all $\si\in \Si$, $$(1,\si)=\widehat{\rho_2}(1,\si) = \Int_{(g,1)}\big(\widehat{\rho_1}(1,\si)\big) = (g,1)(1,\si)(g^{-1},1)=(g\si(g^{-1}),1)$$ so $\si(g)=g$ and $\rho_1$ and $\rho_2$ are in fact $G^{\Si}$-conjugate, which means that the map \eqref{equivariant_rep_bijection} is injective.
\end{proof}

\noindent Note that the general case, where possibly $H^1(\Si;G)\neq\{1\}$, is not much more difficult to handle within the same framework: all homomorphisms of $\Si$-augmentations $\chi:\Ga\rtimes\Si\lra G\rtimes \Si$ still come from $\Si$-equivariant representations $\rho:\Ga\lra G$, provided one adds new actions of $\Si$ on $G$ indexed by elements of $H^1(\Si;G)$ and differing from the original action of $\Si$ on $G$ only by an inner automorphism. Indeed, there is a homeomorphism \begin{equation}\label{general_case_equivariant_rep}
\begin{array}{ccc}
\displaystyle \bigsqcup_{[a]\in H^1(\Si;G)} \Hom(\Ga;G)^{\Si_a} / G^{\Si_a} & \overset{\simeq}{\lra} & \Hom_\Si(\Ga\rtimes\Si;G\rtimes\Si) / G \\
 {[}\rho{]} & \lmt & {[}\widehat{\rho}: (\ga,\si) \lmt (\rho(\ga)a_\si,\si){]}
 \end{array}
 \end{equation} generalizing \eqref{equivariant_rep_bijection}, where the notation $\Si_a$ means that $\Si$ acts on $G$ via $\si\cdot g := \Int_{a_\si}\,\si(g)$. This is an action because $a$ is a $1$-cocycle and it induces a new action of $\Si$ on $\Hom(\Ga;G)$, defined by $\si\cdot\rho = (\Int_{a_\si}\circ\si)\circ\rho\circ\si^{-1}$, which is compatible with the previous $\Si$-action on $G$ and the $G$-conjugacy action on $\Hom(\Ga;G)$ in the sense that $\si\cdot(g\cdot\rho) = (\si\cdot g)\cdot(\si\cdot\rho)$. In particular, $G^{\Si_a}$ indeed acts on $\Hom(\Ga;G)^{\Si_a}$ for all $[a]\in H^1(\Si;G)$, from which one can prove \eqref{general_case_equivariant_rep} exactly in the same way as Proposition \ref{equivariant_rep}.

In view of Section \ref{construction_of_bundles_section}, Proposition \ref{equivariant_rep} means that, when $M^\si\neq\emptyset$, Real and Quaternionic vector bundles can be constructed from $\Si$-equivariant unitary representations of $\piL$. The point is now to show that we can in fact go through this construction directly, without going back to representations of the groups $\piLR$, thus simplifying the proof of Theorem \ref{constr_of_bdles_prop} when $M^\si\neq\emptyset$. First, since an $x\in M^\si$ has been chosen, the universal covering space $\Mt(x)$, which is the set of paths $\delta$ in $M$ such that $\delta(0)=x$, acquires a canonical Real structure $\delta\lmt\si\circ\delta$, which we will simply denote by $\si(\delta)$. Second, recall that $\pi_1(S(L);\ov{x})$ acts on $\Mt(x)$ via the map $\pi_1(S(L);\ov{x})\lra\piCx\simeq\Aut(\Mt(x)/M)$. Then, given a $\Si$-equivariant representation $\rho:\pi_1(S(L);\ov{x})\lra\U(r)$, we can endow the product bundle $\Mt(x)\times\C^r$ with the Real (resp.\ Quaternionic) structure 
\begin{equation}\label{from_equiv_rep_to_bdles}
\widetilde{\tau}: \begin{array}{ccc} \Mt(x) \times \C^r & \lra & \Mt(x)\times\C^r \\ \big(\delta,v\big) & \lmt & \big(\si(\delta), \si(v)\big) \end{array}\ \mathrm{where}\ \si(v)= \left\{ \begin{array}{cl}
\ov{v} & \mathrm{if}\ \si=\si_\R,\\
J\ov{v} & \mathrm{if}\ \si=\si_\H,
\end{array}\right.
\end{equation} and this Real (resp.\ Quaternionic) structure $\widetilde{\tau}$ is immediately seen to induce a Real (resp.\ Quaternionic) structure $\tau$ on $\cE_\rho=\Mt(x)\times_{\rho}\C^r$. Evidently, $\U(r)^\Si$-conjugate $\Si$-equivariant representations $\rho$ give rise to isomorphic Real (resp.\ Quaternionic) bundles. By Remark \ref{generalized_orbifold_rep_space}, if $L=L_0=M\times\C$, then Proposition \ref{equivariant_rep} says that, when $M^\si\neq\emptyset$, $\Hom_{\Si}(\piR;\U(r)\rtimes\Si) / \U(r) \simeq \Hom(\piC;\U(r))^\Si / \U(r)^\Si.$

\section{Moduli of Real and Quaternionic holomorphic vector bundles}\label{moduli_of_real_and_quat_bundles}

\subsection{Stability}\label{stability_Section}

Let $\cE$ be a holomorphic vector bundle over a Klein surface $(M,\si)$. We denote by $\si(\cE)$ the holomorphic vector bundle $\os{\cE}$, explicitly defined as follows. Let $(U_i)_{i\in I}$ be a covering of $M$ by open sets which are trivializing for $\cE$. By considering the trivializing open sets $U_{\si(i)}:=\si(U_i)$ and enlarging $I$ if necessary, we may assume that $\Si$ acts on $I$. If $\cE$ is represented by the holomorphic $1$-cocycle $g_{ij}: U_i\cap U_j \lra \GL(r;\C)$ then $\cEs$ is represented by the holomorphic $1$-cocycle $\si(g_{ij}):=\ov{g_{ij}\circ\si^{-1}}$. Since $\deg(\cEs)=\deg(\cE)$, this defines a left action of $\Si=\Gal(\C/\R)\simeq \{1;\si\}$ on the set of isomorphism classes of holomorphic vector bundles of rank $r$ and degree $d$ on $M$. Moreover, if $\phi:\cE_1\lra \cE_2$ is a homomorphism of holomorphic vector bundles, then there is a well-defined homomorphism of holomorphic vector bundles $\si(\phi):\si(\cE_1) \lra \si(\cE_2)$. By definition of $\cEs=\os{\cE}$, there is always an anti-holomorphic map $\si_{\cE}: \cE \lra \cEs$ such that the diagram

\begin{equation}\label{pullback_map} 
\begin{CD}
\cE @>{\si_{\cE}}>> \cEs \\ 
@VVV @VVV \\
M @>{\si}>> M
\end{CD}
 \end{equation}

\noindent is commutative and $\si_{\cE}$ is fiberwise $\C$-anti-linear. If there exists a ($\C$-linear) isomorphism of holomorphic vector bundles $\phi_\si:\cEs\lra E$ (over $\Id_M$ on the base),


\noindent in which case we say that $\cE$ is self-conjugate (for instance, Real and Quaternionic bundles are self-conjugate and so is the direct sum of a Real and a Quaternionic bundle), then the anti-holomorphic map \begin{equation}\label{real_or_quat_structure_on_self_conjugate_bundles} \tau:= \phis \si_{\cE}\end{equation} satisfies conditions (1) and (2) of Definition \ref{def_real_and_quat_bundles}. The pair $(\cE,\tau)$ thus defined is a Real (resp.\ Quaternionic) vector bundle if and only if $\si(\phis)=\phis^{-1}$ (resp.\ $\si(\phis)=-\phis^{-1}$), as follows from the commutativity of the diagram

$$
\xymatrix{
\cE \ar[d] \ar[r]^{\si_{\cE}} & \cEs \ar[d] \ar[r]^{\phi_\si} & \cE \ar[ld] \\
M \ar[r]^{\si} & M &
}
$$

\noindent and from the fact that $\tau^2= (\phi_\si \si_\cE) (\phi_\si \si_\cE) = \phi_\si \si(\phi_\si)$.

The slope of a non-zero complex vector bundle $\cE$ on a compact connected Riemann surface $M$ is the ratio $\mu(\cE)=\frac{\deg(\cE)}{\rk(\cE)}\in\mathbb{Q}$ of its degree by its rank. Let us recall the various notions of slope stability for Real and Quaternionic vector bundles over Klein surfaces (\cite{Wang_NS,Sch_JSG}).

\begin{definition}[Stability conditions for Real and Quaternionic vector bundles]\label{def_stability_cond}
Let $(\cE,\tau)$ be a non-zero Real (resp.\ Quaternionic) vector bundle on $(M,\sigma)$. We call a sub-bundle of $\cE$ non-trivial if it is distinct from $0$ and $\cE$. Then $(\cE,\tau)$ is said to be: 
\begin{enumerate}
\item stable if, for all non-trivial, $\tau$-invariant sub-bundles $\cF \subset \cE$, the slope stability condition $\mu(\cF) < \mu(\cE)$ is satisfied.
\item semi-stable if, for all non-trivial, $\tau$-invariant sub-bundles $\cF \subset \cE$, one has $\mu(\cF) \leq \mu(\cE).$
\item geometrically stable if the underlying holomorphic bundle $\cE$ is stable, that is, if, for all non-trivial sub-bundles $\cF \subset \cE$,  one has $\mu(\cF) < \mu(\cE).$
\item geometrically semi-stable, if the underlying holomorphic bundle $\cE$ is semi-stable, that is, if for all non-trivial sub-bundles $\cF \subset \cE$,  one has $\mu(\cF) \leq \mu(\cE).$
\end{enumerate}
\end{definition}

\noindent Condition (1) for Real vector bundles was first studied by Wang in \cite{Wang_NS}. Evidently, (3) implies (1) and (4) implies (2). Given a non geometrically semi-stable vector bundle $\cE$, the existence of a unique sub-bundle of maximal rank among sub-bundles of maximal slope (this sub-bundle is called the destabilizing bundle of $\cE$) shows that, actually, (2) implies (4). Indeed, the destabilizing bundle $\cF$ of a Real (resp.\ Quaternionic) bundle $(\cE,\tau)$ satisfies $\phis^{-1}(\si(\cF))=\cF$ by uniqueness of the destabilizing bundle, so $\cF$ is necessarily $\tau$-invariant, contradicting semi-stability in the Real (resp.\ Quaternionic) sense. Thus, being semi-stable as a Real (resp.\ Quaternionic) bundle is equivalent to being geometrically semi-stable and Real (resp.\ Quaternionic). Being stable as a Real (resp.\ Quaternionic) bundle, in contrast, is not equivalent to being both geometrically stable and Real (resp.\ Quaternionic). We can, nonetheless, completely characterize all stable Real (resp.\ Quaternionic) vector bundles.

\begin{proposition}[\cite{Sch_JSG}]\label{charac_of_stability}
Let $(\cE,\tau)$ be a stable Real (resp.\ Quaternionic) vector bundle. Then either $\cE$ is geometrically stable or there exists a stable holomorphic vector bundle $\cF$ such that $$(\cE,\tau) \simeq \left( \cF\oplus \si(\cF),\ \tau_+:=\begin{pmatrix} 0 & \si_{\cF}^{-1} \\ \si_{\cF} & 0 \end{pmatrix} \right),$$ resp.\ $$(\cE,\tau) \simeq \left( \cF\oplus \si(\cF), \ \tau_-:=\begin{pmatrix} 0 & -\si_{\cF}^{-1} \\ \si_{\cF} & 0 \end{pmatrix} \right),$$ as a Real (resp.\ Quaternionic) vector bundle, where $\si_{\cF}:\cF \lra \si(\cF)$ is the map defined in \eqref{pullback_map}. Moreover, in the Real case one has $\si(\cF)\not\simeq\cF$.\\ If $(E,\tau)$ is geometrically stable then $\End(\cE)\simeq\R$ and otherwise $$\End(\cE)\simeq \left\{ \begin{pmatrix} \la & \\ & \ov{\la} \end{pmatrix} : \la\in\C \right\} \simeq \C$$ as $\R$-algebras.
\end{proposition}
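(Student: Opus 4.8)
The plan is to split off the geometrically stable case and, in its complement, to manufacture the splitting $\cE\simeq\cF\oplus\si(\cF)$ out of the interaction between $\tau$ and the Jordan--Hölder structure of $\cE$ as a semi-stable holomorphic bundle. First I would record that a bundle which is stable in the Real (resp.\ Quaternionic) sense is in particular semi-stable, hence, by the discussion following Definition~\ref{def_stability_cond}, geometrically semi-stable. If $\cE$ is actually geometrically stable we are in the first alternative, and the endomorphism claim is immediate: a stable holomorphic bundle is simple, so every holomorphic endomorphism is a scalar $\la\,\Id_{\cE}$, and such a map commutes with the fiberwise $\C$-antilinear $\tau$ precisely when $\la=\ov{\la}$ (since $\tau(\la v)=\ov{\la}\,\tau(v)$); thus $\End(\cE)\simeq\R$.

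Assume henceforth that $\cE$ is geometrically semi-stable but not geometrically stable, and set $\mu:=\mu(\cE)$. I would pick a holomorphic sub-bundle $\cF\subset\cE$ of slope $\mu$ of minimal rank among all such; minimality forces $\cF$ to be stable, and it is proper since $\cE$ is strictly semi-stable. As $\cF$ has slope $\mu=\mu(\cE)$ while $(\cE,\tau)$ is stable in the Real (resp.\ Quaternionic) sense, $\cF$ cannot be $\tau$-invariant. Because $\tau$ is $\C$-antilinear and lies over $\si$, its image $\cG:=\tau(\cF)$ is a holomorphic sub-bundle of $\cE$ isomorphic to $\si(\cF)$ (concretely $\cG=\phi_\si(\si_{\cE}(\cF))$ in the notation of \eqref{pullback_map} and \eqref{real_or_quat_structure_on_self_conjugate_bundles}), again stable of slope $\mu$. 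The key observation is that $\cF\cap\cG$ and $\cF+\cG$ are both $\tau$-invariant: since $\tau^2=c\,\Id_{\cE}$, one has $\tau(\cF)=\cG$ and $\tau(\cG)=\tau^2(\cF)=\cF$, so $\tau$ merely interchanges the two summands. Working in the abelian category of semi-stable bundles of slope $\mu$, both $\cF\cap\cG$ and $\cF+\cG$ are sub-objects, hence sub-bundles of slope $\mu$. Stability then forces the proper $\tau$-invariant sub-bundle $\cF\cap\cG$ to vanish and the $\tau$-invariant sub-bundle $\cF+\cG$ to be all of $\cE$; therefore $\cE=\cF\oplus\cG\simeq\cF\oplus\si(\cF)$.

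It then remains to normalise $\tau$ and to pin down the endomorphism algebra. In the block decomposition attached to $\cF\oplus\cG$ the map $\tau$ is off-diagonal, with components $a\colon\cF\to\cG$ and $b\colon\cG\to\cF$ satisfying $ba=c\,\Id_{\cF}$; identifying $\cG$ with $\si(\cF)$ through $\phi_\si$ so that $a$ becomes the canonical map $\si_{\cF}$ of \eqref{pullback_map}, the relation $ba=c\,\Id_{\cF}$ forces $b=c\,\si_{\cF}^{-1}$, which is exactly $\tau_+$ when $c=1$ and $\tau_-$ when $c=-1$. To obtain $\si(\cF)\not\simeq\cF$ in the Real case, I would argue by contradiction: a holomorphic isomorphism $\cF\xrightarrow{\sim}\si(\cF)$ makes the simple bundle $\cF$ self-conjugate, hence of Real or Quaternionic type by Proposition~\ref{self_conj_simple_bundle}, and from such an isomorphism one constructs a ``graph'' sub-bundle of $\cF\oplus\si(\cF)$, isomorphic to $\cF$ and so of slope $\mu$, which is $\tau$-invariant exactly when the type of $\cF$ matches $c$, contradicting stability. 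Knowing $\si(\cF)\not\simeq\cF$, stability of equal slope gives $\Hom(\cF,\si(\cF))=0$, so $\End_{\mathrm{hol}}(\cF\oplus\si(\cF))$ is diagonal and $\simeq\C\oplus\C$; imposing commutation with $\tau$ and again using the antilinearity of $\si_{\cF}$ cuts this down to the pairs $(\la,\ov{\la})$, i.e.\ $\End(\cE)\simeq\C$.

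The routine parts are the reduction to the non-geometrically-stable case and the block normalisation of $\tau$. The steps I expect to require the most care are the production of the splitting---checking that $\cF\cap\cG$ and $\cF+\cG$ have slope exactly $\mu$, which is precisely where the abelian structure of the slope-$\mu$ semi-stable category is used---and, above all, the self-conjugacy step, where one must convert an abstract isomorphism $\cF\simeq\si(\cF)$ into a concrete $\tau$-invariant destabilising sub-bundle and keep careful track of the sign $c$ that distinguishes the Real from the Quaternionic type.
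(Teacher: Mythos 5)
The paper offers no proof of this proposition (it is quoted verbatim from \cite{Sch_JSG}), so your argument has to stand on its own. Up to and including the block normalisation of $\tau$ it does, and it is the standard argument: reduction to the geometrically semi-stable case, choice of a minimal-rank slope-$\mu$ sub-bundle $\cF$ (necessarily stable, necessarily not $\tau$-invariant), the observation that $\tau$ interchanges $\cF$ and $\cG=\tau(\cF)\simeq\si(\cF)$ so that $\cF\cap\cG$ and $\cF+\cG$ are $\tau$-invariant sub-objects of the abelian category of slope-$\mu$ semi-stable bundles, hence forced to be $0$ and $\cE$ respectively, and finally $ba=c\,\Id_{\cF}$ giving $\tau_{\pm}$. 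All of that is fine.

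The gap is in the last step, and your own bookkeeping exposes it. Given $\psi:\si(\cF)\overset{\simeq}{\lra}\cF$, write $(\psi\circ\si_{\cF})^2=\mu\,\Id_{\cF}$ with $\mu\in\R^*$; the sign of $\mu$ is the type of $\cF$ in the sense of Proposition \ref{self_conj_simple_bundle}. The graph sub-bundles $V_{[\lambda:\nu]}=\{(\lambda v,\nu\,\psi^{-1}(v))\}$ exhaust the proper non-zero slope-$\mu$ sub-bundles of $\cF\oplus\si(\cF)$, and the invariance condition under $\tau_c$ works out to $c\,|\nu/\lambda|^2=\mu$, which is solvable if and only if $\mathrm{sign}(\mu)=c$ --- exactly your ``the type of $\cF$ matches $c$''. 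But when the types \emph{disagree} there is no invariant graph, hence no contradiction with stability, and your argument simply stops; you cannot conclude $\si(\cF)\not\simeq\cF$. This is not a repairable oversight, because in the mismatched case the conclusion genuinely fails. Take $M^\si=\emptyset$ and a Quaternionic line bundle $(\cL,\theta)$, $\theta^2=-\Id$ (these exist for suitable $g$ and $d$ by Theorem \ref{top_classif}). Then $\tau=\begin{pmatrix}0&-\theta\\ \theta&0\end{pmatrix}$ on $\cE=\cL\oplus\cL$ satisfies $\tau^2=\Id_{\cE}$, and the invariance condition for $V_{[\lambda:\nu]}$ becomes $|\lambda|^2=-|\nu|^2$, which is impossible; so $(\cE,\tau)$ is a stable Real bundle, not geometrically stable, with $\cE\simeq\cL\oplus\si(\cL)$ and $\si(\cL)\simeq\cL$. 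A direct computation shows that its $\tau$-commuting endomorphisms form the algebra $\left\{\begin{pmatrix}a&b\\-\ov{b}&\ov{a}\end{pmatrix}:a,b\in\C\right\}\simeq\H$, not $\C$. So the ``Moreover'' clause and the endomorphism dichotomy require either an additional hypothesis or a third alternative (the $\End\simeq\H$ case of Galois descent, which also affects your $\End(\cE)\simeq\C$ computation in the Quaternionic case, where you silently assume $\Hom(\cF,\si(\cF))=0$ without having established $\si(\cF)\not\simeq\cF$ there). You should flag this discrepancy with the cited statement rather than paper over the mismatched-type case.
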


\noindent So, there are stable Real (resp.\ Quaternionic) bundles which are not geometrically stable (they are, however, poly-stable in the complex sense). This is actually a good thing because it gives enough stable objects for the following result to hold.

\begin{theorem}[\cite{Sch_JSG}]\label{JH_filt}
Let $(\cE,\tau)$ be a semi-stable Real (resp.\ Quaternionic) vector bundle. Then there exists a filtration $0=\cF_0 \subset \cF_1 \subset \ldots \subset \cF_k=\cE$ by $\tau$-invariant holomorphic sub-bundles such that:
\begin{enumerate}
\item for all $i\in\{1;\,\ldots;k\}$, the Real (resp.\ Quaternionic) bundle $(\cF_i/\cF_{i-1},\tau_i)$ is stable with respect to the Real (resp.\ Quaternionic) structure $\tau_i$ induced by $\tau$,
\item for all $i\in\{1;\,\ldots;k\}$, $\mu(\cF_i/\cF_{i-1}) = \mu(\cE).$
\end{enumerate}
\noindent Moreover, the graded isomorphism class of the associated graded Real (resp.\ Quaternionic) object $\left(\bigoplus_{i=1}^k \cF_i/\cF_{i-1}, \ \bigoplus_{i=1}^k \tau_i\right)$ is independent of the choice of the filtration. We denote it by $\gr(\cE,\tau)$.
\end{theorem}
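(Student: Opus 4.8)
The plan is to follow the classical Jordan--Hölder strategy, carried out $\tau$-equivariantly throughout, using the observation (established before Proposition \ref{charac_of_stability}) that being semi-stable as a Real (resp.\ Quaternionic) bundle is the same as being geometrically semi-stable and Real (resp.\ Quaternionic). For \emph{existence} I would induct on $\rk\cE$. Since $\mu(\cE)$ is then the maximal slope attained by any $\tau$-invariant subbundle, I would choose $\cF_1\subset\cE$ to be a nonzero $\tau$-invariant subbundle with $\mu(\cF_1)=\mu(\cE)$ of \emph{minimal rank} among all such. Then $(\cF_1,\tau_1)$ is stable as a Real (resp.\ Quaternionic) bundle: any nontrivial $\tau_1$-invariant subbundle $\cG\subset\cF_1$ is also $\tau$-invariant in $\cE$, so $\mu(\cG)\leq\mu(\cE)=\mu(\cF_1)$, and equality would produce a $\tau$-invariant subbundle of slope $\mu(\cE)$ of smaller rank than $\cF_1$, contradicting minimality; hence $\mu(\cG)<\mu(\cF_1)$.

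Next I would verify that the quotient $\cE/\cF_1$ inherits a Real (resp.\ Quaternionic) structure $\tau'$: because $\tau$ preserves $\cF_1$ it descends, stays anti-holomorphic over $\si$ and fiberwise conjugate-linear, and satisfies $\tau'^2=+\Id$ (resp.\ $-\Id$), exactly as $\tau$ does. Since $\mu(\cF_1)=\mu(\cE)$ one has $\mu(\cE/\cF_1)=\mu(\cE)$, and $(\cE/\cF_1,\tau')$ is again semi-stable in the Real (resp.\ Quaternionic) sense, for the preimage in $\cE$ of a $\tau'$-invariant destabilizing subbundle of $\cE/\cF_1$ would be a $\tau$-invariant subbundle of $\cE$ of slope strictly greater than $\mu(\cE)$, contradicting semi-stability. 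By the induction hypothesis $(\cE/\cF_1,\tau')$ admits a filtration with stable Real (resp.\ Quaternionic) quotients of slope $\mu(\cE)$; pulling it back to $\cE$ and prepending $\cF_1$ produces the required filtration $0=\cF_0\subset\cF_1\subset\cdots\subset\cF_k=\cE$.

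For \emph{uniqueness} of $\gr(\cE,\tau)$, the essential input is a Schur-type lemma: a nonzero homomorphism $\phi\colon(\cE_1,\tau_1)\to(\cE_2,\tau_2)$ of Real (resp.\ Quaternionic) bundles, both stable of the same slope $\mu$, is an isomorphism. Indeed $\ker\phi$ and the saturation of $\Im\phi$ are $\tau$-invariant subbundles (as $\phi$ commutes with the structures), so the slope inequalities coming from stability force $\ker\phi=0$ and $\Im\phi=\cE_2$. Together with Proposition \ref{charac_of_stability}, which identifies $\End(\cE,\tau)$ with the division algebra $\R$ or $\C$, this exhibits the stable Real (resp.\ Quaternionic) bundles of slope $\mu$ as the simple objects of a finite-length $\R$-linear abelian category, namely the category of semi-stable Real (resp.\ Quaternionic) bundles of slope $\mu$ with $\tau$-compatible morphisms. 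Granting this, I would conclude either by the abstract Jordan--Hölder theorem in that category, or by the usual direct induction: given a second such filtration $\{\cF'_j\}$, take $j$ minimal with $\cF_1\subset\cF'_j$, observe that the induced map $\cF_1\to\cF'_j/\cF'_{j-1}$ is nonzero hence an isomorphism, split off this factor, and induct on $\rk\cE$.

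The main obstacle I anticipate is the verification that this equivariant category is genuinely abelian and of finite length: one must check that kernels, images and cokernels of $\tau$-compatible maps between semi-stable slope-$\mu$ Real (resp.\ Quaternionic) bundles are themselves semi-stable of slope $\mu$ and carry the induced structures, so that the simple objects are precisely the stable Real (resp.\ Quaternionic) bundles and not some larger class. This bookkeeping is delicate because, as stressed before Proposition \ref{charac_of_stability}, a stable Real (resp.\ Quaternionic) bundle need not be geometrically stable and may have $\End\simeq\C$ rather than $\R$; one must keep track of this throughout so that the Schur lemma and the length argument remain valid for both types of simple object.
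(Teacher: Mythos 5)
Your proposal is correct, and it is the standard equivariant Jordan--H\"older argument: minimal-rank $\tau$-invariant subbundle of maximal slope for existence, a $\tau$-compatible Schur lemma plus Proposition \ref{charac_of_stability} for uniqueness of the graded object. Note that the paper itself offers no proof of Theorem \ref{JH_filt} --- it is imported from \cite{Sch_JSG} --- so there is no internal argument to compare against; your reduction to the abelian, finite-length category of semi-stable Real (resp.\ Quaternionic) bundles of fixed slope, with the stable ones as simple objects (which is legitimate precisely because semi-stability in the Real sense coincides with geometric semi-stability, so kernels and saturated images of $\tau$-compatible maps stay in the category), is exactly the intended route.
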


\noindent A filtration of $(\cE,\tau)$ satisfying the conditions of Theorem \ref{JH_filt} is called a Real (resp.\ Quaternionic) Jordan-H\"older filtration. It is not, in general, a Jordan-H\"older filtration of the holomorphic vector bundle $\cE$: for instance, $(\cF\oplus\si(\cF),\tau_\pm)$ is stable in the Real (resp.\ Quaternionic) sense, but its holomorphic Jordan-H\"older filtration have length $2$ (this example also shows that it is not true in general that a semi-stable Real (resp.\ Quaternionic) vector bundle admits a filtration by $\tau$-invariant sub-bundles the successive quotients of which are geometrically stable). The following definitions are inspired from the classical holomorphic case (\cite{Seshadri}).

\begin{definition}[$S$-equivalence, \cite{Sch_JSG}]
Two semi-stable Real (resp.\ Quaternionic) vector bundles $(\cE_1,\tau_1)$ and $(\cE_2,\tau_2)$ are called $S$-equivalent in the Real (resp.\ Quaternionic) sense if $\gr(\cE_1,\tau_1) = \gr(\cE_2,\tau_2)$, i.e.\ if the graded bundles associated to any choice of Real (resp.\ Quaternionic) Jordan-H\"older filtrations are isomorphic as graded Real (resp.\ Quaternionic) vector bundles.
\end{definition}

\begin{definition}[Poly-stable Real and Quaternionic vector bundles, \cite{Sch_JSG}]\label{ps_objects}
Let $(\cE,\tau)$ be a Real (resp.\ Quaternionic) vector bundle. Then $(\cE,\tau)$ is called poly-stable if it is isomorphic, as a Real (resp.\ Quaternionic) vector bundle, to a direct sum $(\cE_1\oplus \ldots \oplus \cE_k,\tau_1\oplus\ldots \oplus \tau_k)$ of stable Real (resp.\ Quaternionic) vector bundles satisfying $\mu(\cE_1)=\mu(\cE_2)= \ldots = \mu(\cE_k).$ This implies that $\cE$ is semi-stable and that $\mu(\cE)=\mu(\cE_i)$ for all $i\in\{1;\,\ldots;k\}$.
\end{definition}

\noindent For instance, any graded object in the Real (resp.\ Quaternionic) $S$-equivalence class of a semi-stable Real (resp.\ Quaternionic) bundle is poly-stable in the sense of Definition \ref{ps_objects}. The next result follows almost directly from Proposition \ref{charac_of_stability}.

\begin{proposition}[\cite{Sch_JSG}]\label{ps_and_tau_comp}
Let $(\cE,\tau)$ be a Real (resp.\ Quaternionic) vector bundle. Then $(\cE,\tau)$ is poly-stable in the Real (resp.\ Quaternionic) sense if and only if it is poly-stable in the complex sense. In other words, a poly-stable Real (resp.\ Quaternionic) vector bundle is a holomorphic vector bundle which is both poly-stable and Real (resp.\ Quaternionic).
\end{proposition}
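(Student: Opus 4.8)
The statement is an equivalence, so I would treat the two implications separately, the forward one being immediate and the converse carrying the real content.

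For the implication \emph{poly-stable in the Real (resp.\ Quaternionic) sense $\Rightarrow$ poly-stable in the complex sense}, I would simply unwind Definition \ref{ps_objects} and apply Proposition \ref{charac_of_stability} factor by factor. Writing $(\cE,\tau)\simeq\bigoplus_i(\cE_i,\tau_i)$ with each $(\cE_i,\tau_i)$ stable Real (resp.\ Quaternionic) of slope $\mu:=\mu(\cE)$, Proposition \ref{charac_of_stability} tells us that each $\cE_i$ is either geometrically stable or of the form $\cF_i\oplus\si(\cF_i)$ with $\cF_i$ stable holomorphic. Since $\deg$ and $\rk$ are preserved by $\si$, one has $\mu(\si(\cF_i))=\mu(\cF_i)=\mu$, so in either case $\cE_i$ is a direct sum of stable holomorphic bundles all of slope $\mu$. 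Hence $\cE=\bigoplus_i\cE_i$ is complex poly-stable, as wanted.

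For the converse, assume $\cE$ is complex poly-stable and carries a Real (resp.\ Quaternionic) structure $\tau$. The plan is to argue by induction on $\rk\cE$, peeling off one stable Real (resp.\ Quaternionic) direct summand at a time, and this requires two ingredients. First, I would produce a $\tau$-invariant stable Real (resp.\ Quaternionic) sub-bundle by choosing, among all nonzero $\tau$-invariant sub-bundles of slope $\mu$ (a family which is nonempty since it contains $\cE$), one $\cF$ of minimal rank: any nonzero proper $\tau$-invariant $\cG\subsetneq\cF$ is a sub-bundle of the slope-$\mu$ semistable bundle $\cE$, hence satisfies $\mu(\cG)\leq\mu$, and minimality forbids $\mu(\cG)=\mu$, so $\mu(\cG)<\mu$, which is exactly Definition \ref{def_stability_cond}(1) for $(\cF,\tau|_\cF)$. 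Second, since every sub-bundle of $\cF$ has slope $\leq\mu$, the bundle $\cF$ is semistable of slope $\mu$, hence an object of the abelian category of slope-$\mu$ semistable bundles in which $\cE$ is a semisimple object; thus $\cF$ is a holomorphic direct summand of $\cE$.

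The crux is to upgrade this complex splitting to a $\tau$-invariant one. Starting from a holomorphic projection $p\colon\cE\to\cE$ with image $\cF$, I would set $q:=\tau\,p\,\tau^{-1}$ and $p':=\tfrac12(p+q)$. Here $q$ is again holomorphic, being a composite of an anti-holomorphic, a holomorphic, and an anti-holomorphic map, and, using that $\cF$ is $\tau$-invariant, $q$ is a projection with image $\cF$; the relations $pq=q$ and $qp=p$ then give $(p+q)^2=2(p+q)$, so $p'$ is an idempotent with image $\cF$, while $\tau^2=\pm\Id_{\cE}$ being central yields $\tau\,p'\,\tau^{-1}=p'$. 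Consequently $\ker p'$ is a $\tau$-invariant holomorphic complement, so $\cE=\cF\oplus\ker p'$ as Real (resp.\ Quaternionic) bundles; as $\ker p'$ is again complex poly-stable with induced structure, the induction closes and the simple factors it produces are exactly those listed in Proposition \ref{charac_of_stability}. I expect the main obstacle to be precisely this symmetrization step—producing a holomorphic, rather than merely smooth or metric, $\tau$-invariant complement—together with the bookkeeping on self-conjugate isotypic components, where Proposition \ref{charac_of_stability} is what distinguishes the geometrically stable Real summands from the Quaternionic pieces with $\si(\cF)\simeq\cF$.
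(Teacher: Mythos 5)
Your proof is correct, but for the converse it takes a genuinely different route from the paper. The paper's own argument for ``complex poly-stable $\Rightarrow$ poly-stable as a Real (resp.\ Quaternionic) bundle'' inducts on the number $k$ of stable summands in a complex decomposition $\cE\simeq\cE_1\oplus\ldots\oplus\cE_k$ and analyzes the induced action of $\si$ on the isomorphism classes of the $\cE_i$ (either $\si(\cE_i)\simeq\cE_i$ or $\si$ exchanges two summands), then invokes Proposition \ref{charac_of_stability} to recognize the resulting pieces as stable Real (resp.\ Quaternionic) bundles; the cases $k=1,2$ are treated explicitly and the general case is left as ``induction on $k$''. You instead induct on the rank: you extract a $\tau$-invariant sub-bundle $\cF$ of slope $\mu(\cE)$ of minimal rank, observe that minimality forces $\mu(\cG)<\mu(\cE)$ for every non-trivial $\tau$-invariant $\cG\subsetneq\cF$ (so $(\cF,\tau|_\cF)$ is stable in the sense of Definition \ref{def_stability_cond}(1)), use semisimplicity of $\cE$ in the abelian category of slope-$\mu$ semistable bundles to get a holomorphic projection $p$ onto $\cF$, and then symmetrize $p'=\tfrac12(p+\tau p\tau^{-1})$ to obtain a $\tau$-invariant holomorphic complement; the identities $pq=q$, $qp=p$ and the centrality of $\tau^2=\pm\Id_\cE$ make this work, and the real coefficient $\tfrac12$ commutes with the anti-linear $\tau$, so $p'$ is indeed $\tau$-equivariant. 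Both arguments are valid. What your version buys is a uniform treatment that never has to track isotypic components or multiplicities (the delicate point in the paper's sketch when some $\cE_i$ occur with multiplicity greater than one, where one must examine the induced structure on the multiplicity space), and it produces the $\tau$-invariant splitting constructively; what the paper's version buys is brevity and a direct link to the classification of stable Real and Quaternionic bundles in Proposition \ref{charac_of_stability}, which identifies exactly which summands arise. Your forward implication coincides with the paper's.
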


\begin{proof}
By Proposition \ref{charac_of_stability}, a poly-stable Real (resp.\ Quaternionic) vector bundle is a direct sum of holomorphic vector bundles of the same slope which are stable in the complex sense, so it is poly-stable in the complex sense. Conversely, if $(\cE,\tau)$ is a Real (resp.\ Quaternionic) vector bundle which is poly-stable in the complex sense, then $\cE \simeq \cE_1\oplus \ldots \oplus \cE_k$ as holomorphic vector bundles, with each $\cE_i$ stable (in the complex sense) and of slope $\mu(\cE)$. If $k=1$, then $(\cE,\tau)$ is geometrically stable. If $k=2$, then $\cE\simeq\si(\cE)$ implies that either $\si(\cE_i)\simeq \cE_i$ for $i=1,2$, or $\si(\cE_1)\simeq \cE_2$. In both cases, $(\cE,\tau)$ is poly-stable (in fact, here, stable) as a Real (resp.\ Quaternionic) vector bundle. The general case follows by induction on $k$.
\end{proof}

In analogy with the construction of Seshadri (\cite{Seshadri}), Theorem \ref{JH_filt} makes it natural to introduce the following moduli sets.

\begin{definition}\label{def_moduli_spaces_Real_Quat_bdles}
We denote by $\ModRd$ the set of Real $S$-equivalence classes of semi-stable Real vector bundles of rank $r\geq 1$ and degree $d\in\Z$ and by $\ModRdw$ the subset of $\ModRd$ consisting of those Real vector bundles $(\cE,\tau)$ satisfying $w_1(\cE^\tau) = w=(s_1,\ldots,s_n)\in(\Z/2\Z)^n$, so $$\ModRd \quad = \bigsqcup_{|\vw|=d\,\mod{2}} \ModRdw,$$ where $|\vw|=s_1+\ldots+s_n$. Likewise, we denote by $\ModHd$ the set of Quaternionic $S$-equivalence classes of semi-stable Quaternionic vector bundles of rank $r\geq 1$ and degree $d\in\Z$.
\end{definition}

We take a moment here to make the following observation: it is not true in general that $\ModRd$ is the set of real points of the variety $\ModCd$ with respect to the Galois action induced by $\cE\lmt\si(\cE)$ (this action indeed takes a semi-stable bundle to a semi-stable one and sends a Jordan-H\"older filtration of $\cE$ to a Jordan-H\"older filtration of $\cEs$). For instance it is not true when $r=2$ and $d=0$, which is of course a case of interest. The point is that the fixed points of $\Si$ in $\ModCd$ do not necessarily come from Real vector bundles: they may come from Quaternionic vector bundles and this phenomenon is exactly due to the presence of non-trivial automorphisms for the objects parametrized by the moduli space $\ModCd$. Also, two Real (resp.\ Quaternionic) vector bundles whose underlying holomorphic vector bundles are isomorphic could be non-isomorphic as Real (resp.\ Quaternionic) vector bundles (this last phenomenon does in fact not occur in the geometrically stable case; see Proposition \ref{moduli_of_geom_stable_Real_and_quat}). To further formalize this, let us restrict our attention to the smooth dense part $\ModCds$ of $\ModCd$ consisting of isomorphism classes of stable holomorphic vector bundles of rank $r$ and degree $d$ (\cite{Mumford_ICM,Seshadri}). A $\Si$-invariant class therein is represented by a self-conjugate stable holomorphic vector bundle $\cE$ on $M$, meaning that there exists a ($\C$-linear) isomorphism of holomorphic vector bundles $\phi_\si:\cEs\overset{\simeq}{\lra}\cE$ (covering the identity map on $M$). Since $\cE$ is stable, it is in particular simple, in the sense that $\Aut(\cE)\simeq \C^*$. Given $\lambda\in\Si=\Gal(\C/\R)\simeq \Z/2\Z$, we set $\phi_\lambda=\Id_\cE$ if $\lambda$ is trivial and $\phi_\lambda=\phis$ if $\lambda$ is the non-trivial element of $\Si$. Then, given $(\lambda,\lambda')\in\Si\times \Si$, let us define the automorphism $u(\lambda,\lambda')$ of $\cE$ by requiring that the diagram

\begin{equation}\label{def_cocycle_diag}
\xymatrix{
(\lambda\lambda')(\cE) \ar[d]^{\phi_{\lambda\lambda'}} \ar[r]^{\lambda(\phi_{\lambda'})} & \lambda(\cE) \ar[r]^{\phi_\lambda} & \cE \\
\cE \ar@{-->}[rru]_{u(\lambda,\lambda')} & &
}
\end{equation}
be commutative. Explicitly, $u(\lambda,\lambda'):=\phi_\lambda \lambda(\phi_{\lambda'}) \phi_{\lambda\lambda'}^{-1}$.

\begin{proposition}\label{self_conj_simple_bundle}
Given a self-conjugate simple bundle $\cE$, the map $u: \Si\times \Si \lra \C^*$ defined in \eqref{def_cocycle_diag} is a $2$-cocycle in Galois cohomology. Consequently, it defines a class $$[u]\in H^2(\Si;\C^*) \simeq \Br(\R)$$ in the Brauer group of $\R$ and this defines a map, that we may call the type map, $$\cT: \ModCds^\Si \lra \Br(\R)=\{\R;\H\}\simeq\Z/2\Z$$ from the Galois-invariant part of $\ModCds$ to the Brauer group of $\R$. If $\cE\in\cT^{-1}(\{\R\})$, then $\cE$ admits a Real structure $\tau$ and, if $\cE\in\cT^{-1}(\{\H\})$, then $\cE$ admits a Quaternionic structure. In particular, a geometrically stable self-conjugate vector bundle is either Real or Quaternionic and cannot be both.
\end{proposition}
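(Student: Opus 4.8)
The plan is to establish in turn that $u$ is a normalized $2$-cocycle, that its cohomology class is well defined on $\ModCds^\Si$, and that this class is exactly the obstruction to the existence of a Real or Quaternionic structure.

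\emph{Step 1: $u$ is a $2$-cocycle.} Since $\cE$ is stable it is simple, so $\Aut(\cE)=\C^*$ and each $u(\lambda,\lambda')=\phi_\lambda\,\lambda(\phi_{\lambda'})\,\phi_{\lambda\lambda'}^{-1}$ is genuinely a scalar. The preliminary point is that the functor $\lambda(-)$ acts on a scalar automorphism $c\,\Id_\cE$ by complex conjugation, $\si(c\,\Id_\cE)=\ov{c}\,\Id_\cE$, which is immediate from the definition $\cEs=\os{\cE}$ at the level of cocycles and identifies the induced action of $\Si$ on $\Aut(\cE)=\C^*$ with the Galois action. I would then compute the triple composite $\phi_\lambda\circ\lambda(\phi_{\lambda'})\circ(\lambda\lambda')(\phi_{\lambda''})\colon(\lambda\lambda'\lambda'')(\cE)\lra\cE$ in two ways, using the defining relation $\phi_\lambda\,\lambda(\phi_{\lambda'})=u(\lambda,\lambda')\,\phi_{\lambda\lambda'}$ and its image under $\lambda(-)$. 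Bracketing the first two factors gives $u(\lambda,\lambda')\,u(\lambda\lambda',\lambda'')\,\phi_{\lambda\lambda'\lambda''}$, bracketing the last two gives $\lambda(u(\lambda',\lambda''))\,u(\lambda,\lambda'\lambda'')\,\phi_{\lambda\lambda'\lambda''}$, and cancelling the invertible $\phi_{\lambda\lambda'\lambda''}$ yields the identity $\lambda(u(\lambda',\lambda''))\,u(\lambda,\lambda'\lambda'')=u(\lambda\lambda',\lambda'')\,u(\lambda,\lambda')$, i.e. $u\in Z^2(\Si;\C^*)$. Choosing $\phi_{1_\Si}=\Id_\cE$ makes $u$ normalized; as $\Si\simeq\Z/2\Z$ it is then determined by $u(\si,\si)=\phi_\si\,\si(\phi_\si)$, and the cocycle identity with $\lambda=\lambda'=\lambda''=\si$ forces $u(\si,\si)=\ov{u(\si,\si)}$, so $u(\si,\si)\in\R^*$.

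\emph{Step 2: the class $[u]$.} Replacing $\phi_\si$ by $c\,\phi_\si$ with $c\in\C^*$ changes $u$ by the coboundary of the $1$-cochain sending $\si$ to $c$, so $[u]$ is independent of the trivialization; transport of structure along an isomorphism $\cE\simeq\cE'$ leaves $u(\si,\si)$ literally unchanged, so $[u]$ depends only on the point of $\ModCds^\Si$. Invoking the standard identification (compare \eqref{obs_on_H2}) $H^2(\Si;\C^*)\simeq(\C^*)^\Si/\{a\si(a):a\in\C^*\}=\R^*/\R_{>0}\simeq\{\pm1\}\simeq\Br(\R)$, the class $[u]$ corresponds to the sign of the real number $u(\si,\si)$; this defines the type map $\cT$, with $\R$ the case $u(\si,\si)>0$ and $\H$ the case $u(\si,\si)<0$.

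\emph{Step 3: realization and the dichotomy.} By \eqref{real_or_quat_structure_on_self_conjugate_bundles} one has $u(\si,\si)=\phi_\si\si(\phi_\si)=\tau^2$ for $\tau=\phis\,\si_\cE$, and the rescaling $\phi_\si\mapsto c\,\phi_\si$ multiplies $u(\si,\si)$ by $|c|^2>0$. Hence, choosing $c$ real, we may normalize $u(\si,\si)$ to $+1$ when $\cT(\cE)=\R$, giving $\si(\phis)=\phis^{-1}$ and thus a Real structure $\tau$, and to $-1$ when $\cT(\cE)=\H$, giving $\si(\phis)=-\phis^{-1}$ and thus a Quaternionic structure; what one can never do is change the sign, which is exactly the content of $[u]$ being a well-defined invariant. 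The two structures cannot coexist because $\cT(\cE)$ is a single element of the two-element set $\Br(\R)$; concretely, a Real $\tau_+$ and a Quaternionic $\tau_-$ would make $\tau_+\tau_-^{-1}$ a $\C$-linear automorphism, hence a scalar $c$, and comparing squares of $\tau_+=c\,\tau_-$ would force $-|c|^2=1$. I expect the only delicate bookkeeping to be the two-fold computation of the triple composite in Step 1, in particular tracking the conjugation action of $\lambda(-)$ on scalars; once this is secured, every remaining step is formal.
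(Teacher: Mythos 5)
Your proposal is correct and follows essentially the same route as the paper's proof: verify the cocycle identity (which the paper leaves as ``a simple computation'' and you carry out explicitly via the two bracketings of the triple composite), observe that $[u]$ is independent of the choice of $\phi_\si$, and realize the Real or Quaternionic structure by rescaling $\phi_\si$ so that $\si(\phi_\si)=\pm\phi_\si^{-1}$ --- your real rescaling $c$ is exactly the coboundary $a(\si)$ in the paper's argument, and your concrete identification of $[u]$ with the sign of $u(\si,\si)\in\R^*$ is just the explicit form of $H^2(\Si;\C^*)\simeq\R^*/\R_{>0}$. The only addition is your explicit argument for the Real/Quaternionic dichotomy via $\tau_+=c\,\tau_-$, which the paper leaves implicit in the well-definedness of $[u]$; it is a correct and welcome supplement.
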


\begin{proof}
Recall that $u(\lambda,\lambda')= \phi_\lambda \lambda(\phi_{\lambda'}) \phi_{\lambda\lambda'}^{-1}$, where $\phi_\lambda$ is, for all $\lambda\in\Si$, an isomorphism between $\lambda(\cE)$ and $\cE$. That $u$ is a $2$-cocycle follows from a simple computation and the associated cohomology class $[u]$ does not depend on the choice of the family $(\phi_\lambda)_{\lambda\in\Si}$. If $[u]=1$ in $H^2(\Si;\C^*)\simeq\Br(\R)\simeq\{\pm 1\}$, then $u$ is a coboundary: $u(\lambda,\lambda') = a(\lambda\lambda') a(\lambda)(\lambda\cdot a(\lambda'))^{-1}$, where $a:\Si\lra \Aut(\cE)$ and $\lambda\cdot a(\lambda') = \phi_{\lambda}\lambda(a(\lambda')) \phi_{\lambda}^{-1}$ (this does not depend on the choice of the isomorphism $\phi_{\lambda}:\lambda(\cE)\lra\cE$). Setting $\psi_{\lambda}:=a(\lambda)\phi_{\lambda}$ yields $\psi_{\lambda}\lambda(\psi_{\lambda'}) = \psi_{\lambda\lambda'}$. In particular, if $\lambda=\lambda'=\si$, then $\si(\psi_{\si})=\psi_\si^{-1}$. Likewise, $[u]=-1$ yields $\psi_{\si}^{\si}=-\psi_\si^{-1}$. If we set $\tau:=\psi_\si\si_{\cE}$, where $\si_{\cE}$ is defined as in \eqref{pullback_map}, then $\tau^2 = \psi_\si\si_{\cE} \psi_\si\si_{\cE} = \psi_\si \si(\psi_\si) = \pm \Id_{\cE}$.
\end{proof}
The lesson from Proposition \ref{self_conj_simple_bundle} is that there may be $\Gal(\C/\R)$-invariant stable holomorphic vector bundles that do not come from Real vector bundles. This situation contrasts with the one studied by Harder and Narasimhan in \cite{HN}, where all $\Gal(\ov{\mathbb{F}_q}/\mathbb{F}_q)$-invariant bundles are necessarily defined over $\mathbb{F}_q$, reflecting the fact that $\Br(\mathbb{F}_q)$, unlike $\Br(\R)$, is trivial. To conclude the present section, let us identify the fibers of the type map $\cB$ with moduli spaces of Real and Quaternionic vector bundles. We define $\ModRds$, resp. $\ModHds$, to be the set of isomorphism classes of geometrically stable Real, resp. Quaternionic, holomorphic  vector bundles of rank $r$ and degree $d$. By Proposition \ref{self_conj_simple_bundle}, there are surjective maps $$\ModRds\lra\cB^{-1}(\{\R\})\quad\mathrm{and}\quad \ModHds\lra\cB^{-1}(\{\H\}),$$ which in particular endows $\ModRds$ and $\ModHds$ with a natural topology that they inherit from the Hausdorff topology of the complex variety $\ModCds$. In fact, these maps are also injective, as shown by the next result.

\begin{proposition}\label{moduli_of_geom_stable_Real_and_quat}
Two geometrically stable Real (resp. Quaternionic) vector bundles $(\cE_1,\tau_1)$ and $(\cE_2,\tau_2)$ are isomorphic as Real (resp. Quaternionic) vector bundles if and only if they are isomorphic as holomorphic vector bundles. As a consequence, there are bijections $\ModRds\simeq\cB^{-1}(\{\R\})$ and $\ModHds\simeq\cB^{-1}(\{\H\})$.
\end{proposition}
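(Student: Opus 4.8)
The \emph{only if} direction is immediate, since an isomorphism of Real (resp.\ Quaternionic) vector bundles is in particular an isomorphism of the underlying holomorphic bundles. For the converse, the plan is to start from a holomorphic isomorphism $\phi:\cE_1\lra\cE_2$ and to correct it by a scalar so that it intertwines $\tau_1$ and $\tau_2$. Writing $c=+1$ in the Real case and $c=-1$ in the Quaternionic case, so that $\tau_i^2=c\,\Id$, the first observation I would make is that the composite $\psi:=\tau_2\circ\phi\circ\tau_1$ is again a $\C$-linear holomorphic isomorphism $\cE_1\lra\cE_2$ covering $\Id_M$: the two anti-holomorphic factors $\tau_1$ and $\tau_2$ compensate each other, both at the level of base maps (since $\si\circ\si=\Id_M$) and at the level of fiberwise $\C$-linearity (two anti-linear factors and one linear factor compose to a linear map).

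The essential input is geometric stability. Since $\cE_1$ and $\cE_2$ are \emph{stable} holomorphic bundles of the same slope which are holomorphically isomorphic, the space $\Hom(\cE_1,\cE_2)$ is one-dimensional over $\C$ (a nonzero morphism between stable bundles of equal slope is an isomorphism, and $\End(\cE_1)\simeq\C$). Both $\phi$ and $\psi$ lie in this line, so there exists $\la\in\C^*$ with $\tau_2\circ\phi\circ\tau_1=\la\,\phi$. Applying the operator $\tau_2\circ(-)\circ\tau_1$ once more to this relation and using $\tau_1^2=\tau_2^2=c\,\Id$ with $c^2=1$, the left-hand side becomes $\tau_2^2\circ\phi\circ\tau_1^2=\phi$ while the right-hand side becomes $\ov{\la}\,(\tau_2\circ\phi\circ\tau_1)=|\la|^2\phi$; hence $|\la|=1$.

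The last step is the rescaling. Replacing $\phi$ by $\mu\phi$ with $\mu\in\C^*$, the intertwining condition $(\mu\phi)\circ\tau_1=\tau_2\circ(\mu\phi)$ reduces, after rewriting $\tau_2\circ\phi=\la c\,\phi\circ\tau_1$ (obtained from the relation above together with $\tau_1^{-1}=c\,\tau_1$), to the scalar equation $\mu=\ov{\mu}\,\la c$, that is $\mu/\ov{\mu}=\la c$. Since $|\la c|=1$ and the map $\mu\mapsto\mu/\ov{\mu}$ surjects onto the unit circle, such a $\mu$ exists, and then $\mu\phi$ is the sought isomorphism of Real (resp.\ Quaternionic) vector bundles. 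I expect the only genuinely delicate points to be the verification that $|\la|=1$ and the careful bookkeeping of $c$ and of complex conjugation as scalars pass through the anti-linear factors; everything else is formal. A pleasant feature of this approach is that it is entirely uniform in $c$, so it disposes of the Real and the Quaternionic case simultaneously.

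For the consequence, I would simply combine this statement with Proposition \ref{self_conj_simple_bundle}. That proposition already provides surjective maps $\ModRds\lra\cB^{-1}(\{\R\})$ and $\ModHds\lra\cB^{-1}(\{\H\})$, each sending a geometrically stable Real (resp.\ Quaternionic) bundle $(\cE,\tau)$ to the isomorphism class of its underlying $\Si$-invariant stable holomorphic bundle $\cE$. Injectivity of these maps is precisely what the first part establishes: two geometrically stable Real (resp.\ Quaternionic) bundles whose underlying holomorphic bundles are isomorphic are already isomorphic as Real (resp.\ Quaternionic) bundles. Being surjective and injective, each map is a bijection, which is the asserted identification $\ModRds\simeq\cB^{-1}(\{\R\})$ and $\ModHds\simeq\cB^{-1}(\{\H\})$.
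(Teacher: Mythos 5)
Your argument is correct and is essentially the paper's own: the paper reduces to showing that two Real (resp.\ Quaternionic) structures on a stable holomorphic bundle are conjugate by an automorphism and then invokes Hilbert's Theorem 90 to conclude $H^1(\Si;\C^*)=1$, and your explicit computation that $|\la|=1$ together with the surjectivity of $\mu\mapsto\mu/\ov{\mu}$ onto the unit circle is exactly the cocycle and coboundary verification hidden in that cohomological statement (the paper itself remarks that this elementary version is the one in \cite[Proposition 2.8]{Sch_JSG}). Your deduction of the bijections from the surjections provided by Proposition \ref{self_conj_simple_bundle} likewise matches the paper.
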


\begin{proof}
It suffices to show that two Real (resp.\ Quaternionic) structures $\tau_1$ and $\tau_2$ on a stable holomorphic vector bundle $\cE$ are conjugate by an automorphism of $\cE$. An elementary proof is available in \cite[Proposition 2.8]{Sch_JSG} but it is interesting to see the group cohomology argument which is hidden there. To that end, let us replace $\tau_1$ and $\tau_2$ by isomorphisms $\phi_\si,\psi_\si:\si(\cE)\lra\cE$ defined as in \eqref{real_or_quat_structure_on_self_conjugate_bundles}. Then $a(\si):=\psi_\si\phi_\si^{-1}$ defines a $\C^*$-valued $1$-cocycle on $\Si$ (even when $\phi_\si$ and $\psi_\si$ both come from Quaternionic structures on $\cE$). Since $H^1(\Si;\C^*)=0$ by Hilbert's 90, we have that $\tau_1$ and $\tau_2$ are indeed conjugate by an automorphism of $\cE$.
\end{proof}

\subsection{Galois-invariant connections}\label{Galois_inv_conn}

We now present the gauge-theoretic construction the moduli spaces $\ModRdw$ and $\ModHd$, which lies at the heart of our proof of the Narasimhan-Seshadri correspondence for Real and Quaternionic vector bundles. The upshot of this construction is that it places Real and Quaternionic vector bundles on an equal footing: both are fixed points of an appropriate Real structure on a space of unitary connections. The general idea behind our approach is the Kempf-Ness theorem (\cite{KN}), or rather the use that Donaldson makes of that theorem in \cite{Don_NS}. In what follows, we fix a compatible Riemannian metric of volume $2\pi$ on the Riemann surface $M$ and we denote by $\vol_M$ the associated volume form. Following Atiyah and Bott in \cite{AB}, we denote by $E$ a Hermitian vector bundle of rank $r$ and degree $d$ on $M$, and $\cG_E$ the group of unitary automorphisms of $E$ (henceforth referred to as the unitary gauge group). Let $\cG_\C$ be the group of all complex linear automorphisms of $E$ (henceforth referred to as the complex gauge group; it can be seen as a complexification $\cG_E^{\,\C}$ of $\cG_E$). Let $\cA_E$ be the (affine) space of unitary connections on $E$. Then the unitary gauge action, defined for $u\in\cG_E$, $A\in\cA_E$ and $s\in\Omega^0(M;E)$ by 
$d_{u\cdot A} (s) = u\big(d_A(u^{-1}s)\big) = (d_A -(d_Au)u^{-1})\, s,$ extends to the complex gauge action defined for $g\in\cG_\C$ by $$d_{g\cdot A}(s) = g\big(\ov{\partial}_A(g^{-1}s)\big) + (g^*)^{-1} \big(\partial_A(g^*s)\big) = \big[ d_A - \big((\ov{\partial}_A g)g^{-1} - \big((\ov{\partial}_A g) g^{-1}\big)^*\big) \big]s,$$ where $g^*$ is the Hermitian adjoint of $g$ and $\ov{\partial}_A$ (resp.\ $\partial_A$) is the $(0,1)$ (resp.\ $(1,0)$) part of $d_A:\Omega^0(M;E)\lra \Omega^1(M;E)=\Omega^{1,0}(M;E) \oplus \Omega^{0,1}(M;E)$ (if $g^*=g^{-1}$, the actions above indeed coincide because $(\ov{\partial}_A g)^* = \partial_A (g^*)$), which puts us in a situation similar to that of the Kempf-Ness theorem, albeit in an infinite-dimensional context. This is interesting because, by the Newlander-Niren\-berg theorem, the $\cG_\C$-orbits in $\cA_E$ are precisely the isomorphism classes of holomorphic structures on $E$. Moreover, Atiyah and Bott have proved (\cite{AB}) that the curvature map $F:\cA_E \lra \Om^2\big(M;\fu(E)\big)\simeq\big(Lie(\cG_E)\big)^*$ (where $\fu(E)$ is the bundle of anti-Hermitian endomorphisms of $E$) is a momentum map for the unitary gauge action on $\cA_E$, and Donaldson has proved (\cite{Don_NS}) that poly-stable $\cG_\C$-orbits (i.e.\ $\cG_\C$-orbits of unitary connections defining a poly-stable holomorphic structure on $E$) are characterized by the fact that they contain a, necessarily unique, $\cG_E$-orbit of unitary connections having constant central curvature, i.e.\ satisfying the momentum map condition \begin{equation}\label{min_YM} F_A = \left(i  \frac{d}{r}\Id_E \right) \vol_M \in \Om^2\big(M;\fu(E)\big).\end{equation} Finally, Daskalopoulos and R{\aa}de have proved that  the closure of a semi\-stable $\cG_\C$-orbit (i.e.\ $\cG_\C$-orbits of unitary connections defining a semi-stable holomorphic structure on $E$) contains a unique poly-stable $\cG_\C$-orbit (\cite{Dask,Rade}). More precisely, if we denote by \begin{equation}\label{YM_def} \cL_{YM}: \begin{array}{ccc}\cA_E & \lra & \R \\ A & \lmt & \int_M \|F_A\|^2\end{array}\end{equation} the Yang-Mills functional of $E$ (the smooth map $\|\cdot\|^2:\Omega^2(M;\mathfrak{u}(E)) \lra \Omega^2(M;\R)$ being induced by the canonical scalar product $(P,Q)\lmt -\mathrm{tr}(PQ)$ on $\mathfrak{u}(r)$), then, given any $A\in\cA_E$, there exists a one-parameter family $(A_t)_{t\geq 0}$ of unitary connections satisfying \begin{equation}\label{YM_flow}\left\{\begin{array}{ccc} A_0 & = & A \\ \frac{d}{dt}A_t & = & -\mathrm{grad}_{A_t} \cL_{YM} \end{array}
\right.\end{equation} and such that $A_{\infty}:= \lim_{t\to +\infty} A_t$ exists and is a critical point of $\cL_{YM}$ (\cite{Dask,Rade}). The connection $A_{\infty}$ is called the limit point of $A$ under the Yang-Mills flow (i.e.\ the negative gradient flow of $\cL_{YM}$) and a critical point of $\cL_{YM}$ is called a Yang-Mills connection. Additionally, the limiting connection $A_{\infty}$ is an absolute minimum of $\cL_{YM}$ if and only if it defines a poly-stable holomorphic structure on $E$, if and only if the original connection $A$ defines a semi-stable holomorphic structure on $E$. And in this case, $\cG_\C\cdot A_{\infty}$ is the only poly-stable orbit contained in the closure $\ov{\cG_\C\cdot A}$ of $\cG_\C\cdot A$ in the semi-stable locus of $\cA_E$ (\cite{Dask,Rade}). In particular, the closed semi-stable $\cG_\C$-orbits are the poly-stable ones. Putting it all together, this shows that an analog, in the present context, of the Kempf-Ness theorem indeed holds: two semi-stable homolorphic structures on $E$ are $S$-equivalent if and only if the closures of the corresponding $\cG_\C$-orbits intersect in the semi-stable locus of $\cA_E$, and the space of closed semi-stable orbits is in bijection with unitary orbits of solutions to the momentum map equation \eqref{min_YM}. Denoting by $\cC_{ss}\subset\cA_E$ the set of unitary connections defining semi-stable holomorphic structures on $E$, we see that the above implies the existence of bijections (in fact, homeomorphisms)

\begin{equation}\label{GIT_pic_cx_case}
\cM^{ss}(E) \simeq \cC_{ss} \quot \cG_\C \simeq \fibre /\, \cG_E\ ,
\end{equation} 

\noindent between

\begin{itemize}
\item the set of $S$-equivalence classes of semi-stable holomorphic structures on $E$,
\item the space of closed semi-stable $\cG_\C$-orbits in $\cA_E$,
\item and the space of unitary gauge orbits of minimal Yang-Mills connections (solutions to Equation \eqref{min_YM}).
\end{itemize} 

\noindent Since any two Hermitian vector bundles of rank $r$ and degree $d$ over a curve are smoothly isometric, $\cM^{ss}(E)$ coincides with the moduli space $\ModCd$.

We now explain the invariant-theoretic picture analogous to \eqref{GIT_pic_cx_case} in the Real and Quaternionic cases (see \eqref{GIT_pic_Real_quat_case}), expanding the results of \cite{Sch_JSG} to include results on the closure of semi-stable orbits of Real and Quaternionic structures. As in \cite{AB}, the first step consists in fixing a Real (resp.\ Quaternionic) Hermitian vector bundle $(E,\tau)$ on $(M,\si)$. This means that the map $\tau:E\lra E$ is a fiberwise $\C$-antilinear isometry covering $\si:M\lra M$ and whose square is equal to $\Id_E$ (resp.\ $-\Id_E$). 

\begin{definition}\label{gauge-theoretic-moduli-spaces}
A holomorphic structure on a Real or Quaternionic Hermitian bundle $(E,\tau)$ is said to be $\tau$-compatible if it turns $\tau$ into an anti-holomorphic map. We denote by $\cM^{ss}(E,\tau)$ the set of $S$-equivalence classes of semi-stable $\tau$-compatible holomorphic structures on $(E,\tau)$.
\end{definition} 

Let us now fix a Real (resp.\ Quaternionic) Hermitian vector bundle $(E,\tau)$ on $(M,\Si)$ of rank $r$ and degree $d$, say, and study the set $\cM^{ss}(E,\tau)$ of $S$-equivalence classes (in the Real (resp.\ Quaternionic) sense) of semi-stable $\tau$-compatible holomorphic structures on $(E,\tau)$. The set of all holomorphic structures on $E$ is the space $\cA_E$ of unitary connections on that bundle. If $(u_{ij})_{(i,j)\in I \times I}$ is a unitary cocycle representing $E$, the cocycle $(\ov{u_{ij}\circ\si^{-1}})_{(\si(i),\si(j))}$ is also unitary, so the bundle $\si(E)$ is naturally Hermitian. Moreover, a unitary connection $A$ on $E$ induces a unitary connection $\si(A)$ on $\si(E)$: if $A$ is locally represented by the anti-Hermitian matrices $(a_i)_{i\in I}$, then $\si(A)$ is locally represented by the matrices $(\ov{a_{i}\circ\si^{-1}})_{\si(i)}$. A similar construction applies to the curvature $F_A$ of the connection $A$ and it is immediate that $F_{\si(A)}=\si(F_A)$. In particular, if $A$ is flat, then $\si(A)$ is also flat and, more generally, if $A$ is projectively flat then $\si(A)$ is also projectively flat, for $$\si(i\frac{d}{r}\vol_M) = \overline{(\si^{-1})^*\left(i\frac{d}{r}\vol_M\right)}=-i\frac{d}{r}((\si^{-1})^*\vol_M)=i\frac{d}{r}\vol_M$$ since $\si$ reverses orientation on $M$. Now, since $\tau^2=\pm\Id_E$, there is a $\C$-linear isomorphism $\phi_\si:\si(E) \lra E$ satisfying $\si(\phi_\si)=\pm\phi_\si^{-1}$ (namely $\phi_\si:=\tau\si_\cE^{-1}$, where $\si_\cE$ is defined as in \eqref{pullback_map}), so we can define the following map \begin{equation}\label{the_involution} \beta: \begin{array}{ccc} \cA_E & \lra & \cA_E \\ A & \lmt & (\phi_\si^{-1})^* \si(A) \end{array}\end{equation} where $(\phi_{\si}^{-1})^*\si(A)$ is the connection on $E$ defined, for all smooth sections $s\in\Omega^0(M;E)$, by $d_{(\phi_{\si}^{-1})^*\si(A)} (s) := \phi_\si\big(d_{\si(A)}(\phi_\si^{-1} s)\big)$.

\begin{proposition}[\cite{Sch_JSG}]\label{tau_inv_conn}
Let $(E,\tau)$ be a Real (resp.\  Quaternionic) Hermitian vector bundle. Then the map $\beta$ defined in \eqref{the_involution} is involutive and the holomorphic structure defined by a unitary connection $A\in\cA_E$ is $\tau$-compatible if and only if $\beta(A)=A.$ We call such a fixed point of $\beta$ a Galois-invariant connection.
\end{proposition}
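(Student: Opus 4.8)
The plan is to reduce both assertions to the single operator identity
\begin{equation*}
d_{\beta(A)} \;=\; \tau\circ d_A\circ\tau^{-1},
\end{equation*}
after which everything becomes formal. The one genuinely computational input is that the canonical anti-linear map $\si_\cE:E\lra\si(E)$ of \eqref{pullback_map} intertwines $A$ and $\si(A)$, namely $d_{\si(A)}\circ\si_\cE=\si_\cE\circ d_A$, once $\si_\cE$ is extended to $E$-valued forms in the natural way (conjugating fibre components and applying $\overline{(\si^{-1})^*}$ to form components). This is precisely what the local description of $\si(A)$ was tailored to produce: writing $A$ as anti-Hermitian matrices $(a_i)$ and a section as $(s_i)$, both sides are represented in the chart $\si(i)$ by $\overline{(\si^{-1})^*(ds_i+a_is_i)}$, so the identity is a routine local check. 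Equivalently, $d_{\si(A)}=\si_\cE\circ d_A\circ\si_\cE^{-1}$.

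First I would note that $\beta$ indeed maps $\cA_E$ into itself: the map $\phi_\si=\tau\si_\cE^{-1}$ is a $\C$-linear isometry of $\si(E)$ onto $E$ (a composite of the fibrewise anti-isometries $\tau$ and $\si_\cE^{-1}$), and conjugating the unitary connection $\si(A)$ by a $\C$-linear isometry again yields a unitary connection. Then, substituting the intertwining relation into the defining formula $d_{\beta(A)}=\phi_\si\circ d_{\si(A)}\circ\phi_\si^{-1}$ and using $\phi_\si\si_\cE=\tau$ gives
\begin{equation*}
d_{\beta(A)} \;=\; \phi_\si\circ\si_\cE\circ d_A\circ\si_\cE^{-1}\circ\phi_\si^{-1} \;=\; (\phi_\si\si_\cE)\circ d_A\circ(\phi_\si\si_\cE)^{-1} \;=\; \tau\circ d_A\circ\tau^{-1}.
\end{equation*}

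From here involutivity is immediate and uniform in the Real and Quaternionic cases: applying the identity twice, $d_{\beta^2(A)}=\tau\circ d_{\beta(A)}\circ\tau^{-1}=\tau^2\circ d_A\circ\tau^{-2}$, and since $\tau^2=c\,\Id_E$ with $c=\pm1$ one has $\tau^2\circ d_A\circ\tau^{-2}=c^2\,d_A=d_A$, so $\beta^2=\Id$, the sign disappearing because $c^2=1$. For the fixed-point statement, the identity shows that $\beta(A)=A$ is equivalent to $\tau\circ d_A=d_A\circ\tau$. Here I would observe that $\tau$ \emph{preserves} the type decomposition $\Omega^1=\Omega^{1,0}\oplus\Omega^{0,1}$: since $\si$ is anti-holomorphic the pullback $(\si^{-1})^*$ exchanges $(1,0)$ and $(0,1)$, while the fibrewise conjugation in $\tau$ exchanges them back. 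Hence $\tau\circ d_A=d_A\circ\tau$ splits into its two types, which are equivalent because $A$ is unitary and $\tau$ is a fibrewise anti-isometry, so the condition reduces to $\tau\circ\overline{\partial}_A=\overline{\partial}_A\circ\tau$. This last equation says exactly that $\tau$ carries local $\overline{\partial}_A$-holomorphic sections to $\overline{\partial}_A$-holomorphic sections, i.e.\ that $\tau$ is anti-holomorphic for the holomorphic structure defined by $A$, which by Definition \ref{gauge-theoretic-moduli-spaces} is the meaning of $\tau$-compatibility. (Equivalently, since $\si_\cE$ is always anti-holomorphic by \eqref{pullback_map} and $\tau=\phi_\si\si_\cE$, the map $\tau$ is anti-holomorphic iff $\phi_\si$ is holomorphic, i.e.\ iff $\overline{\partial}_{\beta(A)}=\overline{\partial}_A$, which for unitary connections is the same as $\beta(A)=A$.)

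The only non-formal step is the intertwining relation, and the sole subtlety there is the bookkeeping of the orientation-reversing pullback $(\si^{-1})^*$ together with complex conjugation when extending the anti-linear map $\si_\cE$ to $E$-valued forms. Once that is settled, the identity $d_{\beta(A)}=\tau\circ d_A\circ\tau^{-1}$ does all the remaining work, and in particular the Real and Quaternionic cases require no separate treatment.
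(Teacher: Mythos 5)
Your proof is correct and follows essentially the same route as the paper: the identity $d_{\beta(A)}=\tau\circ d_A\circ\tau^{-1}$ is exactly what the paper means when it says that $\beta(A)=A$ iff $d_A$ commutes with the structures induced by $\si$ and $\tau$ on $\Omega^0(M;E)$ and $\Omega^1(M;E)$, and your involutivity argument ($\tau^2=c\,\Id_E$ with $c^2=1$) is the same computation the paper performs at the level of pullbacks via $\si(\phi_\si)=\pm\phi_\si^{-1}$. Your discussion of the type decomposition and the reduction to $\tau\circ\ov{\partial}_A=\ov{\partial}_A\circ\tau$ is in fact slightly more explicit than the paper, which defers that step to \cite{Sch_JSG}.
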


\begin{proof}
Recall that $\si(\phi_\si)=\pm\phi_\si^{-1}$, depending on whether $\tau$ is Real or Quaternionic. So $\beta^2(A) = (\phi_\si^{-1})^* \si((\phi_\si^{-1})^*\si(A)) = (\si(\phi_\si)^{-1}\phi_\si^{-1})^* \si^2(A) = (\pm\Id_E)^* A = A,$ since $\pm\Id_E$ lies in the center of $\cG_E$ therefore acts trivially on $A$. Moreover, $\beta(A)=A$ if and only if the corresponding covariant derivative $d_A: \Omega^0(M;E) \lra \Omega^1(M;E)$ commutes to the Real (resp.\ Quaternionic) structures of $\Omega^0(M;E)$ and $\Omega^1(M;E)$ induced by $\si$ and $\tau$. This guarantees that $\ker \ov{\partial}_A$ inherits a Real (resp.\ Quaternionic) structure, which in turn endows $E$ with a $\tau$-compatible holomorphic structure (\cite{Sch_JSG}).
\end{proof}

\noindent Note that $\beta$ is involutive even if $\tau$ is Quaternionic. Moreover, the involution $\beta$ is compatible with the Hamiltonian structure of $\cA_E$. Indeed, first note that both the unitary and complex gauge groups of $E$ inherit an involution, also denoted by $\beta$, $$\beta: \begin{array}{ccc} \cG_\C & \lra & \cG_\C \\ g & \lmt & (\phi_\si^{-1})^* \si(g) := \phi_\si \si(g) \phi_\si^{-1} = \tau g\tau^{-1} \end{array}$$ (this indeed preserves the sub-group $\cG_E \subset \cG_\C$ because $\phi_\si$ is an isometry) and that the fixed-point set of $\beta$ consists of gauge transformations of $E$ which commute to $\tau$. Note that $\beta$ induces an involution $R\lmt (\phi_\si^{-1})^* \si(R)$ on $\Omega^2(M;\mathfrak{u}(E)) \simeq (Lie\, \cG_E)^*$. Then, the following result holds.

\begin{theorem}[\cite{Sch_JSG}]\label{Hamil_picture}
The involution $\beta$ defined in \eqref{the_involution} is anti-symplectic with respect to the Atiyah-Bott symplectic form $\omega_A(a,b) = \int_M -\mathrm{tr}(a\wedge b)$ on $\cA_E$. It is an isometry of the K\"ahler metric of $\cA_E$ and it is also compatible with the action of $\cG_\C$ and the momentum map of the induced $\cG_E$-action, in the following sense:
\begin{enumerate}
\item for all $g\in\cG_C$ and all $A\in\cA_E$, $\beta(g\cdot A) = \beta(g)\cdot \beta(A)$,
\item for all $A$ in $\cA_E$, $F_{\beta(A)} = \beta(F_A)$.
\end{enumerate}
\end{theorem}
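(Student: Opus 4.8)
The plan is to linearize $\beta$ and reduce each of the four assertions to a pointwise statement on $M$ followed by an integration, the only genuinely global inputs being that $\si$ reverses orientation and that $\mathrm{tr}(PQ)$ is real for anti-Hermitian $P,Q$. Since $\beta$ is an affine involution of the affine space $\cA_E$, its derivative at any connection acts on the model vector space $\Omega^1(M;\fu(E))$ by the same formula, $d\beta(a) = (\phi_\si^{-1})^*\os{a} = \phi_\si\,\os{a}\,\phi_\si^{-1}$, where $\os{a}$ is the $\fu(\si(E))$-valued form obtained from $a$ by conjugating matrix entries and pulling back by $\si^{-1}$, exactly as for connections in \eqref{the_involution}. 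I will use throughout that conjugation by the fixed bundle isomorphism $\phi_\si$ is invisible to the trace, so that $\mathrm{tr}\big(d\beta(a)\wedge d\beta(b)\big) = \mathrm{tr}(\os{a}\wedge\os{b})$, and likewise with a Hodge star inserted.

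For the anti-symplectic property I would compute $\omega(d\beta a, d\beta b) = \int_M -\mathrm{tr}(\os a\wedge\os b)$. Two observations finish this. First, $-\mathrm{tr}(a\wedge b)$ is a \emph{real} $2$-form, because $\mathrm{tr}(PQ)$ is real on anti-Hermitian matrices; hence the entrywise conjugation built into $\os{(\cdot)}$ leaves it unchanged, and after pulling back one is left with $(\si^{-1})^*\big(-\mathrm{tr}(a\wedge b)\big)$. Second, $\si$ is anti-holomorphic, hence orientation-reversing, so $\int_M(\si^{-1})^*\eta = -\int_M\eta$ for every $2$-form $\eta$. Combining the two gives $\omega(d\beta a, d\beta b) = -\omega(a,b)$. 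For the $L^2$-metric $g(a,b)=\int_M -\mathrm{tr}(a\wedge \ast b)$ induced by the Hodge star $\ast$, the same reality argument neutralizes the conjugation, but now there are \emph{two} sign changes: the orientation reversal of the integral, and the identity $(\si^{-1})^*\ast = -\ast(\si^{-1})^*$ on $1$-forms, valid because the Hodge star on $1$-forms of a surface is conformally invariant while $\si$ is an orientation-reversing conformal map. The two signs cancel, yielding $g(d\beta a, d\beta b) = g(a,b)$, so $\beta$ is an isometry. As a consistency check, these two facts say that $\beta$ is simultaneously anti-holomorphic and isometric, from which the anti-symplectic relation also follows via the K\"ahler identity $\omega(\cdot,\cdot)=g(J\cdot,\cdot)$.

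Assertions (1) and (2) are pure naturality, and I would dispatch them by decomposing $\beta$ into its two building blocks. The operation $A\mapsto\si(A)$, $g\mapsto\si(g)$ intertwines the $\cG_\C$-action on $\cA_E$ with the corresponding action on $\cA_{\si(E)}$, because the gauge-action formula is a universal expression in $\ov\partial_A$, $\partial_A$ and the Hermitian adjoint, all natural under conjugate pullback; this is the same naturality already used in the text to record $F_{\si(A)}=\si(F_A)$. The second block $(\phi_\si^{-1})^*$ is conjugation by a fixed $\C$-linear bundle isomorphism, under which the gauge action is manifestly equivariant and curvature transforms by conjugation. Composing the two blocks gives $\beta(g\cdot A)=\beta(g)\cdot\beta(A)$ and $F_{\beta(A)}=(\phi_\si^{-1})^*\si(F_A)=\beta(F_A)$ at once.

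The one place that genuinely requires care, and the step I expect to be the main obstacle, is the sign bookkeeping in the second paragraph: one must confirm that the entrywise conjugation contributes no sign (being absorbed by the reality of the trace), that the orientation reversal contributes exactly one sign to both $\omega$ and $g$, and that the Hodge star contributes a further sign to $g$ but not to $\omega$. I would pin these down with a local computation in an isothermal coordinate adapted to $\si$ (for instance modeling $\si$ as $z\mapsto\ov z$ near $M^\si$, and as the relevant orientation-reversing conformal involution elsewhere) together with a unitary frame, making the orientation reversal and the absence of a conformal factor on middle-degree forms explicit. None of this is deep, but it is precisely where a stray sign would collapse either the anti-symplectic or the isometry claim.
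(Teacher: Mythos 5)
Your argument is correct; note that the paper itself gives no proof of this statement, deferring entirely to the citation \cite{Sch_JSG}, so there is no in-text proof to diverge from. The ingredients you use --- linearizing $\beta$ to $a\mapsto\phi_\si\,\os{a}\,\phi_\si^{-1}$, the reality of $\mathrm{tr}(PQ)$ on anti-Hermitian matrices absorbing the conjugation, the single orientation-reversal sign for $\omega$ versus the cancelling pair of signs (orientation reversal together with $(\si^{-1})^*\ast=-\ast(\si^{-1})^*$ on $1$-forms) for the $L^2$-metric, and naturality for (1) and (2) --- are exactly the expected ones and the sign bookkeeping checks out. The only point worth making explicit in part (1) is that conjugation by $\phi_\si$ is equivariant for the \emph{complex} gauge action, whose defining formula involves the Hermitian adjoint, precisely because $\phi_\si=\tau\si_{\cE}^{-1}$ is unitary (as the paper notes when checking that $\beta$ preserves $\cG_E\subset\cG_\C$).
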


This implies that $\beta$ induces an anti-holomorphic and anti-symplectic involution $\hat{\beta}$ of the (smooth part of the) quotient $$\fibre /\, \cG_E \simeq \cM^{ss}(E)\simeq \ModCd\, .$$ The involution $\hat{\beta}$ is independent of $\tau$ (in particular, it does not depend on whether $\tau$ is Real or Quaternionic) and it coincides with the canonical Real structure $\cE\lmt \si(\cE)$ of $\ModCd$. In what follows, we will denote by $\cG_E^{\,\tau}$ (resp.\ $\cG_\C^\tau$) the group of fixed points of $\beta$ in $\cG_E$ (resp.\ $\cG_\C$) and call it the $\tau$-unitary (resp.\ $\tau$-complex) gauge group. Let us write $\cA_E^{\, \tau}$ for the set of $\tau$-compatible holomorphic structures on $(E,\tau)$. By Proposition \ref{tau_inv_conn}, $\cA_E^{\, \tau}=\mathrm{Fix}(\beta)$, the set of Galois-invariant unitary connections on $(E,\tau)$ and it is a closed affine subspace of $\cA_E$. Property (1) of Theorem \ref{Hamil_picture} shows that the group $\cG_\C^\tau$ and its sub-group $\cG_E^{\,\tau}$ act on $\cA_E^{\, \tau}$. Moreover, as seen in \cite{Sch_JSG}, $\cG_\C^\tau$-orbits in $\cA_E^{\, \tau}$ are precisely isomorphism classes of Real (resp.\ Quaternionic) vector bundles that are smoothly isomorphic to $(E,\tau)$. We then have the following result, which is an analog of the theorems of Donaldson and Daskalopoulos-R{\aa}de and which will be proved in Section \ref{section_YM_flow}.

\begin{theorem}\label{main_step_towards_NS}
Let $(E,\tau)$ be a Real (resp.\ Quaternionic) Hermitian vector bundle on a compact connected Real Riemann surface $(M,\Si)$ of genus $g\geq 2$. Let $\cA_E^{\, \tau}$ be the space of Galois-invariant unitary connections on $(E,\tau)$ and let $A\in\cA_E^{\, \tau}$ define a semi-stable $\tau$-compatible holomorphic structure on $(E,\tau)$. Denote by $A_{\infty}$ the limit point of $A$ under the Yang-Mills flow \eqref{YM_flow}. Then:
\begin{enumerate}
\item $A_{\infty}\in\cA_E^{\, \tau}$ and it defines a poly-stable $\tau$-compatible holomorphic structure on $(E,\tau)$,
\item $\cG_\C^\tau\cdot A_{\infty}$ is the unique poly-stable $\cG_\C^\tau$-orbit contained in the closure $\ov{\cG_\C^\tau\cdot A}$ of $\cG_\C^\tau\cdot A$ in the semi-stable locus of $\cA_E^{\,\tau}$. In particular, the closed semi-stable $\cG_\C^\tau$-orbits in $\cA_E^{\, \tau}$ are the poly-stable ones,
\item the space of closed semi-stable $\cG_\C^\tau$-orbits in $\cA_E^{\, \tau}$ is in bijection with the Lagrangian quotient $$\left( \fibre \cap \cA_E^{\, \tau}\right) \big/\, \cG_E^{\,\tau}$$ i.e.\ with the space of $\cG_E^{\,\tau}$-orbits of Galois-invariant minimal Yang-Mills connections on $E$.
\end{enumerate}
\end{theorem}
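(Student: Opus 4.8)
The plan is to deduce Theorem \ref{main_step_towards_NS} from the corresponding complex results of Donaldson and Daskalopoulos--R{\aa}de by exploiting the $\beta$-equivariance established in Theorem \ref{Hamil_picture}. The central observation is that the Yang-Mills functional $\cL_{YM}$ is $\beta$-invariant: since $\beta$ is an isometry of the K\"ahler metric of $\cA_E$ and satisfies $F_{\beta(A)}=\beta(F_A)$ with $\beta$ preserving the norm on $\Omega^2(M;\fu(E))$, we have $\cL_{YM}(\beta(A))=\cL_{YM}(A)$ for all $A$. Consequently $\beta$ commutes with the gradient flow \eqref{YM_flow}, so it maps Yang-Mills flow lines to Yang-Mills flow lines. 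The first step, then, is to note that if $A\in\cA_E^{\,\tau}=\Fix(\beta)$, then the entire trajectory $(A_t)_{t\geq 0}$ lies in $\Fix(\beta)$ by uniqueness of solutions to \eqref{YM_flow}; since $\cA_E^{\,\tau}$ is a closed subspace of $\cA_E$, the limit point $A_\infty=\lim_{t\to+\infty}A_t$ also lies in $\cA_E^{\,\tau}$. This gives the first assertion of (1), while the fact that $A_\infty$ defines a poly-stable structure follows from the complex theory, because $A$ defines a semi-stable structure and the Daskalopoulos--R{\aa}de theorem identifies limit points of semi-stable connections as poly-stable ones; poly-stability together with $\tau$-compatibility is exactly poly-stability as a Real (resp.\ Quaternionic) bundle by Proposition \ref{ps_and_tau_comp}.

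For part (2), I would proceed by descent from the complex picture. By the complex theory, $\cG_\C\cdot A_\infty$ is the unique poly-stable $\cG_\C$-orbit in the closure $\ov{\cG_\C\cdot A}$ within the semi-stable locus. The key is to intersect everything with $\cA_E^{\,\tau}$. First I would show that any poly-stable $\cG_\C^\tau$-orbit in $\ov{\cG_\C^\tau\cdot A}\cap\cA_E^{\,\tau}$ must lie inside $\ov{\cG_\C\cdot A}$ and hence coincide, as a complex orbit, with $\cG_\C\cdot A_\infty$; this reduces the uniqueness statement to showing that $\cG_\C\cdot A_\infty\cap\cA_E^{\,\tau}$ is a single $\cG_\C^\tau$-orbit. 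This last point is where the group-cohomological argument enters: if $A_\infty$ and $g\cdot A_\infty$ (for some $g\in\cG_\C$) both lie in $\cA_E^{\,\tau}$, then applying $\beta$ and using $\beta(g\cdot A)=\beta(g)\cdot\beta(A)$ from Theorem \ref{Hamil_picture}(1) shows that $\beta(g)^{-1}g$ stabilizes $A_\infty$; since $A_\infty$ is poly-stable its stabilizer in $\cG_\C$ is reductive and one can run a Hilbert 90--type argument, exactly as in the proofs of Propositions \ref{self_conj_simple_bundle} and \ref{moduli_of_geom_stable_Real_and_quat}, to write $g$ as a product of an element of $\cG_\C^\tau$ and an element of the stabilizer, establishing that $g\cdot A_\infty$ is in the same $\cG_\C^\tau$-orbit. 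Part (3) then follows by combining (1), (2) and the $\beta$-equivariance of the momentum map $F$: the solutions of \eqref{min_YM} lying in $\cA_E^{\,\tau}$ are precisely $\fibre\cap\cA_E^{\,\tau}$, and Donaldson's uniqueness of the $\cG_E$-orbit of a minimal connection inside each poly-stable complex orbit descends to uniqueness of the $\cG_E^{\,\tau}$-orbit inside each poly-stable $\cG_\C^\tau$-orbit by the same conjugacy argument applied to the compact group $\cG_E$.

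The main obstacle I anticipate is part (2), specifically showing that the intersection of a single poly-stable complex orbit with the fixed-point set $\cA_E^{\,\tau}$ is a single $\tau$-complex orbit rather than a disjoint union of several. This is the infinite-dimensional gauge-theoretic incarnation of the cohomological splitting phenomenon already visible in Proposition \ref{self_conj_simple_bundle}, and it requires care because the stabilizer of a poly-stable connection can be a nontrivial reductive group (a product of general linear factors indexed by the isotypic components), so the relevant cohomology set is $H^1(\Si;\mathrm{Stab}_{\cG_\C}(A_\infty))$. The crux is to verify that this cohomology vanishes, or rather that every $\beta$-invariant stabilizer-valued cocycle is a coboundary; for a product of $\GL$-factors this again reduces to Hilbert's Theorem 90 for $\R$, but one must track carefully how $\beta$ permutes the isotypic factors, distinguishing the factors fixed by $\beta$ (which contribute $H^1(\Si;\GL_n(\C))=0$) from those swapped in pairs (which contribute trivially by Shapiro's lemma). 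Establishing this control over the stabilizer and its $\Si$-action is the technical heart of the argument, whereas parts (1) and the reduction in (3) are comparatively formal once $\beta$-equivariance of the flow and of the momentum map is in hand.
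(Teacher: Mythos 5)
Your treatment of Part (1) is correct and coincides with the paper's Lemma \ref{inv_under_flow}: the $\beta$-invariance of $\cL_{YM}$ together with the fact that $\beta$ is an isometry makes the negative gradient flow $\beta$-equivariant, so the closed set $\cA_E^{\,\tau}=\Fix(\beta)$ traps both the trajectory and its limit, and poly-stability of $A_\infty$ then comes from the complex theory plus Propositions \ref{tau_inv_conn} and \ref{ps_and_tau_comp}. The genuine gap is in Part (2), where you address only the uniqueness half. Nowhere do you establish the existence half, namely that $\cG_\C^\tau\cdot A_{\infty}$ is actually \emph{contained} in $\ov{\cG_\C^\tau\cdot A}$. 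What Part (1) and Daskalopoulos--R{\aa}de give you is $A_\infty\in\ov{\cG_\C\cdot A}\cap\cA_E^{\,\tau}$, and a priori the trajectory $(A_t)$ only lies in $\cG_\C\cdot A\cap\cA_E^{\,\tau}$, which could be strictly larger than $\cG_\C^\tau\cdot A$. Your Hilbert-90 analysis is carried out only at the poly-stable limit, where the stabilizer is reductive; along the flow the connections $A_t$ are merely semi-stable, their stabilizers are not reductive, and you give no argument there, so the existence statement does not follow from anything you wrote. The paper closes exactly this gap with Lemma \ref{moves_inside_orbit}: the Yang--Mills gradient at a Galois-invariant $A$ lies in $T_A(\cA_E^{\,\tau})\cap T_A(\cG_\C\cdot A)$, and this intersection equals $T_A(\cG_\C^\tau\cdot A)$ by an infinitesimal averaging argument (if $X\in Lie\,\cG_\C$ induces a $T_A\beta$-fixed tangent vector, then $\tau(X)-X$ is in the infinitesimal stabilizer of $A$ and $\frac{X+\tau(X)}{2}\in Lie\,\cG_\C^{\tau}$ induces the same tangent vector). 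Hence the whole flow line stays in $\cG_\C^\tau\cdot A$ and $A_\infty\in\ov{\cG_\C^\tau\cdot A}$. You need this lemma, or an equivalent, for Part (2) to hold.

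For the uniqueness half your route is genuinely different from the paper's and is workable, but heavier than what is needed. The paper never computes $H^1(\Si;\mathrm{Stab}_{\cG_\C}(A_\infty))$: given a second poly-stable $\cG_\C^\tau$-orbit in the closure, Daskalopoulos--R{\aa}de identifies the two complex orbits, Donaldson puts the two minimal Yang--Mills connections in the same $\cG_E$-orbit, and the statement that two Galois-invariant minimal Yang--Mills connections in one $\cG_E$-orbit lie in one $\cG_E^{\,\tau}$-orbit is quoted from \cite{Sch_JSG}; this simultaneously yields Part (3). Your global cocycle argument on the reductive stabilizer (Hilbert 90 on the $\beta$-fixed $\GL$-factors, Shapiro's lemma on the swapped pairs, and $H^1(\Gal(\C/\R);\GL_1(\H))=\{1\}$ for the quaternionically twisted factors) is a correct strategy for the ``single real orbit inside a complex orbit'' claim and would give an alternative proof of uniqueness, but it is precisely the part you flag as delicate, and even completed it does not substitute for the missing existence step above.
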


\noindent In other words, two semi-stable $\tau$-compatible holomorphic structures on $(E,\tau)$ are $S$-equivalent in the Real (resp.\ Quaternionic) sense if and only if the closures (in the semi-stable locus of $\cA_E^{\, \tau}$) of the corresponding $\cG_\C^\tau$-orbits intersect. Since $\cC_{ss}$ is a $\beta$-invariant and $\cG_{\C}$-invariant subset of $\cA_E$, we have in particular that $\cG_{\C}^{\tau}$ acts on $\cC_{ss}^{\,\tau}$ and, analogously to \eqref{GIT_pic_cx_case}, there are homeomorphisms 

\begin{equation}\label{GIT_pic_Real_quat_case}
\cM^{ss}(E,\tau) \simeq \cC_{ss}^{\, \tau} \quot \cG_\C^\tau \simeq \left( \fibre \cap \cA_E^{\, \tau}\right) /\, \cG_E^{\,\tau}\ ,
\end{equation} 
 
\noindent between 

\begin{itemize}
\item the set of $S$-equivalence classes of semi-stable $\tau$-compatible holomorphic structures on $(E,\tau)$,
\item the space of closed semi-stable $\cG_\C^\tau$-orbits in $\cA_E^{\, \tau}$,
\item and the space of $\tau$-unitary gauge orbits of Galois-invariant minimal Yang-Mills connections (invariant solutions to Eq. \eqref{min_YM}).
\end{itemize} 

\noindent The proof of Theorem \ref{main_step_towards_NS} will be given in Section \ref{section_YM_flow}. For now, we use it to give $\cM^{ss}(E,\tau)$ a topology.

\begin{definition}\label{define_a_topology}
The set $\fibre\cap \cA_E^{\,\tau}$ inherits the subspace topology from the space $\cA_E$ of unitary connections on $E$, as defined in \cite{AB}. The groups $\cG_E^{\,\tau}\subset\cG_\C^{\tau}$ inherit the sub-space topology from the groups $\cG_E\subset\cG_\C$. The set $\cM^{ss}(E,\tau)\simeq(\fibre\cap \cA_E^{\,\tau})/\cG_E^{\,\tau}$ is endowed with the quotient topology.
\end{definition}

\noindent We recall from \cite{AB,Don_NS} that the space $\cA_E$ is the space of unitary connections on $E$ of Sobolev class $L^2_1$. This Sobolev norm turns $\cA_E$ into a Banach affine space. And $\cG_E$ is the set of unitary gauge transformations of Sobolev class $L^2_2$ and is a Banach Lie group. This is compatible with the study of holomorphic structures on $E$ because gauge transformations of Sobolev class $L^2_2$ preserve the topology of the bundle and because any complex gauge orbit of unitary connections of Sobolev class $L^2_1$ contains a smooth unitary connection (\cite[Lemma 14.8]{AB}). Moreover, if two smooth unitary connections are related by a gauge transformation then the latter is necessarily smooth (\cite[Lemma 14.9]{AB}). Arguably, the definition of the topology of $\cM^{ss}(E,\tau)$ given above can seem \textit{ad hoc} for our purposes of establishing a homeomorphism between gauge equivalence classes of Galois-invariant connections and conjugacy classes of Galois-equivariant unitary representations but it is in fact natural. Indeed, it was shown in \cite{Sch_JSG} that the  Lagrangian quotients $(\fibre\cap\cA_E^{\,\tau})/\cG_E^{\,\tau}$ embed continuously into the set of Real points of $(\ModCd,\Si)$, where $r=\rk(E)$ and $d=\deg(E)$, so the topology introduced in Definition \ref{define_a_topology} makes $\cM^{ss}(E,\tau)$ a topological subspace of $\ModCd^\Si\subset \ModCd$ when $\ModCd$ is equipped with its Hausdorff topology of complex variety (in which it is homeomorphic to the symplectic quotient $\fibre/\,\cG_E$, by the Narasimhan-Seshadri theorem). We note that, when $r\wedge d=1$, $\cM^{ss}(E,\tau)$ is actually a connected component of $\ModCd^\Si$ in said topology (\cite{Sch_JSG}).

\subsection{Properties of the Yang-Mills flow on Galois-invariant connections}\label{section_YM_flow}

The first step to prove Theorem \ref{main_step_towards_NS} is to show that the space $\cA_E^{\,\tau}$ of Galois-invariant unitary connections on $(E,\tau)$ is invariant under the Yang-Mills flow. Recall that in \eqref{YM_def} we denoted by $\cL_{YM}$ the Yang-Mills functional and that we have defined in \eqref{the_involution} an involutive isometry $\beta: \cA_E \lra \cA_E$ such that, by Theorem \ref{Hamil_picture}, one has, for all $A\in \cA_E$, $F_{\beta(A)} = \beta(F_A)$.

\begin{lemma}\label{inv_under_flow}
The map $\beta$ satisfies $\cL_{YM}\circ\beta=\cL_{YM}$ and, for all $A\in\cA_E$, $$\mathrm{grad}_{\beta(A)}\cL_{YM} = T_A\beta\cdot \mathrm{grad}_A\cL_{YM}.$$ In particular, if $A\in\cA_E^{\,\tau}=\mathrm{Fix}(\beta)$, then $\mathrm{grad}_A\cL_{YM} \in \mathrm{Fix}(T_A\beta) = T_A(\cA_E^{\,\tau})$.
\end{lemma}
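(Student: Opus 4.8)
The plan is to deduce the lemma from the two structural facts recorded in Theorem~\ref{Hamil_picture}: that $\beta$ is an isometry of the (weak) K\"ahler metric of $\cA_E$, and that $F_{\beta(A)}=\beta(F_A)$, where $\beta$ also denotes the induced involution $R\lmt(\phi_\si^{-1})^*\si(R)$ on $\Om^2(M;\fu(E))$. The statement splits into three pieces, the invariance $\cL_{YM}\circ\beta=\cL_{YM}$, the equivariance of the gradient, and the fixed-point consequence, and the last two are purely formal once the first is established.

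First I would prove $\cL_{YM}\circ\beta=\cL_{YM}$. Since $F_{\beta(A)}=\beta(F_A)$, it suffices to check that the involution $R\lmt(\phi_\si^{-1})^*\si(R)$ on $\Om^2(M;\fu(E))$ preserves the functional $R\lmt\int_M\|R\|^2$. Writing $R=\widehat{R}\,\vol_M$ with $\widehat R\in\Om^0(M;\fu(E))$, orientation reversal of $\si$ gives $(\si^{-1})^*\vol_M=-\vol_M$ (exactly as used in the computation of $\si(i\frac dr\vol_M)$ in Section~\ref{Galois_inv_conn}), so $\si(R)=-\overline{(\si^{-1})^*\widehat R}\,\vol_M$; the sign squares away in $\|\cdot\|^2=-\mathrm{tr}((\,\cdot\,)^2)\,\vol_M$, and since both complex conjugation and conjugation by the isometry $\phi_\si$ preserve $-\mathrm{tr}(X^2)$ for anti-Hermitian $X$, one gets $\|\beta(R)\|^2=\big((-\mathrm{tr}\,\widehat R^{\,2})\circ\si^{-1}\big)\vol_M$ pointwise. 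Integrating and using that the ($\si$-invariant) metric makes $\si$ preserve the volume density yields $\int_M\|\beta(R)\|^2=\int_M\|R\|^2$, hence the invariance. Alternatively one simply invokes the isometry statement of Theorem~\ref{Hamil_picture} for $\fu(E)$-valued $2$-forms.

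The gradient equivariance is then the standard principle that the gradient of an invariant function under an isometry is itself equivariant. For arbitrary $a\in T_A\cA_E$ one has $\langle\mathrm{grad}_{\beta(A)}\cL_{YM},\,T_A\beta\cdot a\rangle_{\beta(A)}=d(\cL_{YM})_{\beta(A)}(T_A\beta\cdot a)=d(\cL_{YM}\circ\beta)_A(a)=d(\cL_{YM})_A(a)=\langle\mathrm{grad}_A\cL_{YM},a\rangle_A$, while the isometry property of $\beta$ gives $\langle T_A\beta\cdot\mathrm{grad}_A\cL_{YM},\,T_A\beta\cdot a\rangle_{\beta(A)}=\langle\mathrm{grad}_A\cL_{YM},a\rangle_A$; since $a$ is arbitrary and $T_A\beta$ is invertible, non-degeneracy forces $\mathrm{grad}_{\beta(A)}\cL_{YM}=T_A\beta\cdot\mathrm{grad}_A\cL_{YM}$. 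Specializing to $A\in\cA_E^{\,\tau}=\Fix(\beta)$ gives $\beta(A)=A$, hence $\mathrm{grad}_A\cL_{YM}=T_A\beta\cdot\mathrm{grad}_A\cL_{YM}$, i.e.\ $\mathrm{grad}_A\cL_{YM}\in\Fix(T_A\beta)$; and since $\cA_E^{\,\tau}$ is the closed affine subspace cut out as the fixed set of the affine involution $\beta$, its tangent space at every point is precisely the $(+1)$-eigenspace $\Fix(T_A\beta)$, which is the asserted identification.

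I expect no serious obstacle here, as the heavy lifting is done by Theorem~\ref{Hamil_picture}. The only genuine care-points are the orientation bookkeeping of the first step, where the sign from $(\si^{-1})^*\vol_M=-\vol_M$ must be tracked against the anti-linearity of $\si$, and the fact that everything takes place in the infinite-dimensional setting of the weak $L^2$ metric, where \emph{gradient} and \emph{flow} are understood in the sense of Daskalopoulos and R\aa de; the isometry-invariance argument for the gradient is formal and transfers verbatim to that setting. This lemma is exactly what guarantees that $\cA_E^{\,\tau}$ is invariant under the Yang-Mills flow, which is the first ingredient in the proof of Theorem~\ref{main_step_towards_NS}.
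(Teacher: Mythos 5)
Your proposal is correct and follows essentially the same route as the paper: invariance $\cL_{YM}\circ\beta=\cL_{YM}$ deduced from $F_{\beta(A)}=\beta(F_A)$ and the norm-preservation of the induced involution on $\Om^2(M;\fu(E))$, followed by the standard isometry argument identifying $\mathrm{grad}_{\beta(A)}\cL_{YM}$ with $T_A\beta\cdot\mathrm{grad}_A\cL_{YM}$ and the fixed-point specialization. The only difference is that you spell out the orientation and sign bookkeeping behind $\|\beta(F_A)\|^2=\|F_A\|^2$, which the paper simply asserts as a consequence of Theorem \ref{Hamil_picture}.
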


\begin{proof}
One has $\|F_{\beta(A)}\|^2=\|\beta(F_A)\|^2 = \|F_A\|^2$, so $\cL_{YM}\circ\beta=\cL_{YM}$. In particular, for all $A\in\cA_E$, $T_{\beta(A)}\cL_{YM}\circ T_A\beta = T_A\cL_{YM}$. So, by definition of the gradient, one has, for all $v\in T_A(\cA_E)$,
$(\mathrm{grad}_{\beta(A)} \cL_{YM}\,|\, v) 
= T_{\beta(A)}\cL_{YM}\cdot v 
= T_A(\cL_{YM}\circ\beta)\circ (T_A\beta)^{-1} \cdot v 
= (\mathrm{grad}_A\cL_{YM}\, |\, (T_A\beta)^{-1}\cdot v).$
Since $\beta$ is an isometry of $\cA_E$, this shows that $\mathrm{grad}_{\beta(A)}\cL_{YM}=T_A\beta\cdot\mathrm{grad}_A\cL_{YM}$ and the rest follows. 
\end{proof}

\noindent We can now prove Part (1) of Theorem \ref{main_step_towards_NS}.

\begin{proof}[Proof of Part \emph{(1)} of Theorem \ref{main_step_towards_NS}]
As a consequence of Lemma \ref{inv_under_flow}, $\cA_E^{\,\tau}$ is invariant under the Yang-Mills flow \eqref{YM_flow}. Since $\cA_E^{\,\tau}=\Fix(\beta)$ is closed in $\cA_E$, the limiting connection $A_{\infty}$ of a Galois-invariant connection $A$ is also Galois-invariant. Moreover, if $A$ is semi-stable, then $A_{\infty}$ is of constant central curvature by the Daskalopoulos-R{\aa}de theorem. Since a connection which is both of constant central curvature and Galois-invariant defines a holomorphic structure on $(E,\tau)$ which is both poly-stable (by Donaldson's theorem) and $\tau$-compatible (by Proposition \ref{tau_inv_conn}), Part (1) of Theorem \ref{main_step_towards_NS} is proved (note that, by Proposition \ref{ps_and_tau_comp}, being both poly-stable and $\tau$-compatible is indeed the same as being a poly-stable $\tau$-compatible holomorphic structure).
\end{proof}

In order to prove Parts (2) and (3) of Theorem \ref{main_step_towards_NS}, we show that the Yang-Mills flow moves a Galois-invariant connection inside its $\cG_{\C}^{\tau}$-orbit, just as it moves a general unitary connection inside its $\cG_\C$-orbit (\cite{AB}), and with that we will be able to conclude the proof of Theorem \ref{main_step_towards_NS}.

\begin{lemma}\label{moves_inside_orbit}
Let $A\in\cA_E^{\,\tau}=\Fix(\beta)$ and let $(A_t)_{t\geq 0}$ be the one-parameter family of unitary connections obtained by flowing $A$ along Yang-Mills gradient lines as in Equation \eqref{YM_flow}. Then, for all $t\geq 0$, $A_t\in\cG_{\C}^{\tau}\cdot A$.
\end{lemma}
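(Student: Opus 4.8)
The plan is to lift the Yang--Mills flow to a path in the complex gauge group $\cG_\C$ and then to show that this path can be chosen inside the $\tau$-complex gauge group $\cG_\C^\tau=\Fix(\beta)$. First I would recall the mechanism, due to Atiyah--Bott (\cite{AB}) and Donaldson (\cite{Don_NS}), that already proves the analogous statement in the complex case: the downward gradient $-\mathrm{grad}_A\cL_{YM}$ is tangent to the $\cG_\C$-orbit through $A$, being the value at $A$ of the fundamental vector field $X_{\xi_A}$ of the $\cG_\C$-action generated by the Hermitian endomorphism $\xi_A:=i\,{*}F_A\in\Omega^0(M;\End E)=\mathrm{Lie}(\cG_\C)$ (up to a positive constant). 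Consequently the flow $(A_t)_{t\geq 0}$ is obtained by moving $A$ along a path $(g_t)_{t\geq 0}$ in $\cG_\C$ with $g_0=\Id_E$, solving the gauge-group ODE with time-dependent generator $\xi_{A_t}$, so that $A_t=g_t\cdot A$ for all $t$.

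The heart of the matter is then to show that this generator is $\beta$-invariant, so that the path $(g_t)$ stays in $\cG_\C^\tau$. By Lemma \ref{inv_under_flow} the flow preserves $\cA_E^{\,\tau}=\Fix(\beta)$, so each $A_t$ is $\beta$-fixed, and by Part (2) of Theorem \ref{Hamil_picture} the curvature $F_{A_t}=\beta(F_{A_t})$ is $\beta$-invariant; the goal is to deduce that $\xi_{A_t}=i\,{*}F_{A_t}$ is $\beta$-invariant as an element of $\Omega^0(M;\End E)$. I would argue this abstractly, avoiding explicit sign bookkeeping: differentiating Property (1) of Theorem \ref{Hamil_picture} at the fixed point $A_t$ gives $T_{A_t}\beta\cdot X_{\xi}(A_t)=X_{d\beta(\xi)}(A_t)$ for every $\xi$. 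Since Lemma \ref{inv_under_flow} tells us that $X_{\xi_{A_t}}(A_t)=-\mathrm{grad}_{A_t}\cL_{YM}$ lies in $\Fix(T_{A_t}\beta)=T_{A_t}(\cA_E^{\,\tau})$, we obtain $X_{\xi_{A_t}}(A_t)=X_{d\beta(\xi_{A_t})}(A_t)$. Because the fundamental vector field depends linearly on its generator, the $\beta$-symmetrized element $\tfrac12\big(\xi_{A_t}+d\beta(\xi_{A_t})\big)\in\Fix(d\beta)=\mathrm{Lie}(\cG_\C^\tau)$ produces exactly the same flow vector at $A_t$, so I may assume from the outset that the generator lies in $\mathrm{Lie}(\cG_\C^\tau)$. (Alternatively, one checks directly that $\beta(i\,{*}F_{A_t})=i\,{*}F_{A_t}$: the antilinearity of $\si$ absorbs the factor $i$, while the orientation-reversing map $\si$ makes the Hodge star anticommute with pullback on $2$-forms but commute on $0$-forms, and the two signs cancel, exactly as in the displayed computation $\si(i\tfrac{d}{r}\vol_M)=i\tfrac{d}{r}\vol_M$ used to show that $\si$ preserves projective flatness.)

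With a $\beta$-invariant, time-dependent generator and the $\beta$-invariant initial condition $g_0=\Id_E\in\cG_\C^\tau$, the solution $g_t$ of the gauge-group ODE stays in the closed subgroup $\cG_\C^\tau=\Fix(\beta)$ for all $t\geq 0$; hence $A_t=g_t\cdot A\in\cG_\C^\tau\cdot A$, as required. The Quaternionic case needs no separate treatment, since $\beta$ is involutive whether $\tau^2=\Id_E$ or $\tau^2=-\Id_E$ (as noted after Proposition \ref{tau_inv_conn}).

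The step I expect to be the main obstacle is establishing the $\beta$-invariance of the generator $i\,{*}F_{A_t}$, precisely because it rests on reconciling three separate pieces of structure: the $\beta$-invariance of the curvature from Theorem \ref{Hamil_picture}, the orientation-reversing nature of $\si$ interacting with the Hodge star, and the antilinearity that governs the factor $i$. The abstract symmetrization argument via Lemma \ref{inv_under_flow} and the equivariance in Theorem \ref{Hamil_picture} is what lets me sidestep the delicate sign computation and conclude cleanly.
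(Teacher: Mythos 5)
Your proposal is correct and follows essentially the same route as the paper: Lemma \ref{inv_under_flow} places the gradient in $T_A(\cA_E^{\,\tau})$, the Atiyah--Bott identity $-\mathrm{grad}_A\cL_{YM}=X_{i*F_A}(A)$ places it in $T_A(\cG_\C\cdot A)$, and the decisive step in both arguments is the same symmetrization $\xi\lmt\frac{1}{2}(\xi+d\beta(\xi))$ of the generator, justified by the equivariance in Theorem \ref{Hamil_picture}, showing that $T_A(\cA_E^{\,\tau})\cap T_A(\cG_\C\cdot A)=T_A(\cG_\C^{\tau}\cdot A)$. The only difference is presentational: the paper stops at this equality of tangent spaces, while you make the integration to a path $(g_t)$ in $\cG_\C^{\tau}$ explicit.
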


\begin{proof} If $A\in\cA_E^{\,\tau}$, then Lemma \ref{inv_under_flow} shows that $\mathrm{grad}_{A} \cL_{YM} \in T_A(\cA_E^{\,\tau})$. Moreover, by \cite[(8.12) p.572]{AB}, $\mathrm{grad}_{A} \cL_{YM}\in T_A(\cG_\C\cdot A)$. So $\mathrm{grad}_{A} \cL_{YM} \in T_A(\cA_E^{\,\tau})\cap T_A(\cG_\C\cdot A) = T_A (\cG_\C^{\tau}\cdot A)$, where the equality follows from the fact that if $X\in Lie\,\cG_\C$ and $Y:=\tau(X)-X$ lies in the infinitesimal stabilizer of $A$, then $X':=X+\frac{Y}{2}=\frac{X+\tau(X)}{2}\in Lie\,\cG_\C^{\tau}$ and induces the same infinitesimal action as $X$.
\end{proof}

\begin{proof}[Proof of Parts \emph{(2)} and \emph{(3)} of Theorem \ref{main_step_towards_NS}]
Since $A_{\infty}=\lim_{t\to+\infty} A_t$, one has, by Lemma \ref{moves_inside_orbit}, that $A_{\infty}\in\ov{\cG_\C^\tau\cdot A}$, so $\cG_\C^\tau\cdot A_{\infty}\subset \ov{\cG_\C^\tau\cdot A}$. If $B$ is another Galois-invariant connection with constant central curvature such that $\cG_\C^\tau\cdot B \subset \ov{\cG_\C^\tau\cdot A}$, then by the Daskalopoulos-R{\aa}de theorem, $\cG_\C\cdot A= \cG_\C\cdot B$. But if two minimal Yang-Mills connections lie in the same $\cG_\C$-orbit, then by Donaldson's theorem they lie in the same $\cG_E$-orbit. And if both connections are in addition Galois-invariant, then they lie in the same $\cG_E^{\,\tau}$-orbit (a detailed proof is available in \cite[Prop. 3.6]{Sch_JSG}). Thus, $\cG_{\C}^\tau\cdot A_{\infty}$ is the only poly-stable $\cG_\C^\tau$-orbit contained in $\ov{\cG_{\C}^\tau\cdot A}$ and $\cG_E^{\,\tau}\cdot A_{\infty}$ is the only $\cG_E^{\,\tau}$-orbit of minimal Yang-Mills connections contained in $\cG_{\C}^\tau\cdot A_{\infty}$, which proves Parts (2) and (3) of Theorem \ref{main_step_towards_NS}.
\end{proof}

\noindent As a corollary of Theorem \ref{main_step_towards_NS}, we obtain the following analog of a result of Daskalopoulos and R{\aa}de (\cite{Dask,Rade}). The proof is immediate because the retraction $r$ that we consider here is the restriction of the Daskalopoulos-R{\aa}de retraction and we have proved that the deformation occurs within $\cG_{\C}^{\tau}$-orbits in $\cA_E^{\,\tau}$.

\begin{corollary}\label{deformation_retract}
Recall that we have denoted by $\cC_{ss}^{\,\tau}\subset\cA_E^{\,\tau}$ the set of semi-stable $\tau$-compatible unitary connections on $E$. Then the map $$r:\begin{array}{ccc} \cC_{ss}^{\,\tau} & \lra & \fibre\cap\cA_E^{\,\tau}\\
A & \lmt & A_{\infty}
\end{array}$$ is a $\cG_E^{\,\tau}$-equivariant deformation retraction.
\end{corollary}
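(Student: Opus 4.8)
The plan is to obtain $r$ as nothing more than the restriction to the $\beta$-fixed locus of the Daskalopoulos--R\"ade deformation retraction on the full semi-stable locus $\cC_{ss}$, and then to check that each of the three defining properties of a deformation retraction, together with equivariance, is inherited by this restriction. Recall from \cite{Dask,Rade} that the Yang--Mills flow \eqref{YM_flow} defines a continuous map $H:\cC_{ss}\times[0,+\infty]\lra\cC_{ss}$, $H(A,t)=A_t$ (with $A_{+\infty}:=A_\infty$ the limit point), which is a $\cG_E$-equivariant deformation retraction of $\cC_{ss}$ onto $\fibre$. The substantive analytic content --- existence, uniqueness and continuity of the flow all the way up to $t=+\infty$ --- is entirely contained in that theorem and will simply be imported.

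First I would verify that $H$ restricts to $\cC_{ss}^{\,\tau}=\cC_{ss}\cap\cA_E^{\,\tau}$. By Lemma \ref{inv_under_flow}, for $A\in\cA_E^{\,\tau}=\Fix(\beta)$ one has $\mathrm{grad}_A\cL_{YM}\in T_A(\cA_E^{\,\tau})$, so the gradient line through $A$ stays in the closed affine subspace $\cA_E^{\,\tau}$; hence $A_t\in\cC_{ss}^{\,\tau}$ for every finite $t$, and since $\cA_E^{\,\tau}$ is closed the limit point $A_\infty=H(A,+\infty)$ lies in $\cA_E^{\,\tau}$ as well. Combined with Part (1) of Theorem \ref{main_step_towards_NS}, which identifies $A_\infty$ as a minimal Yang--Mills connection, this shows $H(\cC_{ss}^{\,\tau}\times[0,+\infty])\subset\cC_{ss}^{\,\tau}$ and that $H(\cdot,+\infty)$ takes values in $\fibre\cap\cA_E^{\,\tau}$, so the restricted homotopy has the correct domain and target.

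It then remains to confirm the three retraction axioms for the restriction. One has $H(A,0)=A_0=A$ by construction, and $H(A,+\infty)=A_\infty\in\fibre\cap\cA_E^{\,\tau}$ by the previous step. For the third axiom, if $A\in\fibre\cap\cA_E^{\,\tau}$ then $A$ already satisfies the momentum map equation \eqref{min_YM}, so it is a critical point of $\cL_{YM}$ and $\mathrm{grad}_A\cL_{YM}=0$; the flow issuing from $A$ is therefore stationary, giving $H(A,t)=A$ for all $t$ and in particular fixing $\fibre\cap\cA_E^{\,\tau}$ pointwise. Equivariance is equally direct: the functional $\cL_{YM}$ and the K\"ahler metric of $\cA_E$ are $\cG_E$-invariant, so the full flow $H$ is $\cG_E$-equivariant, and restricting the acting group to the subgroup $\cG_E^{\,\tau}\subset\cG_E$ yields the asserted $\cG_E^{\,\tau}$-equivariance of $r=H(\cdot,+\infty)$.

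I do not expect any genuine obstacle here. The only hard ingredient, continuity of the retraction up to the limiting time, is precisely the Daskalopoulos--R\"ade theorem that is being restricted, and the conceptual point that makes the restriction behave well --- that the deformation never leaves the $\cG_\C^\tau$-orbit inside $\cA_E^{\,\tau}$ --- has already been secured in Lemma \ref{moves_inside_orbit}. The one thing to state carefully is simply that restricting a deformation retraction to a closed, flow-invariant subspace, with the target cut down to the intersection with that subspace, is again a deformation retraction; all three axioms and equivariance survive verbatim, as checked above.
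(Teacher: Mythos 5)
Your proposal is correct and follows exactly the paper's own (very brief) argument: the retraction is the restriction of the Daskalopoulos--R\r{a}de retraction to the closed, flow-invariant subspace $\cA_E^{\,\tau}$, with the invariance supplied by Lemma \ref{inv_under_flow} and Part (1) of Theorem \ref{main_step_towards_NS}, and equivariance inherited from the $\cG_E$-equivariance of the full flow. You merely spell out the retraction axioms that the paper leaves implicit.
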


\noindent Since polar decomposition in $\cG_{\C}^{\,\tau}$ induces a deformation retraction from $\cG_{\C}^{\tau}$ to $\cG_E^{\,\tau}$, the $\cG_\C^{\tau}$-equivariant cohomology of $\cC_{ss}^{\,\tau}$ coincides with its $\cG_E^{\,\tau}$-equivariant cohomology and Corollary \ref{deformation_retract} implies that $H_{\cG_E^{\,\tau}}^{\,*}(\fibre\cap\cA_E^{\,\tau}) \simeq H_{\cG_{E}^{\,\tau}}^{\,*}(\cC_{ss}^{\,\tau})\simeq H_{\cG_{\C}^\tau}^{\,*}(\cC_{ss}^{\,\tau})$, which is the approach that was implemented in \cite{LS} to compute the equivariant cohomology with mod $2$ coefficients of the moduli spaces $\cM^{ss}(E,\tau)$, whose presentation as a quotient was described in \eqref{GIT_pic_Real_quat_case}.

\section{Invariant connections and parallel transport}\label{inv_conn_and_par_transport}

\subsection{Parallel transport and holonomy representations}\label{parallel_transport}

Let $E$ be a smooth complex vector bundle over a Klein surface $(M,\si)$. Recall that we denote by $\si(E)$ the complex vector bundle $\ov{(\si^{-1})^*E}$. Let $A$ be a linear connection on $E$. Given a path $\ga:[0;1]\lra M$, we denote by $T_{\,\ga}^A:E_{\ga(0)} \lra E_{\ga(1)}$ the parallel transport operator along $\ga$ with respect to the connection $A$: if $v\in E_{\ga(0)}$, then $T_{\,\ga}^A(v) = \widetilde{\ga}_A^{(v)}(1)$, where $\widetilde{\ga}_A^{(v)}$ is the unique $A$-horizontal lifting of $\ga$ satisfying $\widetilde{\ga}_A^{(v)}(0)=v$. We then have the following general result (which holds without assuming that $E$ is self-conjugate).

\begin{lemma}\label{parallel_transport_and_pullback_conn}
Denote by $\si(\ga)$ the path $\si\circ\ga:[0;1]\lra M$ and let $\si_E:E\lra \si(E)$ be the map defined in \eqref{pullback_map}. Then the parallel transport operator along the connection $\si(A)$ induced by $A$ on $\si(E)$ satisfies $T_{\si(\ga)}^{\si(A)} = \si_E T_{\,\ga}^A\si_E^{-1}$.
\end{lemma}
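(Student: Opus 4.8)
The claim is an equivariance property: the parallel transport operator $T^{\si(A)}_{\si(\ga)}$ along the induced connection $\si(A)$ on $\si(E)$, computed along the image path $\si(\ga)$, equals the conjugate $\si_E\, T^A_\ga\, \si_E^{-1}$ of the original parallel transport operator. Both sides are $\C$-linear maps $\si(E)_{\ga(0)} \to \si(E)_{\ga(1)}$ — wait, let me check the source/target. $T^{\si(A)}_{\si(\ga)}$ goes from $\si(E)_{\si(\ga)(0)} = \si(E)_{\si(\ga(0))}$ to $\si(E)_{\si(\ga(1))}$. And $\si_E$ maps $E_m \to \si(E)_{\si(m)}$ (it covers $\si$). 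So $\si_E^{-1}$ maps $\si(E)_{\si(\ga(0))} \to E_{\ga(0)}$, then $T^A_\ga$ maps to $E_{\ga(1)}$, then $\si_E$ maps to $\si(E)_{\si(\ga(1))}$. Good, both sides match up as maps $\si(E)_{\si(\ga(0))} \to \si(E)_{\si(\ga(1))}$.

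**How I'd prove it.**

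The natural strategy is to use the defining (uniqueness) property of parallel transport: horizontal lifts are unique, so to identify two parallel transport operators it suffices to show that applying the right-hand side to a vector produces a horizontal lift of $\si(\ga)$ with respect to $\si(A)$.

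Concretely: fix $w \in \si(E)_{\si(\ga(0))}$, write $v = \si_E^{-1}(w) \in E_{\ga(0)}$. Let $\widetilde{\ga}^{(v)}_A$ be the $A$-horizontal lift of $\ga$ starting at $v$. The plan is to show that the path $t \mapsto \si_E(\widetilde{\ga}^{(v)}_A(t))$ is the $\si(A)$-horizontal lift of $\si(\ga)$ starting at $\si_E(v) = w$. Then by uniqueness its endpoint is $T^{\si(A)}_{\si(\ga)}(w)$, while by construction the endpoint is $\si_E(\widetilde{\ga}^{(v)}_A(1)) = \si_E(T^A_\ga(v)) = \si_E T^A_\ga \si_E^{-1}(w)$, giving the identity.

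**Key steps and the main obstacle.**

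First I would recall the local description: if $A$ is represented in a local frame by connection matrices $(a_i)$, then by the very definition of $\si(A)$ given just before Lemma~\ref{parallel_transport_and_pullback_conn} (and in the construction of $\si(\cE)$ in the Stability subsection), $\si(A)$ is represented by $\ov{a_i \circ \si^{-1}}$. Second, I would write the horizontality ODE for $\widetilde{\ga}^{(v)}_A$ in a local frame and apply $\si_E$, which acts fiberwise as complex conjugation of coordinates composed with the base map $\si$. The heart of the argument is checking that conjugating the horizontality equation for $A$ along $\ga$ turns it precisely into the horizontality equation for $\si(A)$ along $\si(\ga)$ — the complex conjugation in the definition of $\si(E)$ and $\si(A)$ exactly compensates the complex conjugation built into $\si_E$. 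The anti-linearity of $\si_E$ is essential here and is what makes the two conjugations cancel.

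The main obstacle is bookkeeping rather than conceptual: one must handle the reparametrization by $\si$ in the base carefully (the connection matrix $\si(A)$ is evaluated at $\si(\ga(t)) = \si(\ga)(t)$, and the chain rule contributes a pullback $\si^*$ that matches the $\circ\,\si^{-1}$ in the definition of $\si(A)$), and track that $\si_E$ is $\C$-antilinear so that conjugates land in the right place. I expect no genuine difficulty beyond verifying that these conjugations and pullbacks cancel coherently; once the horizontal-lift candidate is shown to satisfy the correct ODE with the correct initial condition $\si_E(v) = w$, uniqueness of horizontal lifts closes the argument immediately.
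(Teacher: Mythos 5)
Your proposal is correct and follows essentially the same route as the paper: the paper's proof also invokes uniqueness of horizontal liftings to identify $\si_E\circ\widetilde{\ga}_A^{(v)}$ with the $\si(A)$-horizontal lift of $\si(\ga)$ starting at $\si_E(v)$, summarized there by a commutative diagram. Your additional local-coordinate verification that conjugation intertwines the two horizontality ODEs is exactly the content the paper leaves implicit in the phrase ``this follows directly from the definition of parallel transport in $\si(E)$.''
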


\begin{proof}
This follows directly from the definition of parallel transport in $\si(E)$. Indeed, by unicity of horizontal liftings, one has the following commutative diagram
$$
\xymatrix{
 & E \ar[rr]^{\si_{E}} \ar[d] & & \si(E) \ar[d] \\
[0;1] \ar[r]^{\ga} \ar@{-->}[ru]^{\widetilde{\ga}_{A}^{(v)}} \ar@{-->}[rrru]^{\widetilde{\si(\ga)}_{\si(A)}^{\si_E(v)}} & M \ar[rr]^{\si} & & M,
}
$$ which proves the lemma.
\end{proof}

\noindent Let us now fix a class $[c]\in H^2(\Si;\cZ(\GL(r;\C)))\simeq\{\pm 1\}$. Since $\Si\simeq \Z/2\Z$, we can just think of the cocycle $c$ itself as being $\pm 1$ (see \eqref{obs_on_H2}). We assume from now on that $E$ is either Real or Quaternionic, meaning that there is given an isomorphism $\phi_\si:\si(E)\lra E$ satisfying $\si(\phi_\si)=c\phi_\si^{-1}$ and we set $\tau=\phi_\si\si_E$, as in \eqref{real_or_quat_structure_on_self_conjugate_bundles}, so that $\tau^2=c\Id_E$. Recall from \eqref{the_involution} that in this situation we have an induced action of $\Si$ on the space of linear connections on $E$, defined by the involution $\beta(A) = (\phi_{\si}^{-1})^*\si(A)$.

\begin{proposition}\label{par_transport_inv_conn}
Let $(E,\tau)$ be a Real or Quaternionic vector bundle and let $A$ be a $\Si$-invariant connection on $E$. Then the parallel transport operators and $T_{\,\ga}^A$ and $T^A_{\si(\ga)}$ satisfy $T^A_{\si(\ga)} = \tau T_{\,\ga}^A\tau^{-1}$. Equivalently, we have a commutative diagram
$$
\begin{CD}
E_{\si(\ga(0))} @>{T^A_{\si(\ga)}}>> E_{\si(\ga(1))} \\ 
@A{\tau}AA @A{\tau}AA \\
E_{\ga(0)} @>{T^A_{\,\ga}}>> E_{\ga(1)}.
\end{CD}
$$
\end{proposition}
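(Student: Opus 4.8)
The plan is to reduce everything to the already-proved Lemma \ref{parallel_transport_and_pullback_conn} together with a single observation about how parallel transport behaves under the pullback of a connection along a bundle isomorphism, the $\Si$-invariance of $A$ entering precisely at that point. First I would recall that Lemma \ref{parallel_transport_and_pullback_conn} gives, for the induced connection $\si(A)$ on $\si(E)$, the identity $T^{\si(A)}_{\si(\ga)} = \si_E\, T^A_{\,\ga}\, \si_E^{-1}$, with no hypothesis beyond the definitions. So the only extra input needed is a way to pass from $\si(A)$ on $\si(E)$ back to $A$ on $E$ through the isomorphism $\phi_\si$.

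The key step is to unwind the defining formula $d_{(\phi_\si^{-1})^*\si(A)}(s) = \phi_\si\big(d_{\si(A)}(\phi_\si^{-1}s)\big)$ of \eqref{the_involution} at the level of horizontal lifts. This formula says exactly that the $\C$-linear bundle map $\phi_\si^{-1}:E\lra\si(E)$, which covers $\Id_M$, carries $(\phi_\si^{-1})^*\si(A)$-horizontal curves to $\si(A)$-horizontal curves; hence it intertwines the corresponding parallel transports, giving $T^{\si(A)}_{\,\ga}\circ\phi_\si^{-1} = \phi_\si^{-1}\circ T^{(\phi_\si^{-1})^*\si(A)}_{\,\ga}$ for every path $\ga$, that is $T^{(\phi_\si^{-1})^*\si(A)}_{\,\ga} = \phi_\si\, T^{\si(A)}_{\,\ga}\,\phi_\si^{-1}$. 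This is where the hypothesis is used: since $A\in\cA_E^{\,\tau}=\Fix(\beta)$, we have $(\phi_\si^{-1})^*\si(A)=\beta(A)=A$, so the left-hand side is simply $T^A_{\,\ga}$, and we obtain, valid for all paths,
$$T^A_{\,\ga} = \phi_\si\, T^{\si(A)}_{\,\ga}\,\phi_\si^{-1}.$$

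To finish, I would apply this last identity to the path $\si(\ga)$ in place of $\ga$ and then substitute the Lemma, which yields
$$T^A_{\si(\ga)} = \phi_\si\, T^{\si(A)}_{\si(\ga)}\,\phi_\si^{-1} = \phi_\si\,\si_E\, T^A_{\,\ga}\,\si_E^{-1}\,\phi_\si^{-1} = (\phi_\si\si_E)\, T^A_{\,\ga}\,(\phi_\si\si_E)^{-1} = \tau\, T^A_{\,\ga}\,\tau^{-1},$$
using $\tau=\phi_\si\si_E$ from \eqref{real_or_quat_structure_on_self_conjugate_bundles} and $(\phi_\si\si_E)^{-1}=\si_E^{-1}\phi_\si^{-1}$. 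A quick check of base points confirms the maps compose correctly: $\tau$ covers $\si$, so $\tau:E_{\ga(0)}\lra E_{\si(\ga(0))}$, and $\tau T^A_{\,\ga}\tau^{-1}$ sends $E_{\si(\ga(0))}$ to $E_{\si(\ga(1))}$, matching $T^A_{\si(\ga)}$, which is the commutativity of the displayed diagram. I expect the only genuinely delicate point to be the pullback step: one must argue cleanly, via uniqueness of horizontal lifts exactly as in the proof of Lemma \ref{parallel_transport_and_pullback_conn}, that $\phi_\si^{-1}$ intertwines parallel transport for $A$ and $\si(A)$, and be careful that this rests on the invariance $\beta(A)=A$ rather than on any self-conjugacy of $A$ by itself; everything else is bookkeeping of the maps $\si_E$ and $\phi_\si$ and their composition into $\tau$.
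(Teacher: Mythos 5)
Your proposal is correct and follows essentially the same route as the paper: both arguments combine Lemma \ref{parallel_transport_and_pullback_conn} with the fact that $\phi_\si$ conjugates the parallel transport of $\si(A)$ into that of $(\phi_\si^{-1})^*\si(A)=\beta(A)=A$, and then assemble $\tau=\phi_\si\si_E$. The only difference is presentational — you spell out the intertwining step for the pullback connection via uniqueness of horizontal lifts, which the paper leaves implicit in its chain of equalities.
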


\begin{proof}
By Lemma \ref{parallel_transport_and_pullback_conn}, $T_{\,\ga}^A = \si_E^{-1}T_{\si(\ga)}^{\si(A)}\si_E = (\phi_\si\si_E)^{-1}(\phi_\si T_{\si(\ga)}^{\si(A)}\phi_\si^{-1})(\phi_\si\si_E)$, hence $T_{\,\ga}^A = \tau^{-1}T_{\si(\ga)}^{(\phi_\si^{-1})^*\si(A)}\tau= \tau^{-1} T_{\si(\ga)}^A\tau$, which proves the proposition. 
\end{proof}

\noindent Finally, let us choose a point $x\in M$ and a frame of $E_x$ (i.e.\ a $\C$-linear isomorphism $E_x\simeq \C^r$). The holonomy group of a connection $A$ at $x$ is the sub-group $\Holx$ of $\GL(r;\C)$ generated by all parallel transport operators $T_{\ga}^A$ along loops at $x$. We will now show that when $A$ is $\Si$-invariant, we can enlarge the holonomy group to obtain a $\Si$-augmentation $\HolxSi$ making the following diagram commute:

\begin{equation}\label{enlarged_hol_picture}
\begin{CD}
1 @>>> \Holx @>>> \HolxSi @>>> \Si @>>> 1 \\
@. @VVV @VVV \| @.\\
1 @>>> \GL(r;\C) @>>> \GL(r;\C)\times_c \Si @>>> \Si @>>> 1
\end{CD}
\end{equation}

\noindent To do that, we use the description of $\piRx$ in terms of paths in $M\times E\Si$ recalled in Section \ref{orbifold_groups}. Based on the definition of the set $\widetilde{P}_x$ in \eqref{paths_description}, we saw that the group $\piRx$ could be thought of as the set of homotopy classes of pairs $\eta=(\ga,\lambda)$ where $\ga:[0;1] \lra M$ and $\lambda\in\Si$ satisfy $\ga(0)=x$ and $\ga(1)=\lambda^{-1}(x)$, the composition $\eta_1\eta_2$ of two such pairs being defined as in \eqref{composition_of_paths}. The parallel transport operator is a $\C$-linear map $T_\ga^A:E_x\lra E_{\lambda^{-1}(x)}$
 and, by composing it by $\tau_\lambda$ (which we define to be either $\Id_E$ or $\tau$, depending on whether $\lambda=1_\Si$ or $\si$), we obtain a map $\tau_\lambda\circ T_\ga^A:E_x\lra E_x$, which is $\C$-linear or $\C$-anti-linear, depending on $\lambda\in\Si$. 
 
\begin{definition}\label{matrix_form}
Given $\eta=(\ga,\lambda)\in\widetilde{P}_x$, the matrix of the map $\tau_\lambda\circ T_\ga^A:E_x\lra E_x$ in the given frame of $E_x$ is the unique element $g_\ga\in\GL(r;\C)$ such that, for all $v\in\C^r\simeq E_x$,  $(\tau_\lambda\circ T_\ga^A)(v) = g_\ga \eps_\lambda(v)$, where $\eps_\lambda(v)= v$ if $\lambda=1_\Si$ and $\eps_\lambda(v)=\ov{v}$ if $\lambda=\si\in \Si$.
\end{definition}

\noindent Recall that $\Si$ acts on $\GL(r;\C)$ via $\si(g)=\ov{g}$. Then the map $\eps:\Si\lra \GL(r;\C)\rtimes\Si$ introduced in Definition \ref{matrix_form} is a group homomorphism and a section of the natural projection $\GL(r;\C)\rtimes\Si\lra\Si$. In particular, $\eps_\si g \eps_\si^{-1} = \si(g)=\ov{g}$. This will be useful in the proof of the next result.

\begin{theorem}\label{extended_holonomy}
Let $(E,\tau)$ be a Real or Quaternionic vector bundle and let $A$ be a Galois-invariant connection on $E$. Then there exists an extended holonomy group $$\HolxSi:=\{(g_\ga,\lambda) : \eta=(\ga,\lambda)\in\widetilde{P}_x\}$$ which, endowed with the projection homomorphism $(g_\ga,\lambda)\lmt\lambda$, becomes a sub-$\Si$-augmentation of $\GL(r;\C)\times_c\Si$ in the sense of Diagram \eqref{enlarged_hol_picture}, where $c=+1$ is $E$ is Real and $c=-1$ if $E$ is Quaternionic.\\ If $E$ is Hermitian and we choose $c$ to lie in $H^2(\Si;\cZ(\U(n)))\simeq\{\pm1\}$, then the extended holonomy group $\HolxSi$ is a sub-$\Si$-augmentation of $\U(r)\times_c\Si$.
\end{theorem}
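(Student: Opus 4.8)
The plan is to show that the assignment $\eta=(\ga,\lambda)\lmt(g_\ga,\lambda)$, with $g_\ga$ the matrix of Definition \ref{matrix_form}, intertwines the composition \eqref{composition_of_paths} on $\widetilde{P}_x$ with the group law \eqref{gp_law_on_our_aug} of $\GL(r;\C)\times_c\Si$; the extended holonomy group $\HolxSi$ is then the image of this map, and since a subset of a group that contains the identity and is closed under products and inverses is a subgroup, all the structural claims follow once multiplicativity is established. First I would record the two ingredients that drive the computation: Proposition \ref{par_transport_inv_conn}, in the form $T^A_{\lambda(\ga)}=\tau_\lambda\,T^A_{\ga}\,\tau_\lambda^{-1}$ for every $\lambda\in\Si$ (using $\lambda^{-1}=\lambda$ since $\Si\simeq\Z/2\Z$, so that $\lambda_2^{-1}(\ga_1)=\lambda_2(\ga_1)$ in \eqref{composition_of_paths}), and the relation $\tau_{\lambda}\tau_{\lambda'}=c(\lambda,\lambda')\,\tau_{\lambda\lambda'}$, where $c$ is the normalized $2$-cocycle with $c(\si,\si)=c$ forced by $\tau^2=c\,\Id_E$. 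I would also use that $\eps$ is a homomorphic section, so that $\eps_{\lambda}\eps_{\lambda'}=\eps_{\lambda\lambda'}$ and $\eps_\lambda\circ g=\si_\lambda(g)\circ\eps_\lambda$ for $g\in\GL(r;\C)$.

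The core step is the explicit computation of $\tau_{\lambda_1\lambda_2}\circ T^A_{\eta_1\eta_2}$ for $\eta_i=(\ga_i,\lambda_i)$, viewed as a map $E_x\lra E_x$. I would substitute $T^A_{\eta_1\eta_2}=T^A_{\lambda_2(\ga_1)}\,T^A_{\ga_2}=\tau_{\lambda_2}T^A_{\ga_1}\tau_{\lambda_2}^{-1}T^A_{\ga_2}$ and then write each $T^A_{\ga_i}=\tau_{\lambda_i}^{-1}(\tau_{\lambda_i}T^A_{\ga_i})$ so as to reintroduce the maps computed by $g_{\ga_i}$. Collecting the $\tau$-factors and using $\tau_{\lambda}^2=c(\lambda,\lambda)\,\Id_{E_x}$ together with the cocycle relation, the prefactor collapses to the single scalar $c(\lambda_1,\lambda_2)^{-1}$; here one must note that the emerging scalars are real ($\pm1$) and hence pass unchanged through the anti-linear maps. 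One is left with $\tau_{\lambda_1\lambda_2}\circ T^A_{\eta_1\eta_2}=c(\lambda_1,\lambda_2)^{-1}\,(\tau_{\lambda_1}T^A_{\ga_1})(\tau_{\lambda_2}T^A_{\ga_2})$, and translating into matrices via $\eps_{\lambda_1}\circ g_{\ga_2}=\si_{\lambda_1}(g_{\ga_2})\circ\eps_{\lambda_1}$ and $\eps_{\lambda_1}\eps_{\lambda_2}=\eps_{\lambda_1\lambda_2}$ yields $g_{\eta_1\eta_2}=c(\lambda_1,\lambda_2)^{-1}g_{\ga_1}\si_{\lambda_1}(g_{\ga_2})$, which is exactly the $\GL(r;\C)\times_c\Si$-product of $(g_{\ga_1},\lambda_1)$ and $(g_{\ga_2},\lambda_2)$ read off from \eqref{gp_law_on_our_aug}.

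From here the structural statements are routine. The constant path at $x$ with $\lambda=1_\Si$ gives the identity $(\Id,1_\Si)$; given $(\ga,\lambda)$, the reversed path $\lambda(\ga^{-1})$ yields an element which, by the multiplicativity just shown (and the fact that $T^A$ along a path followed by its reverse is the identity for any connection), is the inverse of $(g_\ga,\lambda)$, so the image is closed under inverses and is a genuine subgroup. The elements with $\lambda=1_\Si$ are precisely the $(T^A_\ga,1_\Si)$ for loops $\ga$ at $x$, i.e.\ the ordinary holonomy group $\Holx$, which is therefore the kernel of the augmentation $(g_\ga,\lambda)\lmt\lambda$; connectedness of $M$ furnishes a path from $x$ to $\si(x)$, so this augmentation is surjective. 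Thus Diagram \eqref{enlarged_hol_picture} commutes and $\HolxSi$ is a sub-$\Si$-augmentation of $\GL(r;\C)\times_c\Si$. For the Hermitian refinement I would work in an orthonormal frame of $E_x$: unitarity of $A$ makes each $T^A_\ga$ unitary and the isometry property of $\tau$ makes each $\tau_\lambda T^A_\ga$ a ($\C$-antilinear, when $\lambda=\si$) isometry, so every $g_\ga$ lies in $\U(r)$, while $c=\pm I_r\in\cZ(\U(r))$; hence $\HolxSi\subset\U(r)\times_c\Si$. I expect the main obstacle to be the bookkeeping in the core computation — tracking which composites are $\C$-antilinear, checking that the scalar produced by $\tau^2=c\,\Id_E$ is exactly $c(\lambda_1,\lambda_2)$ and commutes past the anti-linear factors, and reconciling the $c^{\pm1}$ convention of \eqref{gp_law_on_our_aug} (which is harmless since $c=\pm1$).
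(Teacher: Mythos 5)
Your proposal is correct and follows essentially the same route as the paper: the whole content is the multiplicativity identity $(g_{\ga_1\ga_2},\la_1\la_2)=(g_{\ga_1},\la_1)(g_{\ga_2},\la_2)$, established exactly as you do by combining Proposition \ref{par_transport_inv_conn} with the cocycle relation $\tau_{\la_1}\tau_{\la_2}=c(\la_1,\la_2)\tau_{\la_1\la_2}$ to get $\tau_{\la_1\la_2}T^A_{\la_2^{-1}(\ga_1)\ga_2}=c(\la_1,\la_2)^{-1}(\tau_{\la_1}T^A_{\ga_1})(\tau_{\la_2}T^A_{\ga_2})$ and then translating into matrices via $\eps_{\la}g\eps_{\la}^{-1}=\la(g)$. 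Your extra remarks on identities, inverses, the kernel being $\Holx$, and the unitary refinement are correct and only make explicit what the paper leaves as routine.
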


\begin{proof}
It suffices to show that, for all $\eta_ 1=(\ga_1,\la_1)$, $\eta_2=(\ga_2,\la_2)$ in $\widetilde{P}_x$, the equality
\begin{equation}\label{product_in_hol_gp}
(g_{\ga_1\ga_2},\lambda_1\lambda_2) = (g_{\ga_1},\lambda_1)(g_{\ga_2},\lambda_2)
\end{equation} holds in $\GL(r;\C)\times_c\Si$. For the proof, it is convenient to think of $\Id_E$ and $\tau$ as a family of maps $(\tau_\la)_{\la\in\Si}$ such that, for all $(\la_1,\la_2)\in\Si\times\Si$, $\tau_{\la_1}\tau_{\la_2} = c(\la_1,\la_2)\tau_{\la_1\la_2}$, where $c$ is now seen as an abstract $2$-cocycle. By \eqref{composition_of_paths}, we have that $\eta_1\eta_2=(\lambda_2^{-1}(\ga_1)\ga_2,\lambda_1\lambda_2)$ so, by Definition \ref{matrix_form}, $g_{\ga_1\ga_2}\in \GL(r;\C)$ is the matrix such that for all $v\in\C^r\simeq E_x$, $$(\tau_{\lambda_1\lambda_2}T^A_{\lambda_2^{-1}(\ga_1)\ga_2})(v) = g_{\ga_1\ga_2}\eps_{\lambda_1\lambda_2}(v).$$ By the usual properties of parallel transport combined with Proposition \ref{par_transport_inv_conn}, we have $T^A_{\lambda_2^{-1}(\ga_1)\ga_2} = T^A_{\lambda_2^{-1}(\ga_1)} T^A_{\ga_2} = (\tau_{\lambda_2^{-1}}T^A_{\ga_1}\tau_{\lambda_2^{-1}}^{-1}) T^A_{\ga_2}$ and, since $\tau_{\lambda_2^{-1}} = c(\lambda_2,\lambda_2^{-1}) \tau_{\lambda_2}^{-1}$ and $c(\la_2,\la_2^{-1})\in \cZ(\GL(r;\C))$, we obtain $T^A_{\lambda_2^{-1}(\ga_1)\ga_2} = (\tau_{\la_2}^{-1} T^A_{\ga_1} \tau_{\la_2}) T^A_{\ga_2}$, hence 
\begin{equation}\label{towards_matrix_form}
\tau_{\la_1\la_2}T^A_{\la_2^{-1}(\ga_1)\ga_2} = c(\la_1,\la_2)^{-1} (\tau_{\la_1}T^A_{\ga_1})(\tau_{\la_2} T^A_{\ga_2}).
\end{equation} Moreover, for all $v\in\C^r\simeq E_x$, 
\begin{eqnarray*}
(\tau_{\la_1} T^A_{\ga_1}) (\tau_{\la_2} T^A_{\ga_2}) (v) = g_{\ga_1}\eps_{\la_1}\big(g_{\ga_2}\eps_{\la_2}(v)\big)
& = & g_{\ga_1} (\eps_{\la_1} g_{\ga_2} \eps_{\la_1}^{-1})\eps_{\la_1}\eps_{\la_2} (v) \\
& = & g_{\ga_1} \la_1(g_{\ga_2}) \eps_{\la_1\la_2} (v)
\end{eqnarray*} where, on the last line, $\la(g)=g$ if $\la=1_\Si$ and $\la(g)=\ov{g}$ if $\la=\si\in\Si$, as noted just after Definition \ref{matrix_form}. So, in matrix form, Equation \eqref{towards_matrix_form} becomes 
$$\big(g_{\ga_1\ga_2},\la_1\la_2\big) = \big(c(\la_1,\la_2)^{-1},1\big) \big(g_{\ga_1}\la_1(g_{\ga_2}),\la_1\la_2\big) = (g_{\ga_1},\la_1\big)\big(g_{\ga_2},\la_2\big),$$
where the last equality follows from the definition of the group law in $\GL(r;\C)\times_c\Si$, which was recalled in \eqref{gp_law_on_our_aug}. Thus, we have indeed proved Equality \eqref{product_in_hol_gp}.
\end{proof}

\subsection{Orbifold representations associated to invariant connections}

We now investigate the consequences of Theorem \ref{extended_holonomy} for the Narasimhan-Seshadri correspondence. We have seen in Theorem \ref{main_step_towards_NS} that  a poly-stable Real or Quaternionic structure on a Hermitian vector bundle $(E,\tau)$ of rank $r$ and degree $d$ is defined by a Galois-invariant, projectively flat unitary connection $A$ which is unique up to $\tau$-unitary gauge. By Theorem \ref{extended_holonomy}, the extended holonomy group of such a connection $A$ fits into the commutative diagram 
\begin{equation}\label{extended_hol_map_diagram}
\begin{CD}
1 @>>> \widetilde{L}_x @>>> \widetilde{P}_x @>>> \Si @>>> 1\\
@. @VVV @VV{\widetilde{\chi}}V \| @.\\
1 @>>> \Holx @>>> \HolxSi @>>> \Si @>>> 1
\end{CD}
\end{equation} where the first row is a short exact sequence of pointed sets and the extended holonomy group $\HolxSi$ is a sub-$\Si$-augmentation of $\U(r)\times_c\Si$ ($c$ being equal to $\pm 1$, depending on whether $\tau$ is Real or Quaternionic). In the present section, we wish to answer the following question: when does the pointed map $\widetilde{\chi}:\widetilde{P}_x \lra \U(r)\times_c\Si$ in Diagram \eqref{extended_hol_map_diagram} induce a group homomorphism 
\begin{equation}\label{top_trivial_case} \chi:(\piRx\simeq \widetilde{P}_x/\mathrm{homotopy}) \lra \U(r)\times_c\Si\ ?
\end{equation} As for the usual fundamental group $\piCx$, this can of course not happen unless $d=0$. Indeed, for $A$ projectively flat, the pointed map $\widetilde{\rho}:=\widetilde{\chi}|_{\widetilde{L}_x}:\widetilde{L}_x\lra \U(r)$ only induces a group homomorphism $\overline{\rho}:\piCx\lra \PU(r)$ in general. Equivalently, $\widetilde{\rho}$ induces a pointed map $\overline{\rho}:\piCx\lra\U(r)$ which is not a group homomorphism but satisfies \begin{equation}\label{almost_gp_hom_first}
\overline{\rho}(\ga_1\ga_2) = e^{i\frac{2\pi}{r}\alpha(\ga_1,\ga_2)}\overline{\rho}(\ga_1)\overline{\rho}(\ga_2),
\end{equation} where $\alpha:\piCx\times\piCx \lra \Z$ is a $2$-cocycle representing the cohomology class $d=c_1(E)\in H^2(M;\Z)$. As we saw in Section \ref{construction_of_bundles_section}, $d$ is also the class of the (central) extension $\piL$ of $\piCx$ by $\Z$, where $S(L)$ is the Seifert manifold associated to an arbitrary smooth line bundle $L$ of degree $d$. Then, since $1\lra S^1 \lra \U(r) \lra \PU(r) \lra 1$ is a central extension, a pointed map $\overline{\rho}$ satisfying \eqref{almost_gp_hom_first} induces a group homomorphism $\rho:\piL\lra\U(r)$, satisfying, for all $n\in\Z=\cZ(\piL)$, $\rho(n) =\exp(i\frac{2\pi}{r}n)I_r$. In the Real or Quaternionic case, the situation is very similar. The pointed map $\widetilde{\chi}: \widetilde{P}_x\lra \U(r)\times_c\Si$ induces a group homomorphism $\overline{\chi}: \piRx \lra \PU(r)\rtimes\Si$, where $\Si$ acts on $\PU(r)$ by complex conjugation, or equivalently a pointed map $\overline{\chi}:\piRx\lra  \U(r)\times_c\Si$, compatible with the projections to $\Si$ and satisfying 
\begin{equation}\label{almost_gp_hom_secalmost_gp_second}
\overline{\chi}(\eta_1\eta_2) = e^{i\frac{2\pi}{r}\at(\eta_1,\eta_2)} \overline{\chi}(\eta_1)\overline{\chi}(\eta_2),
\end{equation} where $\at:\piRx\times\piRx\lra \Z$ is any $2$-cocycle extending the previous cocycle $\alpha$. Since $\alpha$ was representing a smooth line bundle of degree $d$ and $\at$ represents a smooth Real line bundle $(L,\tauL)$, saying that $\at$ extends $\alpha$ is equivalent to saying that $L$ has degree $d$. By Theorem \ref{top_classif}, the isomorphism class of such an $(L,\tauL)$ is unique when $M^\si=\emptyset$ but, when $M^\si$ has, say, $n>0$ connected components, then there are $2^{n-1}$ possibilities for the isomorphism class of $(L,\tauL)$. This constitutes a difference with the complex case, where the isomorphism class of the line bundle $L$ depended only on $d$. But this is not an issue because, by Remark \ref{ext_class_vs_aug_class}, any two such choices will give rise to isomorphic $\Si$-augmentations $\piLR$. Once chosen a $\tauL$, we obtain, as in Section \ref{construction_of_bundles_section}, a Real Seifert manifold $(S(L),\tauL)$ and an associated (non-central) extension $\piLR$ of $\piRx$ by $\Z$. Then, since $1\lra S^1\lra  \U(r)\times_c \Si \lra \PU(r)\rtimes\Si\lra 1$ is a central extension, a pointed map $\overline{\chi}:\piRx\lra \U(r)\times_c\Si$ compatible with the projections to $\Si$ and satisfying Condition \eqref{almost_gp_hom_secalmost_gp_second} induces a homomorphism of $\Si$-augmentations $\chi:\piLR\lra \U(r)\times_c\Si$, satisfying, for all $n\in\Z$, $\chi(n) = \exp(i\frac{2\pi}{r}n)I_r$, as in Diagram \eqref{morphisms_of_extensions} and Equation \eqref{cond_on_morphisms}. By Remark \ref{generalized_orbifold_rep_space}, when $d=0$, we can think of such a $\chi$ as a homomorphism of $\Si$-augmentations from $\piRx$ to $\U(r)\times_c\Si$, as expected in \eqref{top_trivial_case}. We have therefore proved the following result.
\begin{theorem}\label{holonomy_rep}
Let $(E,\tau)$ be a smooth Real or Quaternionic vector bundle of rank $r$ and degree $d$. Let $c=\pm 1$ be defined by the equation $\tau^2=c\Id_E$. Let $(L,\tauL)$ be any smooth Real line bundle of degree $d$ and let $(S(L),\tauL)$ be the associated Real Seifert manifold. Let $A$ be a Galois-invariant, projectively flat unitary connection on $E$. Then, taking the extended holonomy of the connection $A$ in the sense of Theorem \ref{extended_holonomy} induces a homomorphism of $\Si$-augmentations $\chi_A:\piLR\lra \U(r)\times_c\Si$, whose $\U(r)$-conjugacy class only depends on the $\tau$-unitary gauge orbit of $A$.
\end{theorem}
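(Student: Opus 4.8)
The plan is to promote the extended holonomy of Theorem \ref{extended_holonomy}, which a priori is only a multiplicative map $\widetilde{\chi}$ on the set of \emph{paths} $\widetilde{P}_x$, into an honest homomorphism of $\Si$-augmentations on the orbifold fundamental group $\piLR$, and then to analyse how $\chi_A$ transforms under the two auxiliary choices (the unitary frame of $E_x$ and the connection within its $\tau$-unitary gauge orbit). The construction itself follows the discussion preceding the statement; the genuinely new point to settle is the conjugacy-invariance.

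First I would extract the consequence of projective flatness. Since $A$ is projectively flat, the induced connection on the associated $\PU(r)$-bundle is flat, so $\U(r)$-parallel transport along a loop depends on its homotopy class only up to the center $S^1\subset\U(r)$; concretely, $\widetilde{\chi}|_{\widetilde{L}_x}$ descends modulo $S^1$ to a genuine homomorphism $\piCx\lra\PU(r)$, i.e.\ to a pointed map obeying the projective identity \eqref{almost_gp_hom_first} with cocycle $e^{i\frac{2\pi}{r}\alpha}$, where $[\alpha]=d=c_1(E)$. Feeding in that $A$ is Galois-invariant, Proposition \ref{par_transport_inv_conn} gives $T^A_{\si(\ga)}=\tau\,T^A_{\ga}\,\tau^{-1}$, so by Theorem \ref{extended_holonomy} the full extended holonomy descends modulo $S^1$ to a genuine homomorphism $\overline{\chi}:\piRx\lra\PU(r)\rtimes\Si$ of $\Si$-augmentations, equivalently a pointed lift $\overline{\chi}:\piRx\lra\U(r)\times_c\Si$ compatible with the projections to $\Si$ and satisfying \eqref{almost_gp_hom_secalmost_gp_second} for a $2$-cocycle $\at$ extending $\alpha$.

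Next comes the central lift, which I expect to be the main obstacle. The sequence $1\to S^1\to\U(r)\times_c\Si\to\PU(r)\rtimes\Si\to 1$ is central, so the projective representation $\overline{\chi}$ with cocycle $e^{i\frac{2\pi}{r}\at}$ is exactly the datum of a genuine homomorphism out of the $S^1$-central extension of $\piRx$ determined by that cocycle. The delicate bookkeeping is to match extension classes: by Kahn's classification and Diagram \eqref{diagram_of_ext_and_aug}, $\at$ extends $\alpha$ precisely when it represents a smooth Real line bundle of degree $d$, and then the pushforward of $\piLR$ along $n\mapsto e^{i\frac{2\pi}{r}n}$ realises exactly this $S^1$-extension. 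This yields the homomorphism of $\Si$-augmentations $\chi_A:\piLR\lra\U(r)\times_c\Si$ with $\chi_A(n)=\exp(i\frac{2\pi}{r}n)I_r$ on the central $\Z$, as in \eqref{morphisms_of_extensions} and \eqref{cond_on_morphisms}; here I would invoke Remark \ref{ext_class_vs_aug_class} to see that the resulting $\Si$-augmentation is insensitive to which $(L,\tauL)$ of degree $d$ was chosen, and Remark \ref{generalized_orbifold_rep_space} to identify $\chi_A$ with a representation of $\piRx$ itself in the case $d=0$. The subtlety is that the projective-to-linear passage must respect the \emph{entire} $\Si$-augmentation, not merely the $\piCx$-holonomy, and must land in $\U(r)\times_c\Si$ with the prescribed restriction to $\Z$.

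Finally, for the conjugacy statement I would vary the remaining choices. Changing the unitary frame of $E_x$ by $w\in\U(r)$ conjugates every $T^A_{\ga}$, hence $\widetilde{\chi}$ and therefore $\chi_A$, by $(w,1_\Si)$; a direct check against the group law \eqref{gp_law_on_our_aug} shows this replaces $\chi_A$ by $\Int_w\circ\chi_A$, using $(w,1_\Si)(g,\si)(w,1_\Si)^{-1}=(wg\,\si(w)^{-1},\si)$. Replacing $A$ by $u\cdot A$ for $u\in\cG_E^{\,\tau}$ transforms parallel transport by $T^{u\cdot A}_{\ga}=u(\ga(1))\,T^A_{\ga}\,u(\ga(0))^{-1}$; evaluating at the base point and using that $u$ commutes with $\tau$ (which is exactly what keeps the change compatible with the augmentation to $\Si$, via $\tau\,u(\si(x))=u(x)\,\tau$) gives $\chi_{u\cdot A}=\Int_{u(x)}\circ\chi_A$ with $u(x)\in\U(r)$. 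Both operations are instances of the $\U(r)$-action of Definition \ref{orbifold_rep_of_extensions}, so the $\U(r)$-conjugacy class of $\chi_A$ is unchanged and depends only on the $\tau$-unitary gauge orbit of $A$, as claimed.
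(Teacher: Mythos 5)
Your proposal is correct and follows essentially the same route as the paper: projective flatness plus Galois-invariance yields a projective representation $\overline{\chi}:\piRx\to\PU(r)\rtimes\Si$ with central cocycle $e^{i\frac{2\pi}{r}\at}$, and the centrality of $1\to S^1\to\U(r)\times_c\Si\to\PU(r)\rtimes\Si\to 1$ together with the identification of $\piLR$ as the extension classified by $\at$ lifts this to the desired homomorphism of $\Si$-augmentations satisfying \eqref{cond_on_morphisms}. Your explicit verification of the conjugacy-invariance under change of frame and under the $\cG_E^{\,\tau}$-action (via $\tau\,u(\si(x))=u(x)\,\tau$) is a correct spelling-out of a point the paper leaves implicit.
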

\noindent In view of \eqref{GIT_pic_Real_quat_case}, Theorem \ref{holonomy_rep} means that we have constructed a holonomy map 
\begin{equation}\label{from_bundles_to_reps}
\mathrm{Hol}: \begin{array}{ccc} \cM^{ss}(E,\tau) & \lra & \cR_c(r,d)\\
 A & \lmt & \chi_A
\end{array}
\end{equation} where $c=\pm 1$ depending on whether $\tau$ is Real or Quaternionic and $\cR_c(r,d)$ is the representation space of $\piLR$ introduced in \eqref{our_rep_space}. We will see shortly that the collection of such holonomy maps for all possible topological types of Real or Quaternionic structures $\tau$ on $E$ provides an inverse to the maps in \eqref{map_from_reps_to_bundles}. But before we do that, let us study the special case where $M^\si\neq\emptyset$. As we saw in Proposition \ref{equivariant_rep}, if we choose $x\in M^\si$, then orbifold representations may be replaced by $\Si$-equivariant unitary representations of the usual fundamental group $\piC$, or more generally $\piL$. And we verified in \eqref{from_equiv_rep_to_bdles} that it was indeed easy, starting from a $\Si$-equivariant representation, to define a Real or Quaternionic structure on the associated projectively flat bundle. Conversely, we can check here that the (non-extended) holonomy representation $\rho_A:\piL \lra \U(r)$ of a Galois-invariant unitary connection $A$ on $E$ is indeed $\Si$-equivariant (for the action of $\Si$ on $\U(r)$ defined by $\si(u)=\ov{u}$ in the Real case and the action of $\Si$ on $\U(2r')$ defined by $\si(u)=J\ov{u}J^{-1}$ in the Quaternionic case). 
\begin{proposition}\label{equivariant_rep_from_invariant_conn}
Let $x\in M^\si$. Then, given a projectively flat connection $A$ on $E$, one has $\rho_{\beta(A)}=\si \rho_A \si^{-1}$. In particular, if $\beta(A)=A$, then $\rho_A$ is $\Si$-equivariant.
\end{proposition}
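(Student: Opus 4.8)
The plan is to prove an operator identity for parallel transport first and then read off the statement by passing to matrices in a frame of $E_x$ adapted to $\tau$.

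First I would record the key parallel-transport identity, which is nothing but the computation in the proof of Proposition \ref{par_transport_inv_conn} carried out \emph{without} the hypothesis $\beta(A)=A$. Combining Lemma \ref{parallel_transport_and_pullback_conn}, the gauge-transformation formula $T^{\beta(A)}_\ga = \phi_\si\, T^{\si(A)}_\ga\, \phi_\si^{-1}$ for the pulled-back connection $\beta(A)=(\phi_\si^{-1})^*\si(A)$ of \eqref{the_involution}, and the identity $\tau=\phi_\si\si_E$ from \eqref{real_or_quat_structure_on_self_conjugate_bundles} (together with $\si^2=\Id_M$), I obtain, for every path $\ga$,
\begin{equation*}
T^{\beta(A)}_\ga \;=\; \tau\, T^A_{\si(\ga)}\, \tau^{-1}.
\end{equation*}
Proposition \ref{par_transport_inv_conn} is exactly the specialization of this identity to a Galois-invariant $A$.

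Next I would pass to matrices. Since $x\in M^\si$ we have $\si(x)=x$, so $\tau$ restricts to a $\C$-antilinear isometry of $E_x$ squaring to $c\,\Id_{E_x}$. In the Real case this is a real structure, and I would pick a unitary frame $E_x\simeq\C^r$ in which $\tau$ becomes $v\mapsto\ov{v}$; in the Quaternionic case ($r=2r'$) I would pick a frame in which $\tau$ becomes $v\mapsto J\ov{v}$. In such a frame, conjugating a $\C$-linear operator $M$ of $E_x$ by the antilinear $\tau$ yields the conjugate matrix: $\tau M\tau^{-1}=\ov{M}=\si_\R(M)$ in the Real case and $\tau M\tau^{-1}=J\ov{M}J^{-1}=\si_\H(M)$ in the Quaternionic case. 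Writing $\rho_A(\ga)$ for the matrix of $T^A_\ga$ in this frame, the operator identity becomes $\rho_{\beta(A)}(\ga)=\si\big(\rho_A(\si(\ga))\big)$, that is, $\rho_{\beta(A)}=\si\rho_A\si^{-1}$ for the action $\rho\mapsto\si\circ\rho\circ\si^{-1}$ described after Definition \ref{orbifold_rep}. Taking $\beta(A)=A$ then gives $\rho_A=\si\rho_A\si^{-1}$, which is precisely the $\Si$-equivariance $\rho_A(\si(\ga))=\si(\rho_A(\ga))$.

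Two points deserve care. The identity above is exact at the level of operators on $E_x$, so it descends at once to the genuine representation $\rho_A:\piL\lra\U(r)$ attached to the projectively flat connection $A$ in Section \ref{parallel_transport}; one need only check that $\si_\R$ and $\si_\H$ are compatible with the normalization $\rho_A(n)=\exp(i\frac{2\pi}{r}n)I_r$ on the central subgroup $\Z\subset\piL$, which holds because $\si$ sends $\exp(i\theta)I_r$ to $\exp(-i\theta)I_r$ while the Galois action sends $n\in\Z$ to $-n$, so the two sign changes cancel. The genuinely delicate step is therefore the antilinearity bookkeeping: I must verify that conjugation by the \emph{antilinear} map $\tau$ produces the conjugate matrix (so that $\si_\R$, resp.\ $\si_\H$, really appears) and that the chosen frame can be taken simultaneously unitary and $\tau$-adapted, which is where the fact that $\tau$ is an isometry is used.
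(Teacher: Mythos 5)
Your proposal is correct and follows essentially the same route as the paper: the paper's own proof consists precisely of re-running the computation of Proposition \ref{par_transport_inv_conn} without the hypothesis $\beta(A)=A$ (yielding your identity $T^{\beta(A)}_\ga=\tau T^A_{\si(\ga)}\tau^{-1}$) and then translating into matrix form via a unitary frame of $E_x$ adapted to $\tau$, whose existence the paper likewise only asserts (by induction on $r$, resp.\ $r'$). Your additional checks — that conjugation by the antilinear $\tau$ produces $\si_\R$ or $\si_\H$, and that the central normalization $\rho_A(n)=\exp(i\frac{2\pi}{r}n)I_r$ is preserved because the sign changes on $S^1$ and on $\Z$ cancel — are exactly the bookkeeping the paper leaves implicit.
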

\begin{proof} 
The result is proved as in Proposition \ref{par_transport_inv_conn}. To translate it into matrix form, the frame $E_x\simeq \C^r$ must send the Real (resp.\ Quaternionic) structure  $\tau|_{E_x}$ to the canonical Real (resp.\ Quaternionic) structure $v\lmt\ov{v}$ (resp.\ $v\lmt J\ov{v}$) of $\C^r$ (resp.\ $\C^{2r'}$). The existence of such frames can be proved by induction on $r$ (resp.\ $r'$).
\end{proof}

\noindent In particular, it is not necessary, when $M^\si\neq\emptyset$, to use Theorem \ref{extended_holonomy} in order to define the holonomy map \eqref{from_bundles_to_reps}. It suffices to compose the map $A\lmt\rho_A$ from Proposition \ref{equivariant_rep_from_invariant_conn} with the isomorphism in Proposition \ref{equivariant_rep}:
\begin{equation*}
\mathrm{Hol}: \begin{array}{ccc}
\cM^{ss}(E,\tau) & \lra & \Hom^\Z(\piL;\U(r))^\Si/\U(r)^\Si \simeq \cR_c(r,d) \\
A & \lmt & \rho_A
\end{array}
\end{equation*} where $d=\deg(L)$ and $c=\pm 1$ depending on whether $\tau$ is Real or Quaternionic.

\subsection{Connected components of the representation variety}\label{CC}

The moduli space $\cM^{ss}(E,\tau)$ constructed in \eqref{GIT_pic_Real_quat_case} depends only on the topological type of $(E,\tau)$ and, by Theorem \ref{top_classif}, the latter is entirely determined by the numerical quantities $c=\tau^2$, $r=\rk(E)$, $d=\deg(E)$ and (when $c=+1$ and $M^\si\neq\emptyset$) $w=w_1(E^\tau)$. Let us then set 
\begin{equation}\label{future_CC}
\cM_c(r,d,\vw) := \left\{ 
\begin{array}{ll}
\cM_\R(r,d) & \mathrm{if}\ c=+1\ \mathrm{and}\ M^\si=\emptyset,\\
\cM_\R(r,d,\vw) & \mathrm{if}\ c=+1\ \mathrm{and}\ M^\si\neq\emptyset,\\
\cM_\H(r,d) & \mathrm{if}\ c=-1,
\end{array}
\right.
\end{equation} where the moduli spaces on the right-hand side are the ones introduced in Definition \ref{def_moduli_spaces_Real_Quat_bdles}. By the results of \cite{BHH} and \cite{Sch_JSG}, the sets $\cM_c(r,d,\vw)$ are non-empty connected topological spaces which can be embedded into $\ModCd^\Si$, possibly with some non-trivial intersection unless we restrict to the stable locus of $\ModCd$. In \eqref{from_bundles_to_reps}, we defined a map $\Hol:\cM_c(r,d,\vw) \lra \cR_c(r,d)$ and, in order to complete the proof of Theorem \ref{NS_over_R}, we now want to specify the image of that map (see Theorem \ref{NS_over_R_fixed_top_type}). To that end, let us introduce new sets $\cR_c(r,d,\vw)$. When $c=-1$  or $M^\si=\emptyset$, $\cR_c(r,d,\vw)$ will coincide with $\cR_c(r,d)$ by definition. But when $c=+1$ and $M^\si\neq\emptyset$, $\cR_c(r,d,\vw)$ will be the subset of $\cR_c(r,d)$ defined as follows. Let us denote by $\ga_1,\ldots,\ga_n$ the connected components of $M^\si$. We view $\ga_k$ as a loop in $M$ by picking a base point $x_k\in \ga_k$. Recall from Proposition \ref{equivariant_rep} that, if we choose a base point $x\in M^\si$, the representation variety $\cR_c(r,d)$ can be seen as the set of equivalence classes of $\Si$-equivariant representations $\rho:\piL\lra\U(r)$ satisfying, for all $n\in\Z\subset\piL$, $\rho(n)=\exp(i\frac{2\pi}{r}n)I_r$. In particular $\det\rho(n)=1$ for all $n\in\Z$. Let us now pick a path $\delta_k$ from $x$ to $x_k$. Then we have a loop $\eta_k:=\delta_k^{-1} \ga_k \delta_k$, based at $x$. As in Section \ref{case_with_real_points}, the Real Seifert fibration $S^1\lra S(L) \lra M$ induces a short exact sequence of $\Si$-equivariant group homomorphisms $$0\lra\Z\lra\pi_1(S(L);\ov{x})\lra\piCx\lra 1,$$ where $\ov{x}\in\Fix(\tauL)$ lies in the fiber of $S(L)$ above $x$ and $\Si$ acts on $\Z$ by $n\lmt (-n)$. So we can lift $\eta_k$ to an element $\alpha_k\in\piL$. Morever, $\si(\eta_k) = \zeta_k^{-1}\eta_k\zeta_k$, where $\zeta_k:=\si(\delta_k^{-1})\delta_k$ is a loop at $x$, so we can also lift $\zeta_k$ to some $\beta_k\in\piL$ and we see that $\si(\alpha_k)$ and $\beta_k^{-1}\alpha_k\beta_k$ differ only by some $n_k\in\Z$. In particular, $\det\rho(\alpha_k)=\det\rho(\si(\alpha_k))=\si(\det\rho(\alpha_k))\in\U(1)^\Si=\O(1)$ and is independent of the various choices of liftings that we have made. This implies the existence of a map
\begin{equation}\label{real_obstruction_map}
\cW: 
\begin{array}{ccc}
\cR_c(r,d) & \lra & \O(1)^n\\
\rho & \lmt & \big(\det\rho(\alpha_1),\ldots,\det\rho(\alpha_n)\big)
\end{array}
\end{equation} that, in analogy with \cite{Goldman_top_comp}, we may call the Real obstruction map.

\begin{definition}\label{sub_repvar}
Assume that $c=+1$ and $M^\si\neq\emptyset$. Given $w=(s_1,\ldots,s_n)\in\O(1)^n$, we set $$\cR_c(r,d,\vw):=\cW^{-1}(s_1,\ldots,s_n)$$ where $\cW$ is the Real obstruction map \eqref{real_obstruction_map}. As we will see below, this set is non-empty if and only if $s_1\ldots s_n=(-1)^{d\,\mod\,2}$. If $c=-1$ or $M^\si=\emptyset$, we set $\cR_c(r,d,\vw):=\cR_c(r,d)$. In all cases we have: $$\cR_c(r,d)=\bigsqcup_{w} \cR_c(r,d,\vw).$$
\end{definition}

\noindent If $c=+1$ and $M^\si\neq\emptyset$, then the Real vector bundle $(\cE,\tau)$ associated to $\rho$ by means of Theorem \ref{constr_of_bdles_prop} satisfies $w_1(\cE^\tau)=w$, by definition of the first Stiefel-Whitney class. Conversely, if $A$ is a $\Si$-invariant, projectively flat unitary connection on a Real Hermitian vector bundle $(E,\tau)$ of topological type $(r,d,\vw)$, then it follows from Theorem \ref{holonomy_rep} that taking the holonomy of $A$ defines a representation $\rho\in\cR_c(r,d,\vw)$. We have thus proved the following result (the Narasimhan-Seshadri correspondence for Real and Quaternionic vector bundles of fixed topological type).

\begin{theorem}\label{NS_over_R_fixed_top_type}
For any $c=\pm 1$ and any topological type $(r,d,\vw)$ of Real or Quaternionic vector bundle, the holonomy map sets up a homeomorphism $$\mathrm{Hol}:\cM_c(r,d,\vw) \overset{\simeq}{\lra} \cR_c(r,d,\vw)$$ where $\cM_c(r,d,\vw)$ is the moduli space defined in \eqref{future_CC} and $\cR_c(r,d,\vw)$ is the representation space introduced in Definition \ref{sub_repvar}.
\end{theorem}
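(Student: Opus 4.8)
The plan is to exhibit the Narasimhan--Seshadri construction of Theorem \ref{constr_of_bdles_prop} as a two-sided inverse of $\Hol$, and then to upgrade the resulting set-theoretic bijection to a homeomorphism by a soft compactness argument. Write $\mathrm{NS}:\cR_c(r,d,\vw)\lra\cM_c(r,d,\vw)$ for the map $\chi\lmt[(\cE_\rho,\tausi)]$ of \eqref{map_from_reps_to_bundles}: it is well defined and lands in $\cM_c(r,d,\vw)$ by the discussion preceding the theorem, which identifies $w_1(\cE^\tau)$ with the value of the Real obstruction map $\cW$. Dually, $\Hol$ is the holonomy map \eqref{from_bundles_to_reps}, sending the class of a semi-stable $\tau$-compatible connection to the $\U(r)$-conjugacy class of the extended holonomy (Theorem \ref{extended_holonomy}) of its Galois-invariant, projectively flat representative; by Theorem \ref{main_step_towards_NS} such a representative exists and is unique up to $\cG_E^{\,\tau}$, so $\Hol$ is well defined.

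First I would check that $\Hol\circ\mathrm{NS}=\mathrm{Id}$. Given $\chi$ with restriction $\rho$, the bundle $\cE_\rho=\Mt\times_\rho\C^r$ carries the projectively flat connection descended from the trivial connection on $\Mt\times\C^r$, whose $\piL$-holonomy is $\rho$ by the classical computation (the $\piL$-action being by the constant, hence parallel, matrices $\rho(\ga)$). Forming the extended holonomy in the sense of Theorem \ref{extended_holonomy} adjoins to $\rho$ the operator attached to a path lifting $\sit$; since the Real or Quaternionic structure $\tausi$ was built in \eqref{def_of_real_structure} precisely from $\chi(\sit)=(\usit,\si)$, this operator recovers the value $\chi(\sit)=(\usit,\si)$, so the extended holonomy representation equals $\chi$ up to conjugation.

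The substance of the argument is the reverse composition $\mathrm{NS}\circ\Hol=\mathrm{Id}$, and this is the step I expect to be the main obstacle. Let $(E,\tau)$ be poly-stable and let $A$ be its Galois-invariant, projectively flat representative. The classical Narasimhan--Seshadri theorem already supplies a $\C$-linear isomorphism between the flat bundle $\cE_{\rho_A}$ reconstructed from the complex holonomy $\rho_A$ and the holomorphic bundle $(E,\ov{\partial}_A)$. What must be added is that this isomorphism intertwines the structure $\tausi$ produced in Theorem \ref{constr_of_bdles_prop} from the extended datum $\chi_A(\sit)$ with the original $\tau$. This is exactly where Proposition \ref{par_transport_inv_conn} enters: for a Galois-invariant connection the parallel transports satisfy $T^A_{\si(\ga)}=\tau\,T^A_{\ga}\,\tau^{-1}$, so the operator attached to the $\sit$-lift in the extended holonomy is transport followed by $\tau$, which matches the recipe by which \eqref{def_of_real_structure} defines $\tausi$; tracking this through the reconstruction identifies $(\cE_{\rho_A},\tausi)$ with $(E,\tau)$ as Real (resp.\ Quaternionic) bundles. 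Together with the two preceding paragraphs this shows $\mathrm{NS}$ and $\Hol$ are mutually inverse bijections.

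It remains to see the bijection is bicontinuous. The map $\mathrm{NS}$ is continuous: composing with the topological embedding $\iota:\cM_c(r,d,\vw)\hookrightarrow\ModCd$ of Definition \ref{define_a_topology} and with the restriction $\chi\lmt\chi|_{\piL}$ places $\mathrm{NS}$ in a commutative square over the complex Narasimhan--Seshadri homeomorphism $\cR(r,d)\simeq\ModCd$, so $\iota\circ\mathrm{NS}$ is continuous and, $\iota$ being an embedding, $\mathrm{NS}$ is continuous. Finally the source $\cR_c(r,d,\vw)$ is compact: the space $\Hom^{\Z}_{\Si}(\piLR;\U(r)\times_c\Si)/\U(r)$ is a space of homomorphisms from a finitely generated group into the compact group $\U(r)\times_c\Si$, modulo the compact group $\U(r)$, hence compact, and $\cR_c(r,d,\vw)$ is either all of it or a clopen fibre of the continuous map $\cW$. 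Since the target $\cM_c(r,d,\vw)\subset\ModCd$ is Hausdorff, a continuous bijection from a compact space to a Hausdorff space is a homeomorphism; thus $\mathrm{NS}$ is a homeomorphism and $\Hol=\mathrm{NS}^{-1}$ is the asserted homeomorphism.
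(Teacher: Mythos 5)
Your proposal is correct and follows essentially the same route as the paper: the construction of Theorem \ref{constr_of_bdles_prop} and the extended holonomy of Theorems \ref{extended_holonomy} and \ref{holonomy_rep} are exhibited as mutual inverses (using Theorem \ref{main_step_towards_NS} for the choice of a Galois-invariant projectively flat representative), and the Real obstruction map $\cW$ matches the topological type $\vw$ on both sides. Your closing step --- continuity of the reconstruction map plus compactness of $\cR_c(r,d,\vw)$ and Hausdorffness of the target --- makes explicit the bicontinuity that the paper leaves implicit, and is a sound way to finish.
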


\noindent Using the results of \cite{BHH} and \cite{Sch_JSG}, we then have the following corollary.

\begin{corollary}\label{CC_of_rep_var}
Assume that $c=+1$ and $M^\si$ has $n>0$ connected components, then the connected components of the representation space $\cR_c(r,d)$ introduced in Definition \ref{orbifold_rep_of_extensions} are the $2^{n-1}$ subset $\cR(r,d,\vw)$, where $w=(s_1,\ldots,s_n)\in (\Z/2\Z)^n\simeq\O(1)^n$ satisfies $s_1+\ldots+s_n=d\,\mod\,2$.
\end{corollary}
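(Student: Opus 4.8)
The plan is to assemble three ingredients, all of which are already available by this point in the paper. First, Theorem \ref{NS_over_R_fixed_top_type} provides, for each topological type, a homeomorphism $\mathrm{Hol}:\cM_c(r,d,\vw)\overset{\simeq}{\lra}\cR_c(r,d,\vw)$. Second, the results of \cite{BHH} and \cite{Sch_JSG} assert that each moduli space $\cM_c(r,d,\vw)$ is a non-empty connected topological space, precisely when a Real Hermitian bundle of that type exists, i.e.\ (by Theorem \ref{top_classif}) when $|\vw|=d\,\mod\,2$. Third, the target $\O(1)^n=\{\pm1\}^n$ of the Real obstruction map $\cW$ of \eqref{real_obstruction_map} is a discrete space.

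First I would transport connectedness across the homeomorphism: for $\vw=(s_1,\ldots,s_n)$ with $s_1+\ldots+s_n=d\,\mod\,2$, the space $\cM_c(r,d,\vw)$ is non-empty and connected, hence so is $\cR_c(r,d,\vw)$ by Theorem \ref{NS_over_R_fixed_top_type}; and for $\vw$ violating the parity constraint, both sides are empty. This yields exactly $2^{n-1}$ non-empty connected pieces, one for each tuple in $(\Z/2\Z)^n$ whose coordinate sum has the prescribed parity.

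It then remains to upgrade ``connected subset'' to ``connected component''. For this I would invoke the continuity of $\cW$ together with the discreteness of $\O(1)^n$. The map $\rho\mapsto(\det\rho(\alpha_1),\ldots,\det\rho(\alpha_n))$ is built from evaluation at fixed elements $\alpha_k\in\piL$ and the conjugation-invariant determinant, so it is continuous and descends to $\cR_c(r,d)$, as was checked when \eqref{real_obstruction_map} was set up. Since the target is discrete, each fiber $\cR_c(r,d,\vw)=\cW^{-1}(\vw)$ is clopen in $\cR_c(r,d)$; a clopen connected set is a connected component, so each non-empty $\cR_c(r,d,\vw)$ is exactly one component. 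The decomposition $\cR_c(r,d)=\bigsqcup_{\vw}\cR_c(r,d,\vw)$ of Definition \ref{sub_repvar} then displays all of them.

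No genuinely new argument is needed here; the substantive inputs---the homeomorphism of Theorem \ref{NS_over_R_fixed_top_type} and the connectedness of the fixed-type moduli spaces from \cite{BHH,Sch_JSG}---are the deep results proved earlier and are simply invoked. The only points requiring care are bookkeeping: that $\cW$ is well-defined and continuous on the quotient $\cR_c(r,d)$, and the elementary count that exactly $2^{n-1}$ of the $2^n$ tuples in $(\Z/2\Z)^n$ satisfy $s_1+\ldots+s_n=d\,\mod\,2$. Accordingly, the main obstacle is not in this corollary at all but in the theorems it rests upon.
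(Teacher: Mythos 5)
Your proposal is correct and follows essentially the same route as the paper, which derives the corollary directly from the homeomorphisms of Theorem \ref{NS_over_R_fixed_top_type} together with the non-emptiness and connectedness of the fixed-type moduli spaces $\cM_c(r,d,\vw)$ established in \cite{BHH} and \cite{Sch_JSG}. Your explicit remarks on the continuity of $\cW$ into the discrete space $\O(1)^n$ and the resulting clopenness of the fibers simply spell out what the paper leaves implicit.
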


\noindent Corollary \ref{CC_of_rep_var} fits well in the theory of representation varieties of fundamental groups, where, following the path set by Goldman in \cite{Goldman_top_comp}, connected components of representation spaces are often distinguished by the topological invariants of the bundles associated to those representations, notably the characteristic classes of such bundles. We also note that, although the connected components of $\cR_c(r,d)$ are known for all $c$, $r$ and $d$, the same is not true in general for the $\Si$-invariant part $\cR(r,d)^\Si$ of the usual representation space $\cR(r,d)$, whose definition was recalled in Remark \ref{usual_rep_var}, unless $r$ and $d$ are coprime or one restricts to the locus consisting of irreducible representations, for which we can use the vector bundle version of the statement, proved in \cite{Sch_JSG}.


\end{document}